\documentclass[11pt]{article}
\usepackage[T1]{fontenc}
\usepackage{lmodern,microtype,amsmath,amssymb,amsthm,mathtools}
\usepackage[margin=1in]{geometry}
\usepackage{booktabs,tikz,graphicx}
\usetikzlibrary{arrows.meta}
\definecolor{paperInk}{HTML}{233448}
\definecolor{paperMuted}{HTML}{637183}
\definecolor{paperTeal}{HTML}{007F86}
\definecolor{paperTealLight}{HTML}{E6F3F1}
\definecolor{paperGold}{HTML}{B87821}
\definecolor{paperGoldLight}{HTML}{FFF0D8}
\definecolor{paperCoral}{HTML}{C75249}
\definecolor{paperPurple}{HTML}{7564A2}
\definecolor{paperGrid}{HTML}{DFE5E9}
\definecolor{paperArrow}{HTML}{97A4AE}
\definecolor{paperDisc}{HTML}{F7DAD4}
\usepackage[colorlinks=true,linkcolor=blue,citecolor=blue,urlcolor=blue]{hyperref}
\hypersetup{pdftitle={Spectra of stochastic matrices with a Toeplitz interior},
  pdfauthor={Brecht Verbeken and Vincent Ginis}}
\newtheorem{theorem}{Theorem}[section]
\newtheorem{proposition}[theorem]{Proposition}
\newtheorem{lemma}[theorem]{Lemma}
\newtheorem{corollary}[theorem]{Corollary}
\theoremstyle{remark}

\newcommand{\Q}{\mathcal Q}
\newcommand{\Om}{\Omega}
\newcommand{\Sp}{\mathcal S}
\newcommand{\one}{\mathbf 1}
\newcommand{\conv}{\operatorname{conv}}
\newcommand{\spec}{\operatorname{spec}}
\newcommand{\rank}{\operatorname{rank}}
\newcommand{\tr}{\operatorname{tr}}
\newcommand{\diag}{\operatorname{diag}}
\newcommand{\ceil}[1]{\left\lceil#1\right\rceil}
\newcommand{\floor}[1]{\left\lfloor#1\right\rfloor}

\title{Spectra of stochastic matrices with a Toeplitz interior}
\author{%
  Brecht Verbeken\textsuperscript{1,2} and
  Vincent Ginis\textsuperscript{1,2,3}\\[0.8em]
  \parbox{0.96\textwidth}{\centering\small
    \textsuperscript{1}Department of Business Technology and Operations,\\
    Data Analytics Laboratory, Vrije Universiteit Brussel (VUB),\\
    Pleinlaan 2, 1050 Brussels, Belgium.\\[0.45em]
    \textsuperscript{2}imec-SMIT, Vrije Universiteit Brussel,\\
    Pleinlaan 9, 1050 Brussels, Belgium.\\[0.45em]
    \textsuperscript{3}School of Engineering and Applied Sciences,\\
    Harvard University, Cambridge, Massachusetts 02138, USA.\\[0.6em]
    \href{mailto:brecht.verbeken@vub.be}{brecht.verbeken@vub.be};
    \href{mailto:vincent.ginis@vub.be}{vincent.ginis@vub.be}
  }%
}
\date{}
\makeatletter
\renewcommand{\maketitle}{%
  \begin{center}
    {\LARGE\@title\par}\vspace{1em}
    {\large\@author\par}
  \end{center}\vspace{1em}}
\makeatother
\begin{document}
\maketitle

\begin{abstract}
For stochastic matrices of order $n$, the Karpelevi\v{c} region is the set of all possible eigenvalues. We study matrices whose first and last $m$ rows are arbitrary probability vectors, while the remaining middle rows come from one probability vector by successive cyclic shifts to the right, moving the last entry to the first position each time. As $m$ increases, their eigenvalue regions expand from the polygon spanned by the $n$th roots of unity. For $n\ge3$, they first equal the full Karpelevi\v{c} region at $m=\lceil(n-1)/4\rceil$. Reproducing all $n$ eigenvalues, counted with multiplicity, of every stochastic matrix of the same order requires $m=\lfloor n/2\rfloor$. Thus, in even order $n\ge4$, retaining two consecutive rows with the Toeplitz pattern can exclude eigenvalue lists even when every possible eigenvalue occurs individually. The boundary of the Karpelevi\v{c} region consists of arcs joining roots of unity. We determine the least $m$ for which every point of each arc occurs as an eigenvalue in our family. For arcs whose nonreal points first become possible eigenvalues at order $n\ge4$, and for $m\ge1$ below this minimum, no point on the arc other than its endpoints occurs in the family. At the minimum, every point on the arc becomes attainable. At the smallest order $n\ge4$ at which a nonreal point on a boundary arc away from its endpoints can occur as an eigenvalue, that point determines the characteristic polynomial of every stochastic matrix of order $n$ having it as an eigenvalue. Further results describe the smaller regions and the angles at which their boundaries approach roots of unity.
\end{abstract}

\noindent\textbf{Keywords.} Stochastic matrix; Toeplitz matrix; Karpelevi\v{c}
region; inverse eigenvalue problem; invariant polygon.

\section{Introduction}
\label{sec:introduction}

For a fixed order $n$, the Karpelevi\v{c} region is the set of all possible
eigenvalues of stochastic matrices. Requiring a Toeplitz pattern can reduce
this region. We relax this requirement by allowing the first and last $m$ rows
to be arbitrary probability vectors, while the rows between them retain the
Toeplitz pattern. As $m$ increases, the possible eigenvalues fill a larger
part of the Karpelevi\v{c} region. We ask when the two regions become equal.
Furthermore, we also ask when every stochastic matrix of order $n$ has a
matrix in this family with the same eigenvalues.

The Karpelevi\v{c} theorem describes which individual eigenvalues are possible
\cite{Karpelevic1951,Ito1997}. Asking which lists of eigenvalues can occur together is the
stochastic form of the nonnegative inverse eigenvalue problem
\cite{JohnsonMarijuanPaparellaPisonero2018}. We call the least values of $m$
needed for these two questions the saturation thresholds.

For a Markov chain, arrange the states in a fixed cyclic order and allow
transition probabilities to vary independently on $2m$ consecutive states.
Cutting through the middle of this block puts $m$ free rows at each end of
the matrix. For each jump length, the same transition probability is used at every interior state. Jump lengths are unrestricted, and arbitrary
permutations of the states need not preserve this pattern.

Throughout, stochastic means row-stochastic: $P\ge0$ and $P\one=\one$, where
$\one$ is the vector of ones. We use $J=\one\one^T/n$ for the matrix whose
entries are all $1/n$. The norm $\|\cdot\|_\infty$ is the maximum absolute row
sum. Label the states $0,\ldots,n-1$, and fix an integer $0\le m\le\lfloor
n/2\rfloor$. We call the $n-2m$ consecutive rows indexed $m,\ldots,n-m-1$ the
interior rows. The other $2m$ rows, namely the first and last $m$ rows, are
the boundary rows. Their sets of row indices are
\[
 I_{n,m}=\{m,\ldots,n-m-1\},\qquad
 B_{n,m}=\{0,\ldots,n-1\}\setminus I_{n,m}.
\]
Row $i$ records the transition probabilities from state $i$ to all
$n$ states. We call a state an \emph{interior state} when its index
lies in $I_{n,m}$ and a \emph{boundary state} when its index lies
in $B_{n,m}$.
The class $\Q_{n,m}$ consists of stochastic matrices whose interior rows
satisfy the Toeplitz condition across all $n$ columns:
\begin{equation}
 P_{ij}=a_{j-i}\qquad(i\in I_{n,m},\ 0\le j<n)
 \label{eq:toeplitz}
\end{equation}
for some real coefficients $a_k$ shared by all interior rows. The boundary
rows, indexed by $B_{n,m}$, are arbitrary probability vectors.
Proposition~\ref{prop:completion} shows that the interior rows coincide with
the corresponding rows of a stochastic circulant. Entries far from the main
diagonal may be nonzero.
Throughout, cyclic shifts of row vectors are to the right: a shift
by one position moves the last entry to the first position.
Successive interior rows differ by one such shift.
Figure~\ref{fig:cyclic-rows} shows these constraints and the cyclic ordering
for $n=10$ and $m=2$.

\begin{figure}[htbp]
\centering
\includegraphics[width=\textwidth]{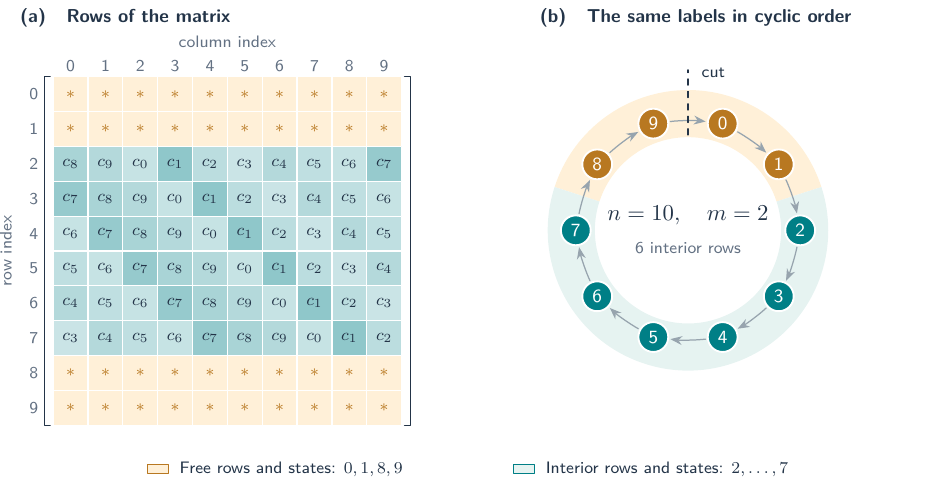}
\caption{The row constraints for $n=10$ and $m=2$. The six interior rows, indexed $2,\ldots,7$, are successive cyclic shifts of a probability vector $(c_0,\ldots,c_9)$. An asterisk denotes an arbitrary nonnegative entry, subject to each row summing to $1$. The four free states are consecutive in the cyclic order. Cutting between $9$ and $0$ puts two free rows at each end of the matrix. Arrows indicate the ordering of labels.}
\label{fig:cyclic-rows}
\end{figure}

Let $\Theta_n$ denote the Karpelevi\v{c} region. We write $\spec(P)$ for the
multiset of eigenvalues of $P$: repeated eigenvalues are counted with their
multiplicities, and their order is ignored. Let $\Sp_n$ be the set of these
multisets for all stochastic matrices of order $n$. For our class, define
\[
 \Om_{n,m}=\bigcup_{P\in\Q_{n,m}}\spec(P),\qquad
 \Sp_{n,m}=\{\spec(P):P\in\Q_{n,m}\}.
\]
The two saturation thresholds are the least values of $m$ needed to attain
every point of $\Theta_n$ and every multiset in $\Sp_n$, respectively:
\begin{equation}
 m_n^*=\min\{m:\Om_{n,m}=\Theta_n\},\qquad
 j_n^*=\min\{m:\Sp_{n,m}=\Sp_n\}.
 \label{eq:thresholds}
\end{equation}
The boundary of $\Theta_n$ consists of arcs joining roots of unity. If an
endpoint is written as $e^{2\pi i p/q}$ with $p/q$ in lowest terms, we call
$q$ its endpoint denominator; this is the order of that root of unity. For an
arc $A\subset\partial\Theta_n$, let $\kappa_n(A)$ be the least $m$ for which
every point of $A$, including both endpoints, is attained:
\[
 \kappa_n(A)=\min\{m:A\subseteq\Om_{n,m}\}.
\]

\begin{theorem}[Saturation thresholds]
\label{thm:main}
For $n=1,2$, one has $m_n^*=j_n^*=0$. For every $n\ge3$,
\begin{align}
 m_n^*&=\left\lceil\frac{n-1}{4}\right\rceil,
 \label{eq:individual-threshold}\\
 j_n^*&=\left\lfloor\frac n2\right\rfloor.
 \label{eq:joint-known}
\end{align}
If a Karpelevi\v{c} arc $A$ has endpoint denominators $q<s$, then
\begin{equation}
 \kappa_n(A)=
 \begin{cases}
 0,&q=1,\\[2pt]
 \left\lceil\dfrac12\left\lfloor n/q\right\rfloor\right\rceil,&q\ge2.
 \end{cases}
 \label{eq:arc-threshold}
\end{equation}
\end{theorem}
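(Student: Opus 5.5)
The plan has two halves: structural upper bounds on what $\Q_{n,m}$ can do, and explicit constructions showing those bounds are attained.

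\textbf{Structure of $\Q_{n,m}$.} By Proposition~\ref{prop:completion}, the interior rows of $P\in\Q_{n,m}$ are the rows of a stochastic circulant $C$ with first row $c$. So $P=C+E$, where $E$ is supported on the $2m$ boundary rows, has zero row sums, and satisfies $C+E\ge0$.

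\textbf{Upper bounds.} Let $\omega$ be a primitive $q$th root of unity and set $v_\omega=(\omega^j)_j$. The $v_\omega$ are exactly the common eigenvectors of all circulants.
\begin{itemize}
\item[(a)] If $\lambda$ is an eigenvalue of $P$ with eigenvector $x$, then $(C-\lambda)x$ is supported on the $2m$ boundary coordinates. These coordinates are cyclically consecutive.
\item[(b)] In the DFT basis, $C$ is diagonal with entries $\hat c(\omega)$, and $\hat c(\omega)$ lies in the $n$-gon spanned by the $n$th roots of unity. Using $\hat x(\omega)=\widehat{(C-\lambda)x}(\omega)/(\hat c(\omega)-\lambda)$, the problem reduces to a rank-$2m$ perturbation of a normal matrix whose eigenvalues lie in $\conv\{n\text{th roots}\}$.
\item[(c)] For $\lambda$ on a Karpelevi\v{c} arc joining $e^{2\pi ip/q}$ and $e^{2\pi ir/s}$, the extremal matrices (Ito's realizing matrices) force the cycle structure of the digraph. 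Any realizing matrix contains a $q$-cycle and an $s$-cycle, and its dominant mass pattern is essentially unique: $\lambda^s-\beta\lambda^{s-q}\cdots$ with $\lfloor n/q\rfloor$ blocks.
\item[(d)] The claimed lower bound on $\kappa_n(A)$ is $m\ge\lceil\lfloor n/q\rfloor/2\rfloor$. To prove it, I would show that each $q$-periodic "block" of the extremal chain needs at least one free state per two blocks. The Toeplitz interior forces the same jump distribution at each interior state, and this is incompatible with the non-uniform transitions needed at the junctions of the extremal cycle structure. Each free state can service a junction on either side of the cut, which explains the factor $1/2$.
\end{itemize}
Taking the worst arc, with $q=2$, gives $\lceil\lfloor n/2\rfloor/2\rceil=\lceil (n-1)/4\rceil$. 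This yields $m_n^*$, provided the interior of $\Theta_n$ is already covered once all arcs are covered. That follows from star-shapedness of $\Theta_n$ together with convexity arguments: scale $P\mapsto tP+(1-t)I$, which preserves $\Q_{n,m}$.

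\textbf{Constructions.} For $q=1$, the arcs from $1$ are attained by circulants ($m=0$), since Dmitriev--Dynkin-type matrices of the form $\alpha I+\beta S^k$ are circulant. For $q\ge2$, I would place the $\lfloor n/q\rfloor$ cyclic $q$-blocks of Ito's matrix along the cyclic order. The interior rows should all be the same shift $S^q$ perturbed uniformly. The $2m$ free states straddling the cut then carry the special transitions, and a cyclic relabeling (allowed inside one fixed cyclic order) puts them at the ends.

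\textbf{Joint threshold $j_n^*$.} For $m=\lfloor n/2\rfloor$, there are no interior rows when $n$ is even (so the claim is trivial), and one interior row when $n$ is odd. A single row can be made Toeplitz by permuting, since every stochastic matrix is permutation-similar to one whose middle row is any given probability vector. For the lower bound with $m=\lfloor n/2\rfloor-1$, there are at least two interior rows (exactly two when $n$ is even), and I would exhibit an excluded list. My candidate is the spectrum of the cyclic permutation on $n-1$ states plus a fixed point, or in odd order a list forced to be realized only by a matrix with two distinct rows that cannot be adjacent shifts. The argument is that the cycle structure is rigid up to permutation, and the two Toeplitz rows force equal supports shifted by one, which contradicts the rigidity.

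\textbf{Main obstacle.} I expect the hardest step to be the lower bound (d) for every arc. It requires the uniqueness of the realizing characteristic polynomial on arc points (a later result of the paper, which I would prove first) and then a careful counting of how many junction states cannot be Toeplitz.
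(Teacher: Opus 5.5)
\textbf{The proposal has genuine gaps in the arc lower bound (d), in the arc construction, and in the witness for the joint threshold.}

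Your treatment of the Type~0 arcs and your reduction of $m_n^*$ to the arcs agree with the paper. That reduction mixes with $I$ along the ray from $1$ to the farthest boundary point.

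\textbf{The lower bound in (d).} The rigidity you plan to prove first (Theorem~\ref{thm:boundary-polynomial}) holds only when $n$ is the \emph{least} order realising $\lambda$. Arcs of $\partial\Theta_n$ can be inherited from smaller orders. For $n=13$, the arc from $e^{2\pi i/3}$ to $e^{8\pi i/11}$ has least order $\max(qd,s)=12$ and $\kappa_{13}(A)=2$. An order-$13$ matrix with an eigenvalue on this arc can be an order-$12$ Ito matrix plus an arbitrary extra state. There is then no forced cycle pattern, no unique characteristic polynomial, and nothing to count. In any case, the ``one free state per two blocks'' picture is a heuristic, not an argument.

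The paper's lower bound does not use interior arc points at all. It lets arc eigenvalues of $P_j\in\Q_{n,m}$ tend to the order-$q$ endpoint $\zeta$ and passes by compactness to a limit $P_0\in\Q_{n,m}$. The limiting eigenvector polygon $K$ satisfies $e^{-te^{i\theta}}K\subseteq K$ for $t\ge0$ and has at most $qr$ vertices, where $r$ is the multiplicity of $\zeta$ in $P_0$. Lemma~\ref{lem:sector} then bounds the approach angle by $\pi/2-\pi/(qr)$. The arc approaches at exactly $\pi/2-\pi/(qd)$, so $r\ge d$.

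Your observation (b) enters only at this point. If $q\nmid n$, then $C-\zeta I$ is invertible and $x=-(C-\zeta I)^{-1}Ex$ on the eigenspace. Semisimplicity then gives $r\le\rank E\le2m$, and this is where the factor $\tfrac12$ comes from. If $q\mid n$, this argument fails because $\zeta$ is an eigenvalue of circulants. One then needs the separate local equality $\Om_{n,m}\cap U=H_n\cap U$ in a neighbourhood $U$ of $\zeta$ (Theorem~\ref{thm:divisor}), plus a second-order check that the arcs leave $H_n$. The case $n=2q$ instead uses the endpoint of denominator $s$, which is not in $H_n$.

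\textbf{The construction.} Under $S^q$, the orbits of $i\mapsto i+q$ on $\mathbb Z_{qd}$ are $q$ cycles of length $d$, not $d$ cycles of length $q$. Placing the $q$-blocks consecutively puts the $d$ closing rows $q$ apart, so they cannot all be boundary rows. The interior rows must instead be the shift by $d=\floor{n/q}$ on the interleaved chains $j,j+d,j+2d,\dots$. Then the $d$ chain ends are the consecutive labels $n-d,\dots,n-1$ and can be split across the cut.

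\textbf{The joint threshold.} The spectrum of $1\oplus C_{n-1}$ is not excluded. Make state $0$ absorbing and let $1\to2\to\cdots\to n-1\to1$ be a deterministic cycle. Every row indexed $1,\dots,n-2$ then follows $i\mapsto i+1$, so this matrix lies in $\Q_{n,1}\subseteq\Q_{n,\floor{n/2}-1}$ for all $n\ge4$. Your candidate therefore works only for $n=3$.

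For odd $n\ge5$ there are three interior rows, and the paper uses the rigid matrix $J_n$ (Theorem~\ref{thm:rigidity}). It is built from $3$-cycles with pairwise distinct loop probabilities. Three interior rows with equal diagonals would have to form one closed $3$-cycle invariant under a nonzero shift of $I_{n,m}$, which is impossible.

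For even $n$, only two interior rows remain, and ``equal supports shifted by one'' gives no contradiction for such witnesses. Two consecutive states of a $3$-cycle with constant loop probability can occupy the interior labels; the paper notes that $J_n$ itself relabels into $\Q_{n,n/2-1}$. The missing idea is to give up single-matrix rigidity. Prescribe traces so that $\tr P^4$ is minimal given $\tr P$ and $\tr P^2$, together with further conditions on $\tr P^k$. This forces every realisation into the $(q-1)$-parameter paired-ladder family of Lemma~\ref{lem:paired-ladder} ($n=2q$, $q$ odd). The coefficient image of that family has dimension $q-1$. Realisations with two equal row profiles map into a set of dimension at most $q-2$, so some spectra in the family are excluded. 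The case $n=4$ needs a separate explicit witness, and orders divisible by $4$ are obtained by adjoining $C_2$.
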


Thus the class attains the full Karpelevi\v{c} region while retaining the
Toeplitz pattern in about half of the rows. For $n\ge3$, attaining every
possible list of $n$ eigenvalues requires $m=\lfloor n/2\rfloor$, leaving one
interior row for odd $n$ and none for even $n$. In even order $n\ge4$,
retaining just two consecutive interior rows excludes some such lists. The
difference between the two minimum values of $m$ is
\[
 j_n^*-m_n^*=n/4+O(1).
\]
The three formulas in Theorem~\ref{thm:main} are proved in
Sections~\ref{sec:chains}--\ref{sec:joint}; they are independent of the
boundary-polynomial rigidity result of Section~\ref{sec:minimal-arcs}.

Johnson and Paparella \cite{JohnsonPaparella2017}, and Kirkland, Laffey and \v{S}migoc \cite{KirklandLaffeySmigoc2020}, construct stochastic matrices from Ito polynomials. The polynomials and their types are introduced in Section~\ref{sec:chains}. Kirkland and \v{S}migoc \cite{KirklandSmigoc2022} classify Type~0 and Type~I realisations and the sparsest Type~II and Type~III realisations. The subsequent classifications of non-degenerate full-degree Type~II polynomials \cite{VerbekenGinisTypeII2026} and genuine Type~III polynomials over their nonzero parameter range \cite{VerbekenGinisTypeIII2026} concern all stochastic matrices with the specified characteristic polynomial. Munger, Nickerson and Paparella \cite{MungerNickersonPaparella2024} study how selected roots trace the boundary arcs. Our chain construction places all rows departing from the circulant pattern in one consecutive block and gives the upper bounds on $m$. For the lower bounds, we show that smaller values of $m$ leave out boundary eigenvalues. When an endpoint root has order not dividing $n$, approaching it along its arc would require a limiting matrix in which the root has greater multiplicity than the $2m$ free rows permit. When the root order divides $n$, we instead show that, for these smaller values of $m$, the attainable region agrees near the root with the polygon spanned by the $n$th roots of unity. Nearby points on the adjacent Karpelevi\v{c} arcs lie outside this polygon, so those points cannot occur in our family.

For even $n\ge4$, we aim to find a stochastic matrix whose complete eigenvalue list cannot be reproduced by any matrix in $\mathcal Q_{n,n/2-1}$. This proves that retaining two consecutive Toeplitz rows is too restrictive to reproduce every complete stochastic spectrum. Showing that our chosen matrix lies outside the family would not suffice, since another matrix with the same eigenvalues might belong to it. We therefore use traces of powers, which depend only on the eigenvalues and constrain every stochastic matrix sharing that list. In the general even-order construction, we choose the eigenvalues so that $\operatorname{tr}(P^4)$ is as small as possible among nonnegative matrices with the given values of $\operatorname{tr}(P)$ and $\operatorname{tr}(P^2)$. Attaining this minimum forces the states into pairs with positive transition probabilities in both directions within each pair. Conditions on $\operatorname{tr}(P^k)$ for additional powers $k$ then restrict transitions between the pairs. To compare the possible spectra, we record each characteristic polynomial by the ordered list of its coefficients, omitting its leading coefficient $1$. The set of coefficient lists produced by our construction has greater dimension than the set that can occur when two rows have equal diagonal entries and the same multiset of entries. Some characteristic polynomials from our construction therefore cannot occur in our family. This differs from the companion constructions for nonnegative Toeplitz matrices in \cite{CollaoSalasSoto2019} and the obstructions for permutative matrices in \cite{Loewy2016}: here only two consecutive rows retain the Toeplitz pattern.

Section~\ref{sec:minimal-arcs} starts from one nonreal boundary eigenvalue
at its smallest realising order $n\ge4$. It proves that this eigenvalue
forces the entire characteristic polynomial, rather than assuming that
polynomial as in the Type~II and Type~III classifications. The proof adapts
the invariant-polygon reductions of \cite{VerbekenGinisPolygon2026} while
preserving the characteristic polynomial. The resulting cycle restrictions
exclude every point between the endpoints of a minimal-order arc below its
whole-arc saturation value.

Section~\ref{sec:regions} describes the radial geometry and the boundary
angles at roots of unity, extending the star-convexity studied in
\cite{VagenendeVerbekenGuerry2025}. The appendices give support bounds and
local freedom of characteristic-polynomial coefficients.

\begin{proposition}
\label{prop:completion}
If $n-2m\ge1$, every $P\in\Q_{n,m}$ agrees on rows $m,\ldots,n-m-1$
with a unique stochastic circulant matrix $C$. Conversely, replacing the
first $m$ and last $m$ rows of a stochastic circulant matrix by arbitrary
probability vectors produces a matrix in $\Q_{n,m}$. Thus
\begin{equation}
 P=C+E,\qquad E_{i,*}=0\ (i\in I_{n,m}),\qquad \rank E\leq2m,
 \label{eq:rank}
\end{equation}
where $E_{i,*}$ denotes row $i$ of $E$.
\end{proposition}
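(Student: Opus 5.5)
The plan is to read off the circulant directly from any interior row, using the Toeplitz condition \eqref{eq:toeplitz}. Since $n-2m\ge1$, the index set $I_{n,m}$ is nonempty, so fix some $i_0\in I_{n,m}$. The condition \eqref{eq:toeplitz} holds for $i=i_0$ and all $0\le j<n$. The differences $j-i_0$ then run through $n$ consecutive integers $-i_0,\ldots,n-1-i_0$, one from each residue class mod $n$.

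\textbf{Construction of $C$.} For each residue $r\in\{0,\ldots,n-1\}$ let $c_r=a_k$, where $k$ is the unique integer in $\{-i_0,\ldots,n-1-i_0\}$ with $k\equiv r\pmod n$. Set $C_{ij}=c_{(j-i)\bmod n}$. Row $i_0$ of $C$ equals row $i_0$ of $P$, and it is a probability vector. Every row of $C$ is a cyclic shift of it, so $C$ is a stochastic circulant.

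\textbf{Agreement on all interior rows.} This is the only real point to check, since for a different interior row $i$ the differences $j-i$ range over a different window $\{-i,\ldots,n-1-i\}$. The step is to show $a_k=a_{k+n}$ whenever both $k$ and $k+n$ occur as $j-i$ for interior indices $i$, so that all interior rows define the same residue function. I would argue through the shift relation. For consecutive interior rows $i,i+1$ and $1\le j\le n-1$, the Toeplitz condition gives $P_{i+1,j}=a_{j-i-1}=P_{i,j-1}$. Hence row $i+1$ agrees with the right cyclic shift of row $i$ in every entry except possibly column $0$. Both rows sum to $1$, so the column-$0$ entries agree as well: $P_{i+1,0}=P_{i,n-1}$. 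This forces $a_{-i-1}=a_{n-1-i}$, which is exactly the required periodicity. Inducting over consecutive interior rows, each interior row is the cyclic shift of the previous one, and therefore $P_{i,*}=C_{i,*}$ for all $i\in I_{n,m}$.

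\textbf{Uniqueness.} A circulant is determined by any one of its rows. Since $C$ must agree with $P$ on row $i_0$, it is unique.

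\textbf{Converse.} Let $C$ be a stochastic circulant with $C_{ij}=c_{(j-i)\bmod n}$, and replace its boundary rows by arbitrary probability vectors to obtain $P$. The result is stochastic. Define $a_k=c_{k\bmod n}$ for all integers $k$ in $[-(n-1),n-1]$. Then $P_{ij}=a_{j-i}$ on the interior rows, so \eqref{eq:toeplitz} holds and $P\in\Q_{n,m}$.

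\textbf{Rank bound.} Set $E=P-C$. Its interior rows vanish by the agreement step, so only its $|B_{n,m}|=2m$ boundary rows can be nonzero. The row rank of $E$ is therefore at most $2m$, which gives \eqref{eq:rank}.
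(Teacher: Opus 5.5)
Your proof is correct and follows essentially the same route as the paper. The Toeplitz identities show that row $i+1$ agrees with the right cyclic shift of row $i$ except in column $0$, equal row sums then give $P_{i+1,0}=P_{i,n-1}$, and the unique circulant $C$ is obtained by extending one interior row by cyclic shifts; your explicit construction of $C$ from row $i_0$ and your separate checks of the converse and the rank bound only spell out steps the paper states more briefly.
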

\begin{proof}
For consecutive interior rows, the Toeplitz identities give
$P_{i+1,j+1}=P_{ij}$ for $0\le j\le n-2$. Equality of their row sums then
gives $P_{i+1,0}=P_{i,n-1}$, so each row is obtained from the preceding
row by a cyclic shift one position to the right. Extending one interior row by cyclic shifts therefore gives the unique
stochastic circulant matrix $C$. Conversely, changing only boundary rows
preserves the interior identities. Since $E=P-C$ has at most $2m$ nonzero
rows, its rank is at most $2m$.
\end{proof}

If $n-2m=0$, there are no interior rows. We may then choose any stochastic
circulant matrix $C$ and put $E=P-C$; the decomposition and rank bound in
\eqref{eq:rank} still hold. In this case, $C$ is not uniquely determined by $P$.

For a probability vector $c=(c_0,\ldots,c_{n-1})$, write
\[
 C(c)_{ij}=c_{j-i\bmod n}\qquad(0\le i,j<n).
\]
In particular, $C_n$ is the permutation matrix with transition
probability $1$ from each state to its successor in the cyclic order:
$(C_n)_{i,i+1\bmod n}=1$.
Put $\omega=e^{2\pi i/n}$. The eigenvalues of $C(c)$ are
\[
 \lambda_r=\sum_{k=0}^{n-1}c_k\omega^{rk}\qquad(0\le r<n).
\]
Each $\lambda_r$ is a convex combination of $n$th roots of unity. Conversely, every
convex combination $\sum_k c_k\omega^k$ occurs as an eigenvalue of $C(c)$.
Together with Proposition~\ref{prop:completion}, this gives
\begin{equation}
 \Om_{n,0}=H_n:=\conv\{e^{2\pi i k/n}:0\le k<n\}.
 \label{eq:circulant-polygon}
\end{equation}
For $m=\floor{n/2}$, there is at most one interior row, so the Toeplitz
condition imposes no restriction and $\Q_{n,\floor{n/2}}$ contains every
stochastic matrix of order $n$. The classes $\Q_{n,m}$ are compact and
convex, and grow as $m$ increases. Their eigenvalue regions are compact.
For $n\ge2$ and $-1/(n-1)\le a\le1$, the matrix
$M_a=aI+(1-a)J$ is stochastic and circulant.
If $\spec(P)=\{1,\lambda_2,\ldots,\lambda_n\}$, the mixture
$tP+(1-t)M_a$ has eigenvalue $1$ and remaining eigenvalues
\begin{equation}
 \lambda_k(t)=t\lambda_k+(1-t)a
 \qquad(2\le k\le n,\ 0\le t\le1).
 \label{eq:averaging-spectrum}
\end{equation}
Indeed, use a basis whose first vector is $\one$. Since $P\one=\one$ and $J$ maps every vector into the span of $\one$, the matrices have block forms
\[
 P=\begin{pmatrix}1&*\\0&B\end{pmatrix},\qquad
 M_a=\begin{pmatrix}1&*\\0&aI_{n-1}\end{pmatrix}.
\]
Here $B$ has eigenvalues $\lambda_2,\ldots,\lambda_n$; the starred row blocks do not affect the eigenvalues. The mixture $tP+(1-t)M_a$ has lower-right block $tB+(1-t)aI_{n-1}$, whose eigenvalues are $t\lambda_k+(1-t)a$ for $2\le k\le n$. The mixture belongs to the same class by convexity.
The choices $a=0,1$ give mixing with $J$ and $I$, respectively.
Varying $a$ gives the stronger filling property
\begin{equation}
 \lambda\in\Om_{n,m}\quad\Longrightarrow\quad
 \conv\{-1/(n-1),1,\lambda\}\subseteq\Om_{n,m}.
 \label{eq:fill-from-one}
\end{equation}
For $\lambda=1$, the interval itself is attained by the matrices $M_a$.

\section{Boundary arcs and localised chain realisations}
\label{sec:chains}

The Karpelevi\v{c}--Ito theorem describes the boundary arcs using
neighbouring fractions in the Farey sequence of order $n$, the increasing
list of reduced fractions in $[0,1]$ with denominators at most $n$.
The endpoints of an arc can be written as $e^{2\pi ip/q}$ and
$e^{2\pi ip'/s}$, where the reduced fractions satisfy
\[
 q<s\le n,\qquad |ps-p'q|=1,\qquad q+s>n.
\]
For $q\ge2$, put $d=\floor{n/q}$ and $\beta=1-\alpha$, with
$0\le\alpha\le1$. The points on the arc are selected nonzero roots of
\begin{equation}
 \chi_{n;q,s,\alpha}(z)
 =z^{n-qd}(z^q-\beta)^d-\alpha^d z^{n-s}.
 \label{eq:ito}
\end{equation}
As $\alpha$ varies from $0$ to $1$, the theorem selects a continuous branch
of roots joining the two endpoints. Other roots of \eqref{eq:ito} need not
lie on the boundary. When $q=1$, the arcs are the two straight segments
from $1$ to $e^{2\pi i/n}$ and $e^{-2\pi i/n}$; we call these Type~0.

For $q\ge2$, put
\[
 N=\max(qd,s).
\]
For every nonreal point $\lambda$ between the endpoints, there exists
a stochastic matrix of order $N$ with eigenvalue $\lambda$, whereas no
stochastic matrix of order less than $N$ has that eigenvalue. Dividing \eqref{eq:ito} by $z^{n-N}$ removes
the additional zero roots and gives the reduced Ito polynomial of degree
$N$. We distinguish three cases:
\[
\begin{array}{c|c|c}
 \text{type}&\text{condition}&\text{least order for nonreal points }N\\ \hline
 \mathrm{I}&d=1&s\\
 \mathrm{II}&d\ge2,\ qd>s&qd\\
 \mathrm{III}&d\ge2,\ s>qd&s.
\end{array}
\]
Since $q$ and $s$ are coprime and $q\ge2$, equality $s=qd$ is impossible.
The order $n$ can exceed $N$; the same arc then appears on the boundaries
of both $\Theta_N$ and $\Theta_n$. The real segment $[-1,-1/2]$ on
the order-three boundary is exceptional: all its points already occur
as eigenvalues at order $2$.

The next construction allows the exponent $d$ in \eqref{eq:ito} to be
replaced by a positive integer $r$. It also allows the matrix order $n$
to increase with $q,r,s$ and $\alpha$ fixed. Each added state then
contributes an additional zero eigenvalue, while all previous eigenvalues
are retained.

\begin{theorem}
\label{thm:chains}
Let $q,r,s$ be integers with $q\ge2$, $r\ge1$, $qr\le n$ and
$r\le s\le n$. For every $\alpha\in[0,1]$, there exists a stochastic
matrix $P$ of order $n$ with characteristic polynomial
\begin{equation}
 z^{n-qr}(z^q-1+\alpha)^r-\alpha^r z^{n-s}.
 \label{eq:general-chain-poly}
\end{equation}
In every row outside a block of $r$ consecutive rows, the transition is
from state $i$ to state $i+r\pmod n$ with probability one. One can add the same
integer to every state label, modulo $n$, so that the matrix belongs to
$\Q_{n,\ceil{r/2}}$. It has at most $n+r$ positive entries.
\end{theorem}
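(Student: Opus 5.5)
The plan is to build $P$ explicitly as a weighted digraph and read off its characteristic polynomial from the cycle expansion. Recall that
\[
\det(zI-P)=\sum_{L}(-1)^{c(L)}\,w(L)\,z^{n-|L|},
\]
where $L$ runs over collections of vertex-disjoint directed cycles, $c(L)$ is the number of cycles, $|L|$ the number of vertices covered, and $w(L)$ the product of the edge weights. The target polynomial \eqref{eq:general-chain-poly} suggests a digraph whose only cycles are the following:
\begin{itemize}
\item $r$ pairwise disjoint $q$-cycles, each of weight $\beta=1-\alpha$;
\item one $s$-cycle of weight $\alpha^r$ that meets every one of those $q$-cycles.
\end{itemize}
Granting this, the disjoint collections of $q$-cycles give $\sum_k\binom rk(-\beta)^k z^{n-qk}=z^{n-qr}(z^q-\beta)^r$. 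The long cycle can only appear alone, contributing $-\alpha^r z^{n-s}$, and nothing else survives.

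For the construction, take a block of $r$ consecutive states $b_0,\dots,b_{r-1}$. Every other state $i$ has the single edge $i\to i+r \pmod n$ with weight $1$. Each block state $b_j$ gets two edges:
\begin{itemize}
\item a $\beta$-edge back to $b_j-(q-1)r$, closing a $q$-cycle $b_j-(q-1)r\to b_j-(q-2)r\to\cdots\to b_j\to b_j-(q-1)r$;
\item an $\alpha$-edge to a target chosen so that following $\alpha$-edges and unit shifts threads through all $r$ block states and closes after exactly $s$ steps.
\end{itemize}
Because the block states are consecutive, they lie in distinct residue classes modulo $r$. The hypothesis $qr\le n$ then guarantees that the $r$ backward $q$-cycles use $qr$ distinct states, are pairwise disjoint, and contain exactly one block state each. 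The hypothesis $s\ge r$ is what makes room for an $s$-cycle visiting all $r$ block states. One natural choice is to send the $\alpha$-edge of $b_j$ to $b_{j+1}$ shifted by a suitable multiple of $r$, with the last one adjusted to fix the total length at $s$. Every row sums to $\alpha+\beta=1$ or to $1$, so $P$ is stochastic. The number of positive entries is $(n-r)+2r=n+r$, or fewer when some edges coincide or $\alpha\in\{0,1\}$.

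The main obstacle is precisely this choice of $\alpha$-targets: one must check that no \emph{other} cycles arise. In particular, I must rule out cycles that use only some $\alpha$-edges, and cycles that follow the pure shift $i\mapsto i+r$ all the way around without entering a block state. The latter is automatic: the orbits of $i\mapsto i+r$ are the residue classes modulo $\gcd(n,r)$, and each orbit contains a block state, because the block covers $r$ consecutive residues. Leaving a block state is forced through a $\beta$- or $\alpha$-edge, so every cycle is a concatenation of such edges with shift paths. I would first try targets for which any cycle starting with an $\alpha$-edge is forced to continue through all block states, and verify the cycle list by following out-paths. If this fails for some $(q,r,s)$, the fallback is to compute $\det(zI-P)$ by expanding along the $r$ block rows, since every other row is a single unit entry.

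Finally, relabel by adding a constant modulo $n$ so that the block occupies the last $\lfloor r/2\rfloor$ and first $\lceil r/2\rceil$ states. Every remaining row is then $e_{i+r}$, which satisfies the Toeplitz condition \eqref{eq:toeplitz} with $a_r=1$ and all other $a_k=0$, the index taken modulo $n$ as in Proposition~\ref{prop:completion}. The rows outside $I_{n,\lceil r/2\rceil}$ include the whole block, so $P\in\Q_{n,\lceil r/2\rceil}$. Relabelling is a permutation similarity, so the characteristic polynomial is unchanged.
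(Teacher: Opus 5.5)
Your architecture is the paper's. Your block states are its chain ends $e_j$, and your $\beta$-edges go to the same targets $b_j-(q-1)r$. Computing $\det(zI-P)$ by the cycle expansion, the same permutation expansion the paper uses in Lemma~\ref{lem:return-determinant}, would be a fine substitute for its elimination $v_{e_j-kr}=z^{-k}x_j$. The gap is that you never fix the $\alpha$-targets, and that choice is where the hypotheses $qr\le n$ and $r\le s\le n$ do their work. You leave it as ``try first, fall back otherwise''. The fallback of expanding along the block rows needs the same choice. Your one concrete suggestion, free shifts with only the last one adjusted, fails in general.

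The missing idea is a bound on each shift. For a block state $b_j$, let $h_j$ be the least $k\ge1$ for which $b_j-kr \pmod n$ is again a block state. The backward segments $b_j,b_j-r,\dots,b_j-(h_j-1)r$ partition all $n$ states, so $\sum_j h_j=n$, and your own disjointness check gives $h_j\ge q$. Suppose the $\alpha$-edge of $b_{j-1}$ pointed to $b_j-kr$ with $k\ge h_j$. The shift path from that target would meet a block state after at most $k-h_j<k$ steps, which changes either the cycle lengths or the list of cycles, and hence the polynomial. So you must send it to $b_j-(\ell_j-1)r$ with $1\le\ell_j\le h_j$ and $\sum_j\ell_j=s$. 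Such $\ell_j$ exist: the sum can be raised one unit at a time from $r$ (all $\ell_j=1$) to $n$ (all $\ell_j=h_j$). In general several of them must be raised, not just the last; for $s=n$ every $\ell_j$ must equal $h_j$. With this choice, each out-path from $b_j$ either returns to $b_j$ after $q$ steps or reaches $b_{j+1}$ after $\ell_{j+1}$ steps, without meeting any other block state. The simple cycles are then exactly your $r$ disjoint $q$-cycles and one $s$-cycle of weight $\alpha^r$, and your expansion gives the stated polynomial. When $r=1$ and $s=q$ the two edges from $b_0$ coincide. The merged entry $\alpha+\beta$ then gives the same expansion by linearity of the determinant in that row.
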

\begin{proof}
Partition the states $0,\ldots,n-1$ into $r$ chains
\[
 H_j=(j,j+r,j+2r,\ldots)\qquad(0\le j<r),
\]
stopping each chain before its labels reach $n$. Write $h_j$ for the
number of states in $H_j$ and $e_j$ for its last state. Since $qr\le n$,
every chain contains at least $q$ states. The last states of the chains
are exactly $n-r,\ldots,n-1$. Choose integers
\[
 1\le\ell_j\le h_j,\qquad\sum_{j=0}^{r-1}\ell_j=s.
\]
Such a choice exists: starting with every $\ell_j=1$, we can increase the
sum one at a time up to $\sum_j h_j=n$, and $r\le s\le n$.

For each state other than the last state of a chain, the transition
probability to the next state in that chain is $1$. From $e_j$, the
transition probabilities are $\beta=1-\alpha$ to the first of the last
$q$ states of $H_j$, and $\alpha$ to the first of the last
$\ell_{j+1}$ states of $H_{j+1}$. Read chain indices modulo $r$. If the two destinations coincide,
add their probabilities. The outgoing probabilities sum to one in every
row, so this defines a stochastic matrix $P$.

For $z\ne0$, put $x_j=v_{e_j}$. The equations $Pv=zv$ along a chain give
$v_{e_j-kr}=z^{-k}x_j$ whenever that state belongs to $H_j$. The equations
at the last states therefore become
\[
 (z^q-\beta)x_j=\alpha z^{q-\ell_{j+1}}x_{j+1}.
\]
To compute the characteristic polynomial with multiplicities, perform the
same elimination on $zI-P$. The $n-r$ eliminated coordinates contribute
the determinant factor $z^{n-r}$. The remaining matrix has diagonal
contributions $z-\beta z^{1-q}$ and contributions
$-\alpha z^{1-\ell_{j+1}}$ in positions $(j,j+1)$, with indices modulo
$r$. Its determinant is
\[
 z^{r(1-q)}(z^q-\beta)^r-\alpha^r z^{r-s}.
\]
When $r=1$, there is only one chain, so $\ell_0=s$. The reduced matrix has a single row and column:
\[
 \begin{pmatrix}z-\beta z^{1-q}-\alpha z^{1-s}\end{pmatrix}.
\]
Its determinant is this entry itself, which is exactly the preceding formula with $r=1$. Multiplying by $z^{n-r}$ gives
\eqref{eq:general-chain-poly}. Since both sides are polynomials, the
identity also holds at $z=0$.

Only the rows indexed by $e_j$ can differ from the transition
$i\mapsto i+r\pmod n$. Put $m=\ceil{r/2}$ and add $r-m$ to every label,
modulo $n$. The last states of the chains are then split between the last
$m$ labels and the first $r-m$ labels. Since $r-m\le m$, all these rows
lie among the first $m$ or last $m$ rows. Adding the same integer to all state labels leaves the rule
$i\mapsto i+r\pmod n$ unchanged, so the other rows still follow the same
circulant shift. The relabelled matrix therefore belongs to $\Q_{n,m}$. Finally,
each of the $n-r$ other rows has one positive entry, and each of the $r$
last-state rows has at most two, giving at most $(n-r)+2r=n+r$ positive
entries.
\end{proof}

Setting $r=\floor{n/q}$ in Theorem~\ref{thm:chains} gives a matrix with
characteristic polynomial \eqref{eq:ito}. The Type~0 segments are obtained
from $(1-\alpha)I+\alpha C_n^{\pm1}$. Consequently,
\begin{equation}
 \kappa_n(A)\le\ceil{\frac12\floor{n/q}}\quad(q\ge2),\qquad
 m_n^*\le\ceil{\frac{n-1}{4}}.
 \label{eq:upper}
\end{equation}
The largest value of the first bound occurs at $q=2$ and equals
$\ceil{(n-1)/4}$. To obtain the second bound for the whole region, take
$z\in\Theta_n\setminus\{1\}$ and let $w$ be the point of $\Theta_n$
farthest from $1$ on the ray from $1$ through $z$. Compactness guarantees
that $w$ exists, and it lies on $\partial\Theta_n$: otherwise the ray
could be extended slightly farther within $\Theta_n$. Since $z$ lies on
the segment from $1$ to $w$, property~\eqref{eq:fill-from-one} places $z$
in the same region $\Om_{n,m}$ as $w$.

\section{Invariant polygons and peripheral tangents}
\label{sec:polygons}

Let $P$ be stochastic and let $v\ne0$ satisfy $Pv=\lambda v$. For each
$i$, the identity $\lambda v_i=\sum_{j=0}^{n-1}P_{ij}v_j$ expresses
$\lambda v_i$ as a convex combination of the eigenvector coordinates.
Consequently,
\begin{equation}
 \lambda K(v)\subseteq K(v),\qquad
 K(v)=\conv\{v_0,\ldots,v_{n-1}\}.
 \label{eq:polygon-invariant}
\end{equation}
Repeated application gives $\lambda^kK(v)\subseteq K(v)$ for every integer
$k\ge0$. The next lemma relates the number of vertices to the rotations
and contractions that keep a polygon inside itself.

\begin{lemma}
\label{lem:sector}
Let $K\subset\mathbb C$ be a convex polygon with $N\ge3$ vertices and
$0$ in its interior. Fix $0<\theta<\pi/2$ and suppose that
\[
 e^{-t e^{i\theta}}K\subseteq K\qquad(t\ge0).
\]
This map multiplies distances from $0$ by $e^{-t\cos\theta}$ and rotates
points clockwise about $0$ through angle $t\sin\theta$. Then
$\theta\le\pi/2-\pi/N$. If equality holds, $K$ is a regular $N$-gon
centred at $0$.
\end{lemma}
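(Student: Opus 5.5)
The plan is to turn the invariance hypothesis into an infinitesimal condition on each edge of $K$, and then combine the law of sines with Jensen's inequality. Write $v_0,\dots,v_{N-1}$ for the vertices of $K$ in counterclockwise order, with indices modulo $N$. For each edge, consider the triangle with vertices $0$, $v_k$, $v_{k+1}$. Let $\varphi_k$ be its angle at $0$, $a_k$ its angle at $v_k$, and $b_k$ its angle at $v_{k+1}$. Then
\[
 \varphi_k+a_k+b_k=\pi,\qquad \sum_{k}\varphi_k=2\pi,
\]
and $\varphi_k\in(0,\pi)$ because $0$ is interior.

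\emph{Step 1: infinitesimal condition.} The maps $z\mapsto e^{-te^{i\theta}}z$ form the flow of the vector field $z\mapsto -e^{i\theta}z$. Invariance of the closed convex set $K$ therefore forces this velocity to point weakly into $K$ at every boundary point. We check this edge by edge.

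At a point $z$ of the edge $[v_k,v_{k+1}]$, let $\gamma(z)\in(0,\pi)$ be the counterclockwise angle from the outward radial direction $z/|z|$ to the edge direction $v_{k+1}-v_k$. The velocity has argument $\arg z+\pi+\theta$. Since the interior of $K$ lies to the left of the edge, the velocity points weakly inward exactly when $\pi+\theta-\gamma(z)\in[0,\pi]$, that is, when $\gamma(z)\ge\theta$.

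As $z$ moves from $v_k$ to $v_{k+1}$, its argument increases by $\varphi_k$ while the edge direction is fixed, so $\gamma$ decreases monotonically. It runs from $\pi-a_k$ down to $\pi-a_k-\varphi_k=b_k$. Hence the edge condition reduces to its value at the endpoint $v_{k+1}$, giving the necessary condition
\[
 b_k\ge\theta\qquad(0\le k<N).
\]
This is the only place where the hypothesis $e^{-te^{i\theta}}K\subseteq K$ is used. Differentiating at $t=0$ gives it directly, so it should be routine.

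\emph{Step 2: a closing identity.} By the law of sines in each triangle, $|v_{k+1}|/|v_k|=\sin a_k/\sin b_k$. Multiplying around the polygon gives $\prod_k\sin a_k=\prod_k\sin b_k$, that is,
\[
 \prod_{k}\frac{\sin(\varphi_k+b_k)}{\sin b_k}=1 .
\]
For fixed $\varphi$, the function $b\mapsto\sin(\varphi+b)/\sin b$ has derivative of the same sign as $-\sin\varphi<0$, so it is strictly decreasing on $(0,\pi-\varphi)$. Since $b_k\ge\theta$, each factor is at most $\sin(\varphi_k+\theta)/\sin\theta$. Note that $\varphi_k+\theta<\pi$, because $a_k>0$ and $b_k\ge\theta$. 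Hence
\[
 \sum_k\log\sin(\varphi_k+\theta)\ \ge\ N\log\sin\theta .
\]

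\emph{Step 3: Jensen.} The function $\log\sin$ is strictly concave on $(0,\pi)$. Jensen's inequality with $\sum_k\varphi_k=2\pi$ gives
\[
 N\log\sin\theta\le\sum_k\log\sin(\varphi_k+\theta)\le N\log\sin(2\pi/N+\theta),
\]
so $\sin(\theta+2\pi/N)\ge\sin\theta$. The point $\theta+2\pi/N$ lies in $(0,\pi)$ by the previous step, and $0<\theta<\pi/2$. On these ranges, $\sin(\theta+2\pi/N)\ge\sin\theta$ means $\theta+2\pi/N\le\pi-\theta$, which is $\theta\le\pi/2-\pi/N$.

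\emph{Equality.} If $\theta=\pi/2-\pi/N$, every inequality above is an equality. Strict concavity of $\log\sin$ forces $\varphi_k=2\pi/N$ for all $k$. Strict monotonicity of the factors forces $b_k=\theta$ for all $k$, and then $a_k=\pi-2\pi/N-\theta=\theta=b_k$. Each triangle is therefore isosceles, so $|v_k|=|v_{k+1}|$. Equal radii together with equal central angles make $K$ a regular $N$-gon centred at $0$.

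The main obstacle, and the one point needing care, is Step 1. One must identify which endpoint of each edge gives the binding constraint, which depends on the clockwise rotation direction of the flow. The key observation is that the single inequality $b_k\ge\theta$ at the trailing vertex of each edge already captures the whole edge condition. Steps 2 and 3 are then a short computation.
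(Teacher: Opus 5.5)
Your proof is correct and is essentially the paper's argument. The paper imposes the same inward-velocity condition at the counterclockwise endpoint of each edge and obtains the same bound $|v_{k+1}|/|v_k|\le\sin(\theta+\varphi_k)/\sin\theta$, phrased via the angle between a vertex and the edge's outward normal rather than the law of sines, then applies concavity of $\log\sin$ exactly as you do (in the equality case it notes each ratio bound equals $1$, whereas you force $b_k=\theta$; both are valid).
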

\begin{proof}
List the vertices counterclockwise as $z_i=R_i e^{i\phi_i}$, with
$R_i>0$, and read the indices cyclically. Let $\delta_i$ be the
counterclockwise angle from the ray through $z_{i-1}$ to the ray through
$z_i$. Since $0$ lies inside $K$,
\[
 0<\delta_i<\pi,\qquad\sum_i\delta_i=2\pi.
\]
Let $\gamma_i$ be the angle of the outward normal to the edge from
$z_{i-1}$ to $z_i$, and put $a_i=\phi_i-\gamma_i$. The two endpoints
have the same positive projection on this normal. Choosing the angle
representatives accordingly gives
\[
 -\frac\pi2<a_i-\delta_i<a_i<\frac\pi2,\qquad
 \frac{R_i}{R_{i-1}}
 =\frac{\cos(a_i-\delta_i)}{\cos a_i}
 =\cos\delta_i+\tan a_i\sin\delta_i.
\]

The curve $t\mapsto e^{-t e^{i\theta}}z_i$ stays inside $K$, so its
initial velocity $-e^{i\theta}z_i$ cannot point across this edge to the
outside. Its component along the outward normal is
$-R_i\cos(a_i+\theta)$, which must be nonpositive. With the above range
for $a_i$, this gives $a_i\le\pi/2-\theta$. Since $\tan$ is increasing
on $(-\pi/2,\pi/2)$, the ratio above satisfies
\begin{equation}
 0<\frac{R_i}{R_{i-1}}
 \le\frac{\sin(\theta+\delta_i)}{\sin\theta}.
 \label{eq:sector-ratio}
\end{equation}
In particular, $\theta+\delta_i\in(0,\pi)$. The ratios on the left
multiply to $1$. Taking products and using concavity of $\log\sin$
on $(0,\pi)$ gives
\[
 1\le\prod_i\frac{\sin(\theta+\delta_i)}{\sin\theta}
 \le\left[\frac{\sin(\theta+2\pi/N)}{\sin\theta}\right]^N.
\]
Hence $\sin(\theta+2\pi/N)\ge\sin\theta$. Since
$\theta<\theta+2\pi/N<\pi$, it follows that
$\theta+2\pi/N\le\pi-\theta$, or $\theta\le\pi/2-\pi/N$.

If equality holds, strict concavity of $\log\sin$ forces all
$\delta_i$ to equal $2\pi/N$. The right-hand side of
\eqref{eq:sector-ratio} is then $1$ for every $i$. Because the positive
ratios $R_i/R_{i-1}$ have product $1$ and are each at most $1$, they
must all equal $1$. This implies that all vertices have the same distance from $0$
and are equally spaced in angle, so $K$ is a regular $N$-gon centred
at $0$.
\end{proof}

\begin{theorem}
\label{thm:tangent}
Let $P_j$, $j=1,2,\ldots$, be stochastic matrices of order $n$ whose
entries converge to the corresponding entries of a stochastic matrix
$P_0$. We denote this convergence by $P_j\to P_0$. Let
$\lambda_j\in\spec(P_j)$ converge to a primitive $q$th root of unity
$\zeta$, with $q\ge2$. Put $w_j=1-\overline\zeta\lambda_j$ and assume
$w_j\ne0$ and $\arg w_j\to\theta$. If $\zeta$ has algebraic multiplicity
$r$ as an eigenvalue of $P_0$, then
\begin{equation}
 |\theta|\le\frac\pi2-\frac\pi{qr}.
 \label{eq:tangent-bound}
\end{equation}
If $n=qd$ and $|\theta|=\pi/2-\pi/n>0$, the states of $P_0$ split into
$d$ disjoint cycles of length $q$, with every transition around each cycle
having probability $1$. In this case, for any eigenvectors satisfying
$P_jv_j=\lambda_jv_j$ and $\|v_j\|_\infty=1$, every convergent subsequence
has a limit whose coordinates are the $n$ distinct vertices of a regular
$n$-gon centred at $0$.
\end{theorem}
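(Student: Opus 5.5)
We prove the bound \eqref{eq:tangent-bound} by passing the invariant-polygon property \eqref{eq:polygon-invariant} to the limit. The result is a polygon invariant under a continuous spiral semigroup, to which Lemma~\ref{lem:sector} applies once its vertices are counted. By complex conjugation, replacing $P_j$, $\lambda_j$, $\zeta$ by their conjugates, it suffices to treat $\theta\ge0$. Choose eigenvectors $P_jv_j=\lambda_jv_j$ with $\|v_j\|_\infty=1$ and pass to a subsequence with $v_j\to v_0$. Then $P_0v_0=\zeta v_0$ and $v_0\neq0$. Since $\zeta\ne1$, $v_0$ is not constant.

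\textbf{Step 1 (continuous invariance).} Write $\lambda_j^q=(1-w_j)^q=e^{-u_j}$ with $u_j\to0$. Since $u_j=qw_j+O(|w_j|^2)$, we have $\arg u_j\to\theta$. Iterating \eqref{eq:polygon-invariant} gives $e^{-ku_j}K(v_j)\subseteq K(v_j)$ for all $k\ge0$. Fix $t\ge0$ and take $k=\lfloor t/|u_j|\rfloor$. Then $ku_j\to te^{i\theta}$. Moreover $K(v_j)\to K(v_0)$ in the Hausdorff metric, because the vertex lists converge. Passing to the limit therefore gives
\[
 e^{-te^{i\theta}}K_0\subseteq K_0,\qquad K_0=K(v_0),\quad t\ge0 .
\]
Similarly $\lambda_jK(v_j)\subseteq K(v_j)$ gives $\zeta K_0\subseteq K_0$. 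Since $K_0$ is compact, $\zeta$ permutes its extreme points. Hence $K_0$ is invariant under rotation by $2\pi/q$, and its centroid is $0$.

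If $\theta=\pi/2$, the semigroup consists of pure rotations through every angle, which no polygon admits. If $K_0$ is a segment, which can happen only when $q=2$, a nontrivial spiral cannot preserve it, so $\theta=0$ and the bound holds trivially. Otherwise $K_0$ is a polygon with $0$ in its interior, and Lemma~\ref{lem:sector} gives $\theta\le\pi/2-\pi/N$, where $N$ is the number of vertices of $K_0$. If $\theta=0$ there is nothing to prove.

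\textbf{Step 2 (vertex count, the main obstacle).} We must show $N\le qr$. Here the structure of peripheral eigenvectors of the stochastic matrix $P_0$ is used.
\begin{itemize}
\item The eigenvalue $\zeta$ is semisimple, so its eigenspace has dimension $r$.
\item The eigenspace is spanned by vectors supported, on the recurrent states, by single recurrent classes whose periods are divisible by $q$. Each class contributes one dimension.
\item On such a class, an eigenvector takes the values $c\zeta^k$ for $0\le k<q$, constant on the cyclic subclasses.
\item On a transient state $i$, the coordinate $\zeta v_i$ is a convex combination of other coordinates. Following the dynamics forward, it is a convex combination of recurrent values.
\end{itemize}
Consequently every extreme point of $K_0$ is a coordinate on a recurrent class. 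Each of the at most $r$ contributing classes supplies at most $q$ distinct values, so $N\le qr$ and \eqref{eq:tangent-bound} follows. The delicate part is making the transient-state claim rigorous: iterate $v_i=\bar\zeta\sum_jP_{0,ij}v_j$ and use that the mass on transient states tends to zero under powers of $P_0$.

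\textbf{Step 3 (equality case).} Suppose $n=qd$ and $\theta=\pi/2-\pi/n$. Then $N\ge n$, while $N\le n$ trivially, so $N=n$. By the equality clause of Lemma~\ref{lem:sector}, $K_0$ is a regular $n$-gon centred at $0$ whose vertices are exactly the $n$ coordinates of $v_0$, all distinct. Each coordinate $v_i$ is a vertex, hence so is $\zeta v_i$. Since $\zeta v_i=\sum_jP_{0,ij}v_j$ is a convex combination, extremality forces row $i$ of $P_0$ to put mass $1$ on the unique $j$ with $v_j=\zeta v_i$. Thus $P_0$ is a permutation matrix with $P_0^q=I$, because $\zeta^q=1$. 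It has no fixed points, since $\zeta\neq1$ and the coordinates are distinct, and every cycle has length exactly $q$, since $\zeta$ is primitive. This gives $d$ disjoint $q$-cycles. The argument applies to every convergent subsequence, so each limit has the stated regular $n$-gon form.
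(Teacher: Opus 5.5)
Your proof is correct. For the inequality it is the paper's argument: you pass to the limit to get spiral invariance of $K_0$, apply Lemma~\ref{lem:sector}, and bound the vertex count by $qr$ using the closed classes of period divisible by $q$, with transient coordinates absorbed into convex combinations. You obtain $0$ in the interior from $\zeta K_0=K_0$ rather than from a spiral orbit, and either works. Two points should be stated explicitly. First, $\Re w_j>0$ (from $|\lambda_j|\le1$ and $w_j\ne0$) places $\theta$ in $[-\pi/2,\pi/2]$, which your case split presupposes. Second, the zero coordinates on closed classes of period not divisible by $q$ are not vertices, because $0$ is interior.

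The equality case is where you genuinely differ. The paper feeds $qr\ge n$ into the class count: each of the $r$ contributing classes has at least $q$ states, which gives $r=d$ and deterministic $q$-state cycles. Only afterwards does it apply the lemma to get $N=n$ and regularity. You go the other way. You get $N=n$ from the lemma and the trivial bound $N\le n$, and regularity from the equality clause. Then extremality of each vertex $\zeta v_{0,i}$ forces row $i$ of $P_0$ to be a unit vector. So $P_0$ is a permutation all of whose cycles have length exactly $q$, since $v_{0,i}\ne0$ and $\zeta$ is primitive. Your route makes the equality case independent of the multiplicity and Perron--Frobenius class analysis, and it gives $q\mid n$ for free. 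The paper's route reuses the class structure already built for the inequality and obtains the cycle decomposition directly from it.
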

\begin{proof}
Since $|\lambda_j|\le1$ and $w_j\ne0$, we have $\Re w_j>0$, so
$\arg w_j\in(-\pi/2,\pi/2)$. Choose eigenvectors with
$\|v_j\|_\infty=1$ and pass to a subsequence for which $v_j\to v$.
Then $\|v\|_\infty=1$ and $P_0v=\zeta v$.

For fixed $t\ge0$, put $k_j=\floor{t/(q|w_j|)}$. Since
$\lambda_j=\zeta(1-w_j)$ and $\zeta^q=1$,
\[
 \lambda_j^{qk_j}=(1-w_j)^{qk_j}\longrightarrow e^{-t e^{i\theta}}.
\]
Apply \eqref{eq:polygon-invariant} repeatedly to obtain
$\lambda_j^{qk_j}K(v_j)\subseteq K(v_j)$, and pass to the limit:
\begin{equation}
 e^{-t e^{i\theta}}K(v)\subseteq K(v)\qquad(t\ge0).
 \label{eq:limit-spiral}
\end{equation}
If $\theta=\pm\pi/2$, this would make $K(v)$ invariant under every
rotation, which is impossible for a nonzero polygon. If $\theta=0$,
the required bound is immediate. Taking complex conjugates reduces the
remaining cases to $0<\theta<\pi/2$. By \eqref{eq:limit-spiral}, the
orbit of any nonzero point of $K(v)$ stays inside $K(v)$ and passes
through every direction. Choose three points of this orbit at angles
separated by $2\pi/3$. Their triangle contains $0$ in its interior and
lies in $K(v)$, so $0$ is also an interior point of $K(v)$.

We next bound the number of vertices of $K(v)$. Every eigenvalue of
$P_0$ on the unit circle is semisimple: a larger Jordan block would make
the powers of $P_0$ unbounded, whereas every power is stochastic. A closed
irreducible class is a group of states in which each state can reach every
other and from which the chain cannot leave. Such a class contributes
$\zeta$ exactly when its period is divisible by $q$, and then contributes
it with multiplicity $1$. Thus there are
exactly $r$ such classes. On each of them, the coordinates of $v$ take
at most the $q$ values $a,\zeta a,\ldots,\zeta^{q-1}a$ for some $a$.
On all other closed classes, the coordinates are $0$.

Because $P_0^qv=v$, each coordinate $v_i$ corresponding to a state $i$
outside these closed classes is a convex combination of the coordinates
$v_k$ for states $k$ inside them. The coefficient of $v_k$ is the probability
that, starting from $i$, the chain with transition matrix $P_0^q$ first
enters the union of the closed classes at state $k$. These probabilities
sum to $1$.
Also, $0$ lies in the convex hull of each set
$\{a,\zeta a,\ldots,\zeta^{q-1}a\}$, because its elements sum to $0$.
Consequently, $K(v)$ has at most $qr$ vertices. Lemma~\ref{lem:sector}
now gives \eqref{eq:tangent-bound}.

If $n=qd$ and we have equality $|\theta|=\pi/2-\pi/n>0$, inequality~\eqref{eq:tangent-bound} gives $qr\ge n$. Each of the
$r$ contributing classes has at least $q$ states, so $qr\le n$.
Hence $r=d$, every state belongs to one of these classes, and each class
has exactly $q$ states. Such a class has period $q$ and must therefore
be a cycle whose transitions all have probability $1$. Let $N$ be the number of vertices of $K(v)$. Lemma~\ref{lem:sector} gives $|\theta|\le\pi/2-\pi/N$. Since $|\theta|=\pi/2-\pi/n$, this implies $N\ge n$. The polygon is the convex hull of $n$ coordinates, so $N\le n$ and hence $N=n$. Lemma~\ref{lem:sector} then shows that the polygon is regular, so the $n$ coordinates of $v$ are its distinct vertices.
\end{proof}

Consider the arc followed clockwise from $\zeta=e^{2\pi ip/q}$ towards
$e^{2\pi ip'/s}$, so that $ps-p'q=1$. Put $d=\floor{n/q}$ and
$u=e^{-2\pi i/(qd)}$. As $\alpha\downarrow0$, the eigenvalue on this arc
satisfies
\begin{equation}
 \lambda(\alpha)=\zeta\left(1+\frac{u-1}{q}\alpha+O(\alpha^2)\right),
 \qquad
 \arg(1-\overline\zeta\lambda(\alpha))\longrightarrow
 \frac\pi2-\frac\pi{qd}.
 \label{eq:endpoint-expansion}
\end{equation}
To obtain the expansion, write \eqref{eq:ito} for a nonzero root as
\[
 (z^q-1+\alpha)^d=\alpha^d z^{qd-s}.
\]
Substitute $z=\zeta(1+\alpha a)$, divide by $\alpha^d$, and let
$\alpha\to0$. Since $\zeta^{qd-s}=e^{-2\pi i/q}$, this gives
\[
 (1+qa)^d=e^{-2\pi i/q}.
\]
The clockwise boundary branch selects $1+qa=u$. This is a simple root
of the equation in $a$, so the implicit function theorem gives
$a=a(\alpha)$ with $a(0)=(u-1)/q$, yielding the first formula.
Finally,
\[
 1-u=2i e^{-\pi i/(qd)}\sin(\pi/(qd)),
\]
so $\arg(1-u)=\pi/2-\pi/(qd)$, which gives the stated angle limit.

\begin{proposition}
\label{prop:nondivisor}
Let $A$ be a Karpelevi\v{c} boundary arc with endpoint denominators $q<s$,
and let $\zeta$ be its endpoint of order $q$. If $q$ does not divide $n$
and $2m<\floor{n/q}$, then all points of $A$ sufficiently close to
$\zeta$, apart from $\zeta$ itself, lie outside $\Om_{n,m}$.
\end{proposition}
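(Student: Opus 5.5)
We argue by contradiction, combining the endpoint expansion~\eqref{eq:endpoint-expansion} with Theorem~\ref{thm:tangent}. Suppose there are points $\lambda_j\in A\cap\Om_{n,m}$ with $\lambda_j\ne\zeta$ and $\lambda_j\to\zeta$. Pick $P_j\in\Q_{n,m}$ with $\lambda_j\in\spec(P_j)$. Since $\Q_{n,m}$ is compact, we may pass to a subsequence with $P_j\to P_0\in\Q_{n,m}$. By continuity of the characteristic polynomial, $\zeta\in\spec(P_0)$.

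Along the arc, the points near $\zeta$ are $\lambda(\alpha)$ with $\alpha\downarrow0$, where $\alpha$ depends continuously on the point. So \eqref{eq:endpoint-expansion} gives $w_j=1-\overline\zeta\lambda_j\ne0$ and $\arg w_j\to\pi/2-\pi/(qd)$ with $d=\floor{n/q}$; this follows for either orientation of the arc after conjugation. Theorem~\ref{thm:tangent} then yields $\pi/2-\pi/(qd)\le\pi/2-\pi/(qr)$, that is, $r\ge d$, where $r$ is the algebraic multiplicity of $\zeta$ in $P_0$.

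The main step is to show that $P_0\in\Q_{n,m}$ has multiplicity $r\le 2m$ for $\zeta$ whenever $q\nmid n$. This contradicts $r\ge d>2m$. Write $P_0=C+E$ as in Proposition~\ref{prop:completion}, with $\rank E\le 2m$. The eigenvalues of the circulant $C$ are $\sum_k c_k\omega^{rk}$, which are convex combinations of $n$th roots of unity. Since $\zeta$ is a primitive $q$th root with $q\nmid n$, it is not an $n$th root of unity and lies on the unit circle. A convex combination of points of $H_n$ can reach the circle only at a vertex, so $\zeta$ is not an eigenvalue of $C$, and $\zeta I-C$ is invertible.

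Hence $\zeta I-P_0=(\zeta I-C)-E$ has rank at least $n-2m$, so the geometric multiplicity of $\zeta$ is at most $2m$. Theorem~\ref{thm:tangent} notes that peripheral eigenvalues of a stochastic matrix are semisimple, so the algebraic multiplicity also satisfies $r\le 2m$. This gives the contradiction.

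The step needing most care is the claim that $\zeta\notin\spec(C)$. If some eigenvalue $\sum c_k\omega^{rk}$ had modulus $1$, it would equal a single root $\omega^{rk}$. That root is an $n$th root of unity and so cannot equal $\zeta$. A secondary point is checking that the angle limit in \eqref{eq:endpoint-expansion} holds along the whole sequence and not just along the parametrisation. This follows because the arc near $\zeta$ is exactly the image of small $\alpha$.
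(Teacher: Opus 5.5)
Your proposal is correct and follows essentially the same route as the paper: compactness gives a limit $P_0\in\Q_{n,m}$, the tangent bound combined with the endpoint expansion forces $r\ge d$, and the decomposition $P_0=C+E$ with $\zeta I-C$ invertible (because $\zeta$ is not an $n$th root of unity) together with semisimplicity gives $r\le 2m$. The only cosmetic difference is that you bound the geometric multiplicity by rank subadditivity for $(\zeta I-C)-E$, whereas the paper places the eigenspace inside the image of $-(C-\zeta I)^{-1}E$; these are equivalent.
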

\begin{proof}
Suppose instead that there are matrices $P_j\in\Q_{n,m}$ with eigenvalues
on the arc at positive parameters $\alpha_j$ tending to $0$. Compactness
of $\Q_{n,m}$ gives a subsequence whose entries converge to those of a
matrix $P_0\in\Q_{n,m}$. Put $d=\floor{n/q}$, and let $r$ be the
algebraic multiplicity of $\zeta$ in $P_0$. Comparing
\eqref{eq:tangent-bound} and \eqref{eq:endpoint-expansion}, taking complex
conjugates if necessary, gives $r\ge d$.

Write $P_0=C+E$ as in \eqref{eq:rank}, with $C$ stochastic and circulant
and $\rank E\le2m$. Each eigenvalue of $C$ is a convex combination of
$n$th roots of unity. Such a combination can have modulus $1$ only if
all roots with positive weight are equal. Hence every eigenvalue of $C$
on the unit circle is itself an $n$th root of unity. Because $\zeta$ is
a primitive $q$th root and $q$ does not divide $n$, it is not an $n$th
root. Thus $C-\zeta I$ is invertible.

For every vector $x$ satisfying $P_0x=\zeta x$, we have
\[
 x=-(C-\zeta I)^{-1}Ex.
\]
The eigenspace is therefore contained in the image of a matrix of rank
at most $\rank E\le2m$, so its dimension is at most $2m$. Because
eigenvalues on the unit circle are semisimple, this dimension equals the
algebraic multiplicity $r$. This contradicts $r\ge d>2m$.
\end{proof}

\section{Divisor roots and local polygon boundaries}
\label{sec:divisor}

When $q$ divides $n$, the endpoint $\zeta$ is an $n$th root of unity
and can be an eigenvalue of a stochastic circulant matrix. In the decomposition
$P=C+E$, the matrix $C-\zeta I$ may then be singular, so the preceding
argument does not apply. We will show that, if $2m<n/q$, the region $\Om_{n,m}$ agrees
with the polygon $H_n$ in a sufficiently small neighbourhood of $\zeta$.

\begin{theorem}[Local equality with the circulant polygon]
\label{thm:divisor}
\label{thm:local-polygon}
Suppose $n=qd$, $q\ge2$, $d\ge3$ and $2m<d$. For every primitive
$q$th root of unity $\zeta$, there is an $\varepsilon>0$ such that,
with $U=\{z\in\mathbb C:|z-\zeta|<\varepsilon\}$,
\begin{equation}
 \Om_{n,m}\cap U=H_n\cap U.
 \label{eq:local-polygon}
\end{equation}
On each of the two Karpelevi\v{c} boundary arcs meeting $\zeta$, all
points sufficiently close to $\zeta$ but not equal to it therefore
lie outside $\Om_{n,m}$.
\end{theorem}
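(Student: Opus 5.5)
The inclusion $H_n\cap U\subseteq\Om_{n,m}\cap U$ is immediate from \eqref{eq:circulant-polygon}, since $\Om_{n,0}\subseteq\Om_{n,m}$. For the reverse inclusion, we first describe $H_n$ near $\zeta$. Because $\zeta$ is an $n$th root of unity, it is a vertex of $H_n$. The two incident edges run towards $\zeta\omega^{\pm1}$. In the coordinate $w=1-\overline\zeta\lambda$, they leave $0$ at arguments $\pm(\pi/2-\pi/n)$. So for small $\varepsilon$, the set $H_n\cap U$ is exactly $\{\lambda\in U:\lambda=\zeta\text{ or }|\arg w|\le\pi/2-\pi/n\}$. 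It therefore suffices to exclude eigenvalues $\lambda\ne\zeta$ of matrices in $\Q_{n,m}$ that lie arbitrarily close to $\zeta$ with $|\arg w|>\pi/2-\pi/n$.

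We argue by contradiction and take $P_j\in\Q_{n,m}$ with eigenvalues $\lambda_j\to\zeta$ in this forbidden sector. By compactness of $\Q_{n,m}$, we pass to a subsequence with $P_j\to P_0\in\Q_{n,m}$ and $\arg w_j\to\theta$, where $|\theta|\ge\pi/2-\pi/n$. Theorem~\ref{thm:tangent} gives $|\theta|\le\pi/2-\pi/(qr)\le\pi/2-\pi/n$, so equality holds. Since $d\ge3$ and $q\ge2$, the hypothesis $|\theta|=\pi/2-\pi/n>0$ of the equality case is satisfied. Hence $P_0$ is a permutation matrix consisting of $d$ disjoint $q$-cycles. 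Moreover, normalised eigenvectors $v_j$ converge, along a subsequence, to a vector $v$ whose $n$ coordinates are the distinct vertices of a regular $n$-gon. Next, write $P_0=C+E$ using Proposition~\ref{prop:completion}. Since $n-2m\ge n-d+1>0$, the interior rows exist. Each of them is a $0/1$ row, so $C=C_n^k$ for some shift $k$. The cycle structure then forces the interior states to move by $i\mapsto i+k$ within their $q$-cycles. So far we only have a limiting configuration and no contradiction, because this limit is not excluded by itself.

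The main step, which I expect to be the obstacle, is a first-order perturbation analysis at $P_0$. The eigenvalue $\zeta$ of $P_0$ is semisimple of multiplicity $d$. Let $X$ and $Y^*$ be right and left eigenbases supported on the $d$ cycles, with $Y^*X=I$. Then the $d$ eigenvalues of $P_j$ near $\zeta$ are $\zeta+\mu+o(\|P_j-P_0\|)$, where $\mu$ runs over the eigenvalues of the $d\times d$ compression $M_j=Y^*(P_j-P_0)X$. I would split $P_j-P_0=(C_j-C_n^k)+(E_j-E)$. The compression of the circulant part commutes with the compressed shift, and its eigenvalues lie in the cone $|\arg(-\overline\zeta\mu)|\le\pi/2-\pi/n$, by the same computation that gives $H_n$. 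The boundary part is supported on the $2m<d$ boundary rows. Therefore at least one cycle, and more usefully at least $d-2m$ columns of the coupling pattern between cycles, receive no non-circulant contribution.

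To leave the cone, a rescaled limit $M_j/\|M_j\|$ must produce a direction of argument exactly $\pm(\pi/2-\pi/n)$. The regular-$n$-gon conclusion should then force a full cyclic coupling through all $d$ cycles. This coupling requires a non-circulant entry in a row of each cycle. That is $d$ rows, contradicting $2m<d$. I would make this rigorous by applying Lemma~\ref{lem:sector} to the polygon $K(v)$ together with the rescaled invariance. If first order is degenerate, so that the boundary contribution to $M_j$ vanishes, I would repeat the argument at the next order.

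The final sentence of the theorem then follows. Near $\zeta$, the two Karpelevi\v{c} arcs leave at angles $\pm(\pi/2-\pi/(qd'))$ with $d'=\floor{N/q}$, by \eqref{eq:endpoint-expansion}. They lie strictly outside $H_n$ near $\zeta$, apart from $\zeta$ itself.
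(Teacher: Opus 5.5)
Your reduction to the equality case of Theorem~\ref{thm:tangent} is sound. The gap is in the first-order step: it cannot yield a contradiction, and the combinatorial claim it rests on is false. The limiting angle of $w_j$ equals the side angle. So the distance from $\lambda_j$ to the line through $\zeta$ and $\zeta e^{-2\pi i/n}$ (or its conjugate) is $o(|\lambda_j-\zeta|)$. Also $|\lambda_j-\zeta|\le\|P_j-P_0\|_2$, because $P_0$ is a permutation matrix. Hence whether $\lambda_j$ lies inside or outside $H_n$ is hidden in the $o(\|P_j-P_0\|)$ error term of your compression $M_j$.

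Nor does the extremal direction need non-circulant rows. Write $\zeta=e^{2\pi ip/q}$ and consider the circulant matrices $(1-\epsilon)C_n^{dp}+\epsilon C_n^{dp-1}\in\Q_{n,0}$. They tend to $d$ disjoint $q$-cycles. They have the regular-$n$-gon eigenvector $(\omega^k)_k$, with eigenvalue $(1-\epsilon)\zeta+\epsilon\zeta e^{-2\pi i/n}$ on a side of $H_n$. The coupling between all the $q$-cycles is carried entirely by circulant rows. Your argument uses $\lambda_j\notin H_n$ only through the limiting angle, so it would equally exclude these points of $\Om_{n,0}$. Indeed, Lemma~\ref{lem:first-order} shows that the first-order term is always the circulant $\frac{\alpha_j}{q}(S-R)$. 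Your ``degenerate'' case is therefore the only case, and everything rests on the next-order step you leave unspecified.

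The paper supplies that step in two stages. First, Lemma~\ref{lem:first-order} proves $P_j=R+\frac{\alpha_j}{q}(S-R)+o(\alpha_j)$. This is where $2m<d$ really enters, through the cycle-averaging inequality \eqref{eq:cycle-averaging}. The boundary labels lie on distinct $R$-cycles and miss at least one. Hence weights on the $S$-transitions that are constant on $I_{n,m}$ and sum to $1$ around each $R$-cycle must be uniform.

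Second, write $P_j=R+t\diag(\one+F)(S-R)+tG$ and $\lambda_j=\zeta(1+t\sigma)$, with $u=e^{-2\pi i/n}$. An energy estimate on the cycle-averaged eigenvector equation gives $\|F\|\le C(\|h\|+\|G\|)$, where $h$ is the departure of the averaged eigenvector from $\one$. It then bounds a positively weighted normal component $\operatorname{Im}(\sigma/(u-1))$ by $-c\|h\|^2-a\,\ell(G)+Ct(\|h\|^2+\ell(G)^2)\le0$. Here $\ell(G)$ is the mass off the $R$- and $S$-positions. Points outside $H_n$ would need this component to be positive.

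Your last paragraph has the same defect. Since $q\mid n$, you have $\lfloor N/q\rfloor=\lfloor n/q\rfloor=d$. So by \eqref{eq:endpoint-expansion} the two arcs are tangent to the sides of $H_n$ at $\zeta$. Equal first-order angles say nothing about which side of those lines the arcs lie on. You need the second-order term
$\operatorname{Im}\frac{\lambda(\alpha)-\zeta}{\zeta(u-1)}=\frac{\sin(2\pi/n)}{q^2}\bigl(\frac{q-1}{2}-\frac{n-s}{d}\bigr)\alpha^2+O(\alpha^3)$.
It is positive because $q+s>n$ gives $n-s\le q-1$, and $d\ge3$; this is where the hypothesis $d\ge3$ is needed.
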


The radius $\varepsilon$ and the constants in the proof may depend on
$n,m,\zeta$. The result gives no radius that works uniformly as $n$ varies.
Figure~\ref{fig:local-polygon-comparison} illustrates the two boundaries
for $n=9$ and $m=1$.

\begin{figure}[htbp]
\centering
\includegraphics[width=\textwidth]{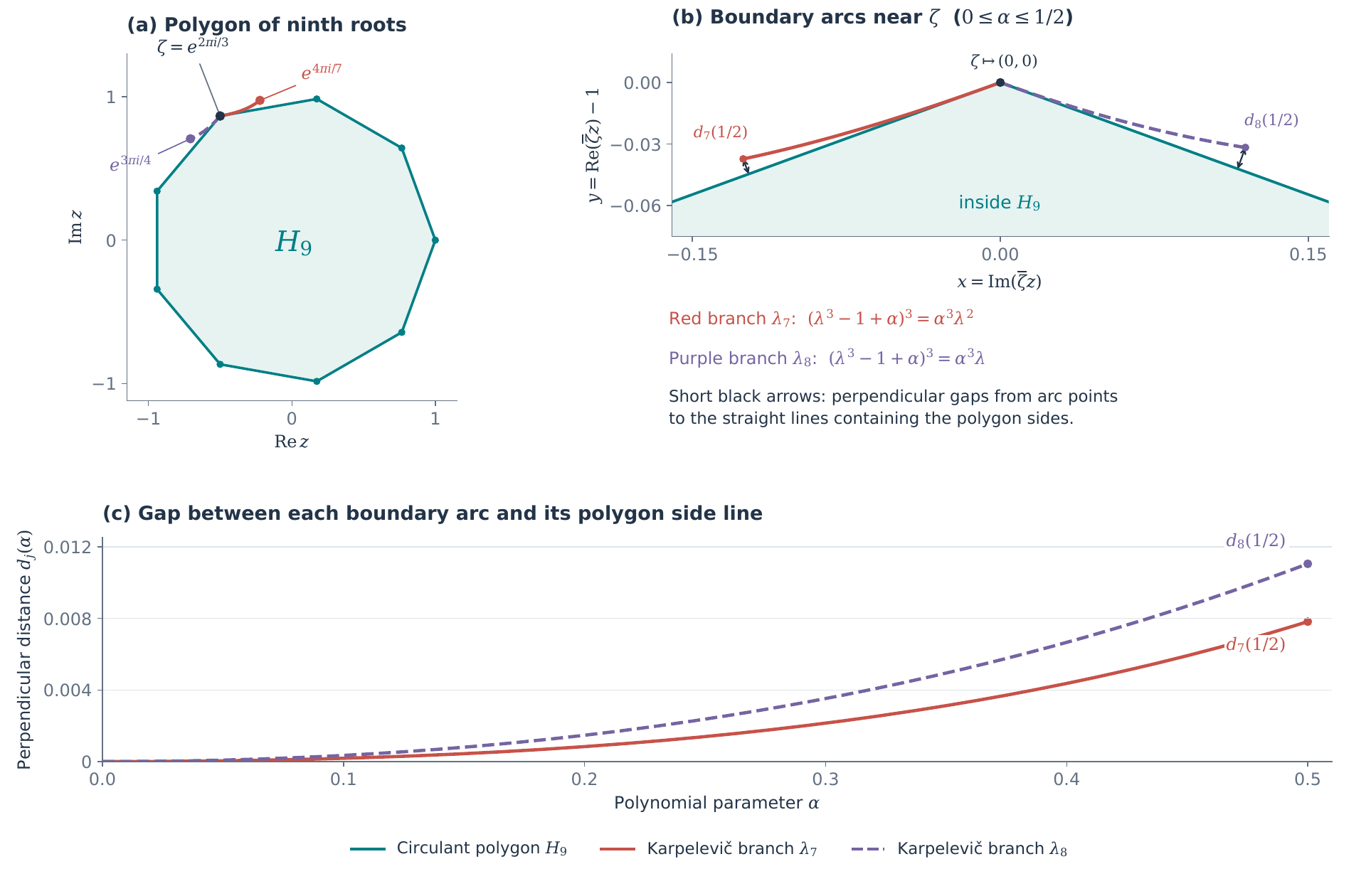}
\caption{The circulant polygon $H_9$ and two Karpelevi\v{c} boundary arcs meeting $\zeta=e^{2\pi i/3}$. (a) $H_9$ is the regular nonagon whose vertices are the ninth roots of unity; it is the eigenvalue region of stochastic circulant matrices of order $9$. The red branch $\lambda_7(\alpha)$ joins $\zeta$ to $e^{4\pi i/7}$, and the purple branch $\lambda_8(\alpha)$ joins it to $e^{3\pi i/4}$ as $\alpha$ runs from $0$ to $1$. They are the selected polynomial roots specified in panel (b). (b) A zoom for $0\le\alpha\le1/2$, in coordinates $x=\operatorname{Im}(\overline\zeta z)$ and $y=\operatorname{Re}(\overline\zeta z)-1$, which move $\zeta$ to $(0,0)$. The shaded part lies inside $H_9$. The black arrows at $\alpha=1/2$ measure the perpendicular gaps from the arc points to the lines containing the adjacent polygon sides. (c) These gaps are plotted as $d_j(\alpha)=\operatorname{dist}(\lambda_j(\alpha),L_j)$, where $L_j$ is the corresponding side line. Panel (b) shows the arc points on the outer side of these lines; panel (c) quantifies their separation despite the shared tangent directions at $\zeta$. By Theorem~\ref{thm:local-polygon}, $\Omega_{9,1}$ agrees with $H_9$ sufficiently close to $\zeta$, so nearby points on these arcs cannot be eigenvalues of matrices in our family.}
\label{fig:local-polygon-comparison}
\end{figure}

For $m=0$, the equality follows from $\Om_{n,0}=H_n$. Therefore suppose $m\ge1$
and the local equality fails. Choose eigenvalues
$\lambda_j\in\Om_{n,m}\setminus H_n$ tending to $\zeta$ and matrices
$P_j\in\Q_{n,m}$ with these eigenvalues. Then pass to subsequences so that the
entries of $P_j$ converge to those of $P_0$, the corresponding eigenvectors
$v_j$, normalised by $\|v_j\|_\infty=1$, converge to $v$, and the angles
$\arg(1-\overline\zeta\lambda_j)$ converge.

Near $\zeta$, the polygon is the intersection of the two half-planes
bounded by the lines joining $\zeta$ to its neighbouring vertices. Select a subsequence that violates
the same one of these two half-planes. Theorem~\ref{thm:tangent} bounds the absolute value of the limit of
$\arg(1-\overline\zeta\lambda_j)$ by $\pi/2-\pi/n$, because $q$ times the multiplicity of $\zeta$ cannot
exceed $n$. These are precisely the two edge directions. Since the points lie outside the polygon, they must approach
along one of the two edges. Taking complex conjugates if necessary,
choose the clockwise edge and put $u=e^{-2\pi i/n}$. Define
\[
 \alpha_j=q\operatorname{Re}
 \frac{\lambda_j-\zeta}{\zeta(u-1)}.
\]
Then $\alpha_j>0$ for sufficiently large $j$, and
\[
 \lambda_j=\zeta\left(1+\frac{u-1}{q}\alpha_j+o(\alpha_j)\right).
\]
The limit of $\arg(1-\overline\zeta\lambda_j)$ therefore has
absolute value $\pi/2-\pi/n$.
Theorem~\ref{thm:tangent} therefore implies that $P_0$ consists of $d$ disjoint cycles of length $q$,
with probability $1$ for every transition around each cycle, and the
coordinates of $v$ are the distinct vertices of a regular $n$-gon
centred at $0$. Its radius is $1$ because $\|v\|_\infty=1$.

Every interior row $i$ of $P_0$ assigns probability $1$ to
$i+h\pmod n$, with the same integer $h$ for all interior rows. The $2m<d$ states with labels in $B_{n,m}$ meet fewer
than $d$ cycles, so at least one complete cycle has all its labels in
$I_{n,m}$. Following that cycle shows that adding $h$ modulo $n$ has
order $q$. Thus
\[
 \gcd(h,n)=d,\qquad R=C_n^h.
\]
The cycles of $R$ correspond to the residue classes modulo $d$. Because $B_{n,m}$
is a block of $2m<d$ consecutive labels in cyclic order, it meets each
such cycle at most once. If a cycle contains a label $b\in B_{n,m}$,
its other rows already form the path from $R(b)$ to $b$. The remaining
transition must close this path: linking two or more such paths would
produce a cycle of length at least $2q$, whereas every cycle of $P_0$
has length $q$. Hence $P_0=R$ on every row.

Let $K$ send each state to the state whose coordinate is its clockwise
neighbour in the polygon. Thus $Kv=uv$, $K$ is an $n$-cycle, and
$K$ commutes with $R$. Put $S=RK$ and
\[
 \mathcal A=\frac1q\sum_{k=0}^{q-1}R^k.
\]
This is the orthogonal projection onto vectors constant on each
$R$-cycle. For every real mean-zero vector $f$ constant on $I_{n,m}$,
\begin{equation}
 \|\mathcal Af\|^2\ge\kappa\|f\|^2,\qquad
 \kappa=\frac{d-2m}{n-2m}>0.
 \label{eq:cycle-averaging}
\end{equation}
Indeed, write $f=c\one+x$, where $x$ is supported on $B_{n,m}$.
The mean-zero condition gives $nc=-\sum_i x_i$. Since the boundary
indices lie on distinct $R$-cycles,
\[
 \|f\|^2=\|x\|^2-\frac{(\sum_i x_i)^2}{n},\qquad
 \|\mathcal Af\|^2=\frac{\|x\|^2}{q}-\frac{(\sum_i x_i)^2}{n}.
\]
The bound $(\sum_i x_i)^2\le2m\|x\|^2$ proves
\eqref{eq:cycle-averaging}.

\begin{lemma}
\label{lem:first-order}
Along the chosen subsequence,
\begin{equation}
 P_j=R+\frac{\alpha_j}{q}(S-R)+o(\alpha_j).
 \label{eq:first-order}
\end{equation}
\end{lemma}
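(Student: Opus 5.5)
The plan is a first-order perturbation argument around the permutation $R$, followed by the equality case of Lemma~\ref{lem:sector} and the averaging bound \eqref{eq:cycle-averaging}. Write $P_j=R+\Delta_j$ and $\varepsilon_j=\|\Delta_j\|_\infty\to0$. Since $R$ is a permutation matrix and $P_j$ is stochastic, each row of $\Delta_j$ has the following sign pattern: a nonpositive entry at the column $R(i)$, nonnegative entries elsewhere, and row sum $0$. Put $\rho_{j,i}=1-(P_j)_{i,R(i)}\ge0$ for the mass leaving the $R$-position in row $i$. Then $\varepsilon_j=2\max_i\rho_{j,i}$. Rows $i\in I_{n,m}$ are Toeplitz, so $\rho_{j,i}$ and the whole row pattern of $\Delta_j$ are constant over the interior rows.

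The first step is to show $\varepsilon_j=O(\alpha_j)$ and to identify every limit $D=\lim\Delta_j/\varepsilon_j$ along a further subsequence.
\begin{itemize}
\item Expand the eigen-equation $(R+\Delta_j)v_j=\lambda_jv_j$. Project onto the $\zeta$-eigenspace of $R$, which is the space of vectors with $x_{R(i)}=\zeta x_i$ and has dimension $d$. Use the left eigenvectors of $R$ for this, namely the cycle-wise sums $\sum_k\zeta^{-k}x_{R^k(i)}$.
\item This gives the degenerate perturbation identity: the compression of $D$ to that eigenspace has $v$ as an eigenvector, with eigenvalue $\mu=\lim(\lambda_j-\zeta)/\varepsilon_j$.
\item Concretely, on each $R$-cycle $c$ one gets $\mu\zeta^{-1}$ equal to the $\mathcal A$-type average over the $q$ rows of $c$ of $\sum_k D_{ik}(v_k-v_{R(i)})\overline{v_{R(i)}}$.
\end{itemize}

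The second step is the geometric rigidity of each row. Every vertex $v_k$ of the regular $n$-gon other than $v_{R(i)}$ lies strictly inside the cone at the vertex $v_{R(i)}$ spanned by its two edges. Hence a nonnegative combination $\sum_kD_{ik}(v_k-v_{R(i)})$ points exactly along the clockwise edge only when all its mass sits on the clockwise neighbour $uv_{R(i)}=v_{S(i)}$. Otherwise its direction is strictly inside the cone.

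The limiting angle of $1-\overline\zeta\lambda_j$ is exactly the extreme edge angle $\pi/2-\pi/n$. Taking the real part against the inward normal of the clockwise edge therefore gives a nonnegative quantity whose cycle averages must vanish. So every row with $\rho_{j,i}$ of order $\varepsilon_j$ sends its escaping mass asymptotically to $S(i)$, and $D$ has the form $D_{ik}=f_i(\delta_{k,S(i)}-\delta_{k,R(i)})$ with $f\ge0$. The tangential component of the same identity gives, cycle by cycle, $(\mathcal Af)_c\,(u-1)\zeta=\mu$. Thus $\mathcal Af$ is constant, and it is positive because $\mu\neq0$ and $\lambda_j\ne\zeta$.

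The third step, which I expect to be the main obstacle, is upgrading constancy of cycle averages to constancy of $f$ itself. This matters because a priori a boundary row could carry a different rate, compensated within its cycle. The route I would try uses \eqref{eq:cycle-averaging}.
\begin{itemize}
\item Subtract the common mean from $f$. The result $f-\bar f\one$ is mean-zero and constant on $I_{n,m}$, since interior rows are Toeplitz.
\item Since $\mathcal A$ fixes $\one$, we have $\mathcal A(f-\bar f\one)=\mathcal Af-\bar f\one$. Because $\mathcal Af$ is constant, this is also mean-zero and constant, hence $0$.
\item Then \eqref{eq:cycle-averaging} forces $f=\bar f\one$.
\end{itemize}

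Hence $D=c(S-R)$. Matching eigenvalues gives $c\varepsilon_j(u-1)\zeta\sim\zeta(u-1)\alpha_j/q$, so $\varepsilon_j\asymp\alpha_j$ and $c\varepsilon_j=\alpha_j/q+o(\alpha_j)$. Since every subsequential limit is the same, \eqref{eq:first-order} follows.

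Some care is needed in the first step to control $v_j-v$, which enters at order $\varepsilon_j$. I would handle it by applying the left eigenvectors of $R$, which annihilate $(R-\zeta I)(v_j-v)$, so that the $v_j-v$ term drops out of the projected identity.
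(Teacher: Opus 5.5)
Your proposal is correct, but it takes a different route from the paper.

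\textbf{The paper's route.} It first proves the a priori bound $P_j-R=O(\alpha_j)$ on its own. It applies the exposing functionals $\phi_i$ to $P_j^qv_j=\lambda_j^qv_j$ and bounds $\sum_{k\ne i}(P_j^q)_{ik}$ below by $\eta_j(1-\eta_j)^{q-1}$. It then expands $P_j^q=I+\alpha_jG+o(\alpha_j)$, where the leading term cancels exactly, so no rate for $v_j\to v$ is needed. Reading $Gv=(K-I)v$ vertex by vertex gives $G=K-I$. Finally it recovers $A=\lim(P_j-R)/\alpha_j$ through $H=AR^{-1}$ and the non-cancellation of nonnegative off-diagonal entries in $\sum_kR^kHR^{-k}$.

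\textbf{Your route.} You normalise by $\varepsilon_j=\|P_j-R\|_\infty$ instead and project onto the $\zeta$-eigenspace of $R$ with its left eigenvectors. Write $\Pi$ for that spectral projection; then $Gx=q\zeta^{-1}\Pi Ax$ whenever $Rx=\zeta x$. So your cycle identity is exactly the paper's $Gv=(u-1)v$. The difference is that you apply the inward-normal argument directly to the individual row contributions of $D$ inside each cycle. This gives $D=\diag(f)(S-R)$ in one step and makes both $G=K-I$ and the $H$ step unnecessary. The comparability $\varepsilon_j\asymp\alpha_j$ then comes out of the limit identification rather than a separate path-counting estimate. The final appeal to \eqref{eq:cycle-averaging} is the same as in the paper. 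The paper's route gives a quantitative bound that does not depend on knowing the limit; yours is shorter and more uniform.

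\textbf{What needs repair.}
One justification is circular as written. You say $\mathcal Af$ is positive because $\mu\ne0$. But $\mu\ne0$ along every subsequence amounts to $\varepsilon_j=O(\alpha_j)$, which is what this step is meant to deliver, and $\lambda_j\ne\zeta$ does not imply it. The right reason is your own normalisation: $\|D\|_\infty=1$ gives $f\ge0$ and $f\ne0$, so the constant vector $\mathcal Af$ is positive. Then $\mu=(\mathcal Af)_c(u-1)\zeta\ne0$ follows.

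Two smaller points:
\begin{itemize}
\item You should note that $(\lambda_j-\zeta)/\varepsilon_j$ is bounded, so that $\mu$ exists along a subsequence. This follows from $\Pi\Delta_jv_j=(\lambda_j-\zeta)\Pi v_j$ together with $\Pi v=v\ne0$.
\item The two neighbours of $v_{R(i)}$ lie on the boundary rays of the cone, not strictly inside it. This is harmless, because the counterclockwise neighbour still lies strictly on the inner side of the clockwise edge line.
\end{itemize}
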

\begin{proof}
First put $\eta_j=\max_i(1-(P_j)_{i,R(i)})$, so
$P_j-R=O(\eta_j)$. The fixed real linear functionals
$\phi_i(z)=\operatorname{Re}(\overline v_i z)$ expose the individual
vertices of the limiting regular polygon. For large $j$,
$\phi_i(v_{j,i}-v_{j,k})\ge c>0$ whenever $k\ne i$.
Applying $\phi_i$ to $P_j^qv_j=\lambda_j^qv_j$ gives
\[
 c\sum_{k\ne i}(P_j^q)_{ik}
 \le\phi_i((1-\lambda_j^q)v_{j,i})=O(\alpha_j).
\]
A transition different from $R$, followed by $q-1$ transitions of $R$,
does not return to its starting state. In a row attaining $\eta_j$,
these paths have total probability at least
$\eta_j(1-\eta_j)^{q-1}$. Since $\eta_j\to0$, this proves
$P_j-R=O(\alpha_j)$.

Every subsequence now has a further subsequence on which
$(P_j-R)/\alpha_j$ converges to a matrix $A$. Expanding the $q$th
power gives
\[
 G=\sum_{k=0}^{q-1}R^kAR^{q-1-k},\qquad
 Gv=(u-1)v=(K-I)v.
\]
The off-diagonal entries of $G$ are nonnegative and its row sums
are zero. At vertex $v_i$, the chord $v_{K(i)}-v_i$ lies on the
clockwise polygon edge; all other chords point strictly inward
from its line. A nonnegative combination can give this chord only
by using that neighbour, with coefficient $1$. Hence $G=K-I$.

The matrix $H=AR^{-1}$ also has nonnegative off-diagonal entries
and zero row sums, and
\[
 \sum_{k=0}^{q-1}R^kHR^{-k}=K-I.
\]
Its off-diagonal entries cannot cancel, so
\[
 A=\diag(a)(S-R),\qquad a_i\ge0,\qquad
 \sum_{k=0}^{q-1}a_{R^k(i)}=1.
\]
The interior entries $A_{i,R(i)}=-a_i$ are equal because the
interior rows are cyclic shifts. Thus $a-q^{-1}\one$ is constant
on $I_{n,m}$, has mean zero, and has zero cycle average.
Equation~\eqref{eq:cycle-averaging} forces $a=q^{-1}\one$.
Every subsequential limit is therefore $(S-R)/q$.
\end{proof}

\begin{proof}[Completion of the proof of Theorem~\ref{thm:divisor}]
The matrix $S$ is also circulant. Indeed, \eqref{eq:first-order} shows
that $S(i)=i+h'\pmod n$ for one integer $h'$ on every interior row.
Commutation with $R$ extends this formula around each $R$-cycle,
which has at most one boundary state. Hence $K=R^{-1}S$ is a
circulant $n$-cycle. Rotate $v$ so that $v_0=1$; its coordinates
are then the distinct $n$th roots of unity.

Put $t_j=n^{-1}\sum_i(P_j)_{i,S(i)}$ and omit the subscript $j$.
Write
\begin{equation}
 P=R+t\diag(\one+F)(S-R)+tG,\qquad
 t\sim\alpha_j/q,\quad F\to0,\quad G\to0.
 \label{eq:local-decomposition}
\end{equation}
Here $F_i=P_{i,S(i)}/t-1$ is real, has mean zero, and is constant
on $I_{n,m}$. Outside the $R,S$ positions, $G_{ik}=P_{ik}/t\ge0$.
Its $S$ entries are zero, and its $R$ entry in each row is minus
the sum of that row's outside entries. Thus, with
\[
 \ell(G)=\sum_i\sum_{k\notin\{R(i),S(i)\}}G_{ik},
 \qquad \|G\|\le C\ell(G),
\]
the total outside mass controls $G$. All norms in this calculation
are Euclidean or operator norms, and all constants may depend on
the fixed limiting configuration.

Let $V=\diag(v)$, $\widehat G=\zeta^{-1}V^*GV$ and
$\lambda=\zeta(1+t\sigma)$, so $\sigma\to u-1$.
For a corresponding eigenvector $x\to v$, put
$\widehat x=V^*x=y+z$, with $y=\mathcal A\widehat x$ and
$z=(I-\mathcal A)\widehat x$. Normalize $y=\one+h$ with
$\one^*h=0$. Conjugating the eigenvector equation gives
\[
 [R+t\diag(\one+F)R(uK-I)+t\widehat G]\widehat x
       =(1+t\sigma)\widehat x.
\]
The operator $R-I$ is invertible on the orthogonal complement
of $\operatorname{Fix}(R)$, while $R(uK-I)$ preserves both
subspaces. The complementary equation therefore gives
$z=O(t(\|F\|+\|G\|))$. Averaging then yields
\begin{equation}
 \begin{gathered}
 \sigma y=D(uK-I)y+\mathcal A\widehat Gy+\varepsilon,\\
 D=\diag(\one+g),\quad g=\mathcal AF,\quad
 \|\varepsilon\|\le Ct(\|F\|+\|G\|)^2.
 \end{gathered}
 \label{eq:averaged-energy-equation}
\end{equation}
Here $\varepsilon=\mathcal A[\diag(F)R(uK-I)+\widehat G]z$;
the unperturbed operator preserves both subspaces, giving the
quadratic bound.

Projecting this equation onto mean-zero vectors gives
\[
 |u-1|\|g\|\le
 C\bigl(\|h\|+\|g\|\|h\|+\|G\|
              +t(\|F\|+\|G\|)^2\bigr).
\]
Since $\|g\|\ge\sqrt\kappa\|F\|$ by
\eqref{eq:cycle-averaging} and $t,F,G,h\to0$, absorption gives
\begin{equation}
 \|F\|\le C(\|h\|+\|G\|),\qquad
 \|\varepsilon\|\le Ct(\|h\|^2+\|G\|^2).
 \label{eq:energy-error}
\end{equation}

Put $\mathfrak n(w)=\operatorname{Im}(w/(u-1))$.
On $\operatorname{Fix}(R)$, the eigenvalues of $K$ are the $d$th
roots of unity. Thus $u\rho$ ranges over $n$th roots including $u$
but excluding $1$. All the other $u\rho$ lie strictly inward of
the line through $1$ and $u$. Normality of $K$ and $\one^*h=0$
therefore give a fixed $c>0$ such that
\[
 \mathfrak n\bigl(y^*(uK-I)y\bigr)\le-c\|h\|^2.
\]
Multiply \eqref{eq:averaged-energy-equation} by $y^*D^{-1}$;
$D$ is positive for large $j$. Since $D^{-1}y\in\operatorname{Fix}(R)$,
the projection on the $G$ term drops out. Each outside entry
$G_{ik}$ contributes
\[
 G_{ik}\frac{\overline y_i}{1+g_i}
 \bigl(\zeta^{-1}\overline v_i v_k y_k-y_i\bigr).
\]
At $y=\one,g=0$, its normal is strictly negative:
$\zeta^{-1}\overline v_i v_k$ is an $n$th root other than $1,u$.
There are finitely many such positions, so these normals stay
uniformly negative as $y\to\one$ and $g\to0$.
Using \eqref{eq:energy-error}, we obtain fixed $a,c,C>0$ with
\[
 \mathfrak n(\sigma)y^*D^{-1}y
 \le-c\|h\|^2-a\ell(G)
       +Ct\bigl(\|h\|^2+\ell(G)^2\bigr)\le0
\]
eventually. The factor $y^*D^{-1}y$ is positive, whereas the chosen
outside eigenvalues satisfy
$\operatorname{Im}((\lambda-\zeta)/(\zeta(u-1)))
=t\mathfrak n(\sigma)>0$. This contradiction proves
\eqref{eq:local-polygon}, since $H_n\subseteq\Om_{n,m}$.

It remains to show that each Karpelevi\v{c} arc meeting $\zeta$ leaves
the polygon immediately. For the other endpoint denominator $s$, expansion
of \eqref{eq:ito} gives
\[
 \begin{gathered}
 \lambda(\alpha)=\zeta(1+a\alpha+b\alpha^2+O(\alpha^3)),\\
 a=\frac{u-1}{q},\qquad
 \frac b{u-1}=\frac{u(n-s)}{dq^2}
             -\frac{(q-1)(u-1)}{2q^2}.
 \end{gathered}
\]
Consequently,
\[
 \operatorname{Im}\frac{\lambda(\alpha)-\zeta}{\zeta(u-1)}
 =\frac{\sin(2\pi/n)}{q^2}
  \left(\frac{q-1}{2}-\frac{n-s}{d}\right)\alpha^2
  +O(\alpha^3)>0
\]
for small positive $\alpha$. Indeed, $q+s>n$ gives $n-s\le q-1$,
so the bracket is at least $(q-1)(1/2-1/d)>0$ because $d\ge3$.
These arc points lie outside $H_n$ and hence, by the local equality,
outside $\Om_{n,m}$. Taking complex conjugates gives the arc on the
other side.
\end{proof}

\begin{proof}[Proof of the individual and arc assertions in Theorem~\ref{thm:main}]
\leavevmode\par
The upper bounds are given by \eqref{eq:upper}. For $q\ge2$, Proposition~
\ref{prop:nondivisor} gives the matching lower bound when $q$ does not
divide $n$, and Theorem~\ref{thm:divisor} gives it when $n=qd$ with
$d\ge3$. The Type~0 arcs already occur for $m=0$.

It remains to consider $n=2q$. The construction gives the whole arc
for $m=1$. Its other endpoint has denominator $q<s<n$: the Farey
relation excludes $s=2q$. Every proper divisor of $2q$ is at most $q$,
so $s$ does not divide $n$. This endpoint is therefore not an $n$th
root of unity and does not belong to $H_n=\Om_{n,0}$. Thus $m=0$
cannot contain the whole arc, and the smallest possible value is $m=1$.

The largest of these minimum values occurs at $q=2$ and equals
$\ceil{\floor{n/2}/2}=\ceil{(n-1)/4}$. Any region equal to $\Theta_n$
must contain every boundary arc. Conversely, the segment argument
following \eqref{eq:upper} fills the whole region from its boundary.
This proves \eqref{eq:individual-threshold}.

For $n=2$, every possible eigenvalue list is $\{1,t\}$ with
$-1\le t\le1$. It is the eigenvalue list of the stochastic circulant matrix
\[
 \frac12\begin{pmatrix}1+t&1-t\\1-t&1+t\end{pmatrix},
\]
so $m_2^*=0$. For $n=1$, the only stochastic matrix is $[1]$, and
$m_1^*=0$.
\end{proof}

\section{Complete spectra and the two-row problem}
\label{sec:joint}

To show that our family cannot reproduce every complete stochastic spectrum, we seek an eigenvalue list that occurs for a stochastic matrix of order $n$, but for no matrix in $\mathcal Q_{n,m}$. We record each characteristic polynomial by the ordered list of its coefficients, omitting the leading coefficient $1$:
\begin{equation}
\begin{gathered}
\det(zI-P)=z^n+c_{n-1}z^{n-1}+\cdots+c_0,\\
\operatorname{coeff}(P)=(c_{n-1},\ldots,c_0)\in\mathbb R^n.
\end{gathered}
\label{eq:coefficient-map}
\end{equation}
Each coefficient list determines the eigenvalues with multiplicity. The map $P\mapsto\operatorname{coeff}(P)$ is continuous, so the image of a compact matrix class is compact and hence closed. We first construct matrices from three-state cycles and at most two absorbing states. On each cycle, transitions from a state are possible only to itself or to the next state. The eigenvalues determine these matrices up to relabelling. For $n\ge4$, this lets us show that no matrix in our family with at least three interior rows has the same eigenvalues. Together with a separate argument at $n=3$, this settles the odd orders.

In even order, the remaining step is to find eigenvalue lists that cannot be reproduced by a matrix with two consecutive interior rows. In the general construction, we choose eigenvalues so that every stochastic matrix $P$ with those eigenvalues attains the smallest possible value of $\operatorname{tr}(P^4)$ for the given values of $\operatorname{tr}P$ and $\operatorname{tr}(P^2)$. This forces the states into pairs. Prescribed values of $\operatorname{tr}(P^k)$ for additional powers $k$ then restrict transitions between these pairs. Among matrices with this pattern, the eigenvalue lists compatible with two interior rows form a set of smaller dimension than all lists from the pattern. Adjoining a deterministic two-state cycle, which adds eigenvalues $1$ and $-1$, covers orders divisible by four; order four is handled separately.

\subsection{Three homogeneous rows}

For $n\ge3$, write $n=3k+r$ with $r\in\{0,1,2\}$, and define
\begin{equation}
 b_i=\frac{i}{100n^3}\quad(1\le i\le k),\qquad
 J_n=I_r\oplus\bigoplus_{i=1}^k
 \bigl[b_iI_3+(1-b_i)C_3\bigr].
 \label{eq:rigid-witness}
\end{equation}
Here $\oplus$ means that we combine the matrices by placing them along
the diagonal of a larger matrix and filling all entries between the
blocks with $0$. Thus there are no transitions between the blocks.
In block $i$, the transition probabilities from each state are $b_i$
to itself and $1-b_i$ to the next state in the three-state cycle.
The block $I_r$ describes $r$ absorbing states.
When $r=0$, this block is omitted.

\begin{theorem}
\label{thm:rigidity}
Every stochastic matrix with the same eigenvalues as $J_n$, counted with
multiplicity, can be transformed into $J_n$ by applying the same permutation
to its rows and columns.
\end{theorem}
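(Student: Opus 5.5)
The plan is to pass from the exact spectrum of $J_n$ to trace information, use it to locate $P$ near a relabelled copy of $J_n$, and then recover exact equality from the closed-class structure and the boundary position of the nonreal eigenvalues. First record the spectrum of $J_n$. Writing $\mu_i=b_i+(1-b_i)\omega_3$ with $\omega_3=e^{2\pi i/3}$, it consists of:
\begin{itemize}
\item the eigenvalue $1$ with multiplicity $r+k$;
\item the $2k$ nonreal eigenvalues $\mu_i,\overline{\mu_i}$, which are pairwise distinct because the $b_i$ are distinct.
\end{itemize}
Let $P$ be stochastic with this spectrum. From the spectrum alone one computes
\[
 \tr P=r+3\sum_i b_i,\qquad \tr(P^2)=r+3\sum_i b_i^2,\qquad \tr(P^3)=r+\sum_i\bigl(1+2\operatorname{Re}\mu_i^3\bigr),
\]
and $\tr(P^3)=n-O(\sum_i b_i)$. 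Since $b_i\le 1/(100n^2)$, each of these traces differs from its value for $I_r\oplus C_3^{\oplus k}$ by far less than $1/n$.

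These traces are sums of return probabilities, and each return probability lies in $[0,1]$. From $\tr(P^3)\ge n-\delta$ with $\delta$ tiny, every state returns to itself in three steps with probability at least $1-\delta$. The small value of $\tr P-r$ and of $\tr(P^2)-r$ then constrains how that return can happen. A state either has $P_{ii}$ close to $1$, or its three-step return must use a genuine $3$-cycle of large-probability transitions, since the $1$- and $2$-step contributions are almost all spent on the near-absorbing states. Counting, there are exactly $r$ states with $P_{ii}\approx1$. The others split into triples $\{x,y,z\}$ with $P_{xy},P_{yz},P_{zx}\ge1-O(n\delta)$. This shows that $P$ is entrywise within $O(n\delta)$ of a permuted copy of $I_r\oplus C_3^{\oplus k}$. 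After relabelling, assume this copy is the block-diagonal matrix itself.

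Next, upgrade from approximate to exact structure using the eigenvalue $1$. Its algebraic multiplicity $r+k$ equals the number of closed irreducible classes. Because $P$ is close to a matrix with exactly $r+k$ closed classes, namely the singletons and the triples, each closed class of $P$ must be contained in, and hence equal to, one of these $r+k$ groups, and there are exactly $r+k$ of them. So every state lies in a closed class, and there are no transient states and no transitions between groups. Then $P$ is block diagonal with $r$ blocks $[1]$ and $k$ stochastic $3\times3$ blocks $Q_1,\dots,Q_k$.

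Each $Q_\ell$ has eigenvalues $1,\nu_\ell,\overline{\nu_\ell}$. The multiset of these pairs must be $\{\mu_i,\overline{\mu_i}\}$, so after renumbering $Q_i$ has eigenvalue $\mu_i$. The point $\mu_i$ lies on the Type~0 edge of $H_3=\Theta_3$ joining $1$ to $\omega_3$. Apply the invariant-polygon relation \eqref{eq:polygon-invariant} to an eigenvector of $Q_i$. It shows that the coordinates form a triangle containing $0$. Since $\mu_i$ lies on the boundary of $H_3$, each row must be a convex combination of the row's own vertex and its clockwise neighbour only, with the same weights; this can alternatively be checked directly from $\tr Q_i=3b_i$ and the characteristic polynomial. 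That forces $Q_i=b_iI+(1-b_i)C_3$ up to the orientation of the cycle. A reversed orientation is a relabelling of the three states. This gives the permutation similarity.

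I expect the main obstacle to be the combinatorial trace step: deducing from the three trace values that the large entries form exactly $r$ loops and $k$ disjoint $3$-cycles. One must rule out configurations such as $2$-cycles, longer cycles, or mixtures that achieve near-certain three-step return, while keeping the $O(n\delta)$ bookkeeping consistent with the constant $1/(100n^3)$. I would first handle it by noting that a state with $P_{ii}\le1/2$ needs a three-step return path $i\to j\to l\to i$ with $j,l\ne i$ and $j\ne l$, because $\tr(P^2)$ leaves no room for $2$-cycles. I would then show these paths are essentially unique per state by summing the excess over all states.
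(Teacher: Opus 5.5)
Your route can be completed, but the step that carries all the weight is only asserted, so as written the proposal has a gap.

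\textbf{The approximation step.} The claim that the three traces force $P$ to be close to a relabelled $I_r\oplus C_3^{\oplus k}$ is not proved. The mechanism you sketch for it is circular. ``$\tr(P^2)$ leaves no room for $2$-cycles'' presupposes $\sum_iP_{ii}^2\approx r$, i.e.\ that the diagonal mass sits on $r$ near-absorbing states, which is what you are trying to show. A priori the diagonal mass $r+3\sum_ib_i$ could be spread thinly, and then $\tr(P^2)=r+3\sum_ib_i^2$ permits two-cycle mass of order $r$.

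To close the step, work from $(P^3)_{ii}\ge1-\delta$ alone, where $\delta=n-\tr(P^3)=9\sum_i(b_i-b_i^2)$ satisfies $n\delta\le1/100$. If $P_{ia}\ge1/n$, then $(P^2)_{ai}\ge1-n\delta$. The bound $(P^3)_{aa}\le(P^2)_{ai}P_{ia}+1-(P^2)_{ai}$ then gives $1-P_{ia}\le\delta/(1-n\delta)$. Since some entry of each row is at least $1/n$, each row has a unique dominant entry, which defines a map $\sigma$. The off-diagonal entries of each row of $P^3$ sum to at most $\delta$, so $\sigma^3=\mathrm{id}$. Finally, $\tr P=r+3\sum_ib_i$ shows that $\sigma$ has exactly $r$ fixed points.

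\textbf{The closed-class step.} Here the containment is reversed. Closeness does not stop small transitions from merging several groups into one class. What is true is that each $\sigma$-cycle lies in a single communicating class. Hence there are at most $r+k$ closed classes, and the multiplicity $r+k$ of the eigenvalue $1$ forces every $\sigma$-cycle to be a closed class.

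\textbf{The missing idea.} None of this perturbative work is needed, and the idea you are missing is the one the paper uses. Since $1$ is the only real eigenvalue of $J_n$ (in particular $0$ is not an eigenvalue), the Perron root of every irreducible diagonal block $B$ in the Frobenius normal form of $P$ is a real eigenvalue of $P$, hence equals $1$. A positive left Perron vector $y$ then satisfies $y^{\mathsf T}(\one-B\one)=0$, so $B\one=\one$ and every block is closed. Thus $P$ is block diagonal with $r+k$ irreducible stochastic blocks, with no approximation at all.

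A two-state block would contribute a second real eigenvalue, so every non-singleton block has at least three states. Counting states then gives at least $r$ singleton blocks, while $\tr P<r+1$ allows at most $r$. The other $k$ blocks therefore have order three.

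On such a block $D$, drop the invariant-polygon argument: it does not by itself give equal weights, and $\Theta_3\ne H_3$. Instead, $\tr D=3b_i$ and $\tr D^2=3b_i^2$ give $\sum_j(D_{jj}-b_i)^2+2\sum_{j<\ell}D_{j\ell}D_{\ell j}=0$. So every diagonal entry is $b_i$ and no two-cycle is positive, which on three states forces $D=b_iI+(1-b_i)C_3$ after relabelling. This is the ``direct check'' you mention as a fallback. The whole proof uses the size of the $b_i$ only through $3\sum_ib_i<1$.
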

\begin{proof}
Let $P$ have the same eigenvalues as $J_n$, and put
$\delta=3\sum_i b_i<1$. Its only real eigenvalue is $1$.
Each irreducible diagonal block $B$ in Frobenius normal form therefore
has Perron eigenvalue $1$. It is substochastic; if $y>0$ is a left
Perron vector, then
\[
 y^{\mathsf T}(\one-B\one)=0.
\]
The vector in parentheses is nonnegative, so $B\one=\one$.
Every block is thus closed, and $P$ is block diagonal after relabelling.

Each block contributes one simple eigenvalue $1$, so there are $k+r$
blocks. A nonsingleton block has at least three states: a real
two-state stochastic block would contribute another real eigenvalue.
If $f$ blocks are singletons, counting states and taking the trace gives
\[
 3k+r\ge f+3(k+r-f),\qquad
 f\le\tr P=r+\delta<r+1.
\]
Hence $f=r$, and the other $k$ blocks all have order three.

Each three-state block $D$ has eigenvalue $1$ and one of the distinct
conjugate pairs
\[
 b_i+(1-b_i)\omega,\qquad b_i+(1-b_i)\overline\omega,
 \qquad \omega=e^{2\pi i/3}.
\]
Thus $\tr D=3b_i$ and $\tr D^2=3b_i^2$, giving
\[
 0=\sum_{j=1}^3(D_{jj}-b_i)^2
       +2\sum_{j<\ell}D_{j\ell}D_{\ell j}.
\]
All diagonal entries equal $b_i$, and opposite off-diagonal entries
cannot both be positive. Each row has off-diagonal sum $1-b_i>0$.
On three states, this forces a directed three-cycle with probability
$1-b_i$ on every transition. Hence
$D=b_iI_3+(1-b_i)C_3$ after relabelling, which proves the assertion.
\end{proof}

\begin{corollary}
\label{cor:three-rows}
If $n\ge4$ and $n-2m\ge3$, no matrix in $\Q_{n,m}$ has the same
eigenvalues as $J_n$, counted with multiplicity. Consequently,
\[
 j_n^*=\frac{n-1}{2}\quad\text{for odd }n\ge3,
 \qquad
 \frac n2-1\le j_n^*\le\frac n2\quad\text{for even }n\ge4.
\]
\end{corollary}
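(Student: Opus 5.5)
By Theorem~\ref{thm:rigidity}, it is enough to show that no relabelling of $J_n$ lies in $\Q_{n,m}$ when $n-2m\ge3$. Relabellings are allowed here because the class $\Q_{n,m}$ is not invariant under permutations of the states. Suppose $P\in\Q_{n,m}$ has the same spectrum as $J_n$. Then $P=\Pi J_n\Pi^{\mathsf T}$ for some permutation matrix $\Pi$. So $P$ is a direct sum of $r$ absorbing singletons and $k$ three-state blocks, and each three-state row has diagonal entry $b_i$ and one off-diagonal entry $1-b_i$. The entries $b_i$ are pairwise distinct.

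The plan is to examine three consecutive interior rows $i,i+1,i+2$, which exist because $n-2m\ge3$. By Proposition~\ref{prop:completion}, these rows are successive cyclic shifts of one probability vector $c$. In particular, they share the diagonal value $c_0$ and the same multiset of entries.

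First case: $c_0=1$. Then the three interior rows are absorbing singletons. This is impossible when $r=0$. More generally, I expect to exclude it for the rows actually present as follows. If $c_0=1$, every interior row is absorbing, giving $n-2m\ge3>r$ singletons, which contradicts $f=r$.

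Second case: $c_0=b_i<1$ for a single index $i$, since the $b$'s are distinct and all lie below $1$. Then rows $i,i+1,i+2$ all belong to three-state blocks with the same parameter $b_i$. Only one block has that parameter, so the three rows form exactly that block. The positive off-diagonal entry of each row is $1-b_i$, located at the same offset $h$ with $h\not\equiv0$, because $c$ is shared. The block's cycle then sends $i\mapsto i+h$, $i+1\mapsto i+1+h$ and $i+2\mapsto i+2+h$, all modulo $n$, and these targets must lie in $\{i,i+1,i+2\}$. So $h\equiv1$ or $h\equiv2$ in the appropriate rows. Checking this: $i+h\in\{i+1,i+2\}$ and $i+2+h\in\{i,i+1\}$ require $h\equiv1,2$ and $h\equiv-2,-1\pmod n$ simultaneously. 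This forces $n\mid 3$ or $n\mid 4$ or a similar small-modulus condition. Hence $n\le4$, and the case $n=4$ needs a direct check. This modular bookkeeping is the main obstacle. I expect it to give a contradiction for $n\ge4$: for $n=4$, the options $h=1$ with $h\equiv-1$, or $h=2$ with $h\equiv-2$, are handled by noting that the third row's target must also be a valid $3$-cycle.

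For the consequences, take odd $n\ge5$ and $m=(n-3)/2$. Then $n-2m=3$, so $j_n^*\ge(n-1)/2=\floor{n/2}$. The matching upper bound holds because $\Q_{n,\floor{n/2}}$ contains all stochastic matrices. For $n=3$ with $m=0$, I would argue separately: the spectrum $\{1,-1/2,-1/2\}$ of $(J-I/3)\cdot\tfrac32$ is repeated-real and occurs, but no circulant of order $3$ has a repeated eigenvalue $-1/2$, because its nontrivial eigenvalues are conjugate with real part $(3c_0-1)/2$. This gives $j_3^*=1$. For even $n\ge4$, take $m=n/2-2$, so $n-2m=4\ge3$. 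This gives $j_n^*\ge n/2-1$, and the upper bound is again trivial.
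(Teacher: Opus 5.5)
Your argument for $n\ge4$ is essentially the paper's. Three interior rows sharing the diagonal value $b_\ell$ must be exactly the states of one closed block, with a common displacement $h\not\equiv0$, and three consecutive labels cannot be invariant under $x\mapsto x+h$. Your modular check does close. For $n\ge5$ the sets $\{1,2\}$ and $\{-1,-2\}$ are disjoint modulo $n$. For $n=4$ the only option is $h=2$, and this sends the middle label $i+1$ to $i+3$, outside the block; it is the middle row, not the third, that fails. The paper's observation is cleaner: a proper cyclic interval has a unique element whose predecessor lies outside it, so no nontrivial translation preserves it. Your deductions for odd $n\ge5$ and even $n\ge4$ are correct.

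The genuine error is the case $n=3$. Your witness $\tfrac32(J-I/3)$ has zero diagonal and off-diagonal entries $1/2$, so it is itself the stochastic circulant $\tfrac12(C_3+C_3^2)$, with $c=(0,\tfrac12,\tfrac12)$. Take $\omega=e^{2\pi i/3}$. The eigenvalue $c_0+c_1\omega+c_2\bar\omega$ has real part $(3c_0-1)/2=-1/2$ and imaginary part $\tfrac{\sqrt3}{2}(c_1-c_2)=0$. It therefore coincides with its conjugate, so $\{1,-1/2,-1/2\}\in\Sp_{3,0}$. Your claim that no order-three circulant has a repeated eigenvalue $-1/2$ is false, because a conjugate pair is repeated exactly when it is real.

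You need a list whose two non-Perron eigenvalues are distinct reals, which a conjugate pair can never be. The paper uses $\{1,1,-1\}$, realised by $I_1\oplus C_2$: one absorbing state and a deterministic two-state cycle. With this witness, $j_3^*\ge1$, and hence $j_3^*=1$. Note also that $J_3$ is itself circulant, which is why the first assertion of the corollary requires $n\ge4$.
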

\begin{proof}
Suppose that $P\in\Q_{n,m}$ has the same eigenvalues as $J_n$, counted
with multiplicity. Its interior rows have equal diagonal entries because
they are cyclic shifts of rows of one circulant matrix. By
Theorem~\ref{thm:rigidity}, each value $b_i$ occurs on the diagonal
exactly three times, and $1$ occurs at most twice. There can therefore be
at most three interior rows.

If there are three, their states must form one of the closed three-state
cycles. The circulant matrix determined by these rows is then
\[
 C=b_\ell I+(1-b_\ell)C_n^s
\]
for some $\ell$ and $1\le s<n$. Thus, from each interior state $i$,
the chain either stays at $i$ or moves to $i+s$ modulo $n$. Since no
transition leaves the three states, adding $s$ modulo $n$ must preserve
\[
 I_{n,m}=\{m,m+1,\ldots,n-m-1\}.
\]
This is a nonempty proper interval in the cyclic order. Its first state,
$m$, is the only state in the interval whose predecessor lies outside it.
Any cyclic shift preserving the interval must therefore send $m$ to
itself. This requires $s=0$ modulo $n$, a contradiction.

For odd $n\ge5$, every $m<(n-1)/2$ leaves at least three interior
rows and is therefore excluded. At $m=(n-1)/2$, only one interior row
remains, so every stochastic matrix belongs to the family. For $n=3$,
one fixed state and a deterministic two-state cycle give the eigenvalues
$\{1,1,-1\}$. In a stochastic circulant matrix of order three, one
eigenvalue is $1$ and the other two are complex conjugates. They cannot
be the distinct real numbers $1$ and $-1$. Thus $j_3^*=1$ as well.

For even $n$, every $m<n/2-1$ leaves at least four interior rows and
is excluded, while $m=n/2$ leaves none and imposes no restriction.
This gives $n/2-1\le j_n^*\le n/2$.
\end{proof}

For even $n\ge4$, the matrices $J_n$ can nevertheless be relabelled into
$\Q_{n,n/2-1}$. Place three states of one block consecutively, with the
first two at the interior labels. Their rows agree with those of
$b_iI+(1-b_i)C_n$. Thus the three-row obstruction does not settle the
two-row problem.

\subsection{Two homogeneous rows in even order}

\begin{proposition}
\label{prop:four}
For $n=4$, we have $j_4^*=2$.
\end{proposition}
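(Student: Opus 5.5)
We must show $j_4^*=2$. The upper bound $j_4^*\le2$ holds because $\Q_{4,2}$ contains every stochastic matrix of order $4$. By Corollary~\ref{cor:three-rows}, $j_4^*\ge1$. It therefore remains to exhibit one stochastic spectrum that no matrix in $\Q_{4,1}$ realises. In $\Q_{4,1}$ the interior rows are $1$ and $2$, and row $2$ is row $1$ shifted cyclically one place to the right. In particular these two rows have equal diagonal entries. Following the strategy of Theorem~\ref{thm:rigidity}, I would choose a spectrum that forces enough structure on every realisation that this cannot happen. The witness I would use is
\[
 P_\ast=C_2\oplus\begin{pmatrix}x&1-x\\1-y&y\end{pmatrix},\qquad \spec(P_\ast)=\{1,1,-1,\mu\},\quad \mu=\tfrac12,
\]
so $x+y=1+\mu=3/2$. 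Here $C_2$ is the deterministic two-state cycle.

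\textbf{Step 1: structure of every realisation.} Let $P$ be stochastic with spectrum $\{1,1,-1,\tfrac12\}$.

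\emph{Closed classes.} Eigenvalues of $P$ equal to $1$ arise only from closed irreducible classes, each contributing $1$ exactly once. This holds because an irreducible substochastic block that is not closed has spectral radius less than $1$. Hence $P$ has exactly two closed classes.

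\emph{The class carrying $-1$.} The eigenvalue $-1$ forces one closed class, call it $X$, of even period. The spectrum of $X$ is then invariant under $z\mapsto -z$.

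\emph{Eliminating the other configurations.} With two closed classes in four states, the remaining options are excluded as follows.
\begin{itemize}
\item $X$ cannot have $4$ states, since a second closed class is still needed.
\item $X$ cannot have $3$ states. Its spectrum would be $\{1,-1,0\}$ or $\{1,-1,\tfrac12\}$: the first needs an eigenvalue $0$, which is not available, and the second is not symmetric.
\end{itemize}
So $X$ has two states. A two-state class of period $2$ is a deterministic $2$-cycle, so its diagonal entries are $0$.

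\emph{The remaining two states.} Either they form a second closed two-state class, or they are a singleton closed class plus one transient state.
\begin{itemize}
\item In the first case, the second class has trace $1+\mu=3/2$. Its diagonal entries $x,y\le1$ therefore satisfy $x,y\ge\mu>0$. Also $x,y<1$, since $x=1$ would make that class reducible.
\item In the second case, the singleton has diagonal entry $1$ and the transient state has diagonal entry $\mu=\tfrac12$.
\end{itemize}
In both cases the states of the $2$-cycle are the only states with diagonal entry $0$. Moreover, two states with equal diagonal entries either both belong to the $2$-cycle or both belong to the same lazy class.

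\textbf{Step 2: excluding $\Q_{4,1}$.} Suppose $P\in\Q_{4,1}$. Rows $1$ and $2$ have equal diagonal entries, so by Step 1 the states $1,2$ lie together either in the deterministic $2$-cycle or in the lazy class $\{1,2\}$.

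\emph{First case.} Row $1$ is $e_2$, and its shift is $e_3$. But row $2$ must be $e_1$, a contradiction.

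\emph{Second case.} Row $1$ is $x e_1+(1-x)e_2$. Its right shift is $xe_2+(1-x)e_3$, which must equal row $2$, namely $(1-y)e_1+ye_2$. Comparing the coefficients of $e_3$ forces $x=1$, contradicting $x<1$.

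Hence no matrix in $\Q_{4,1}$ has this spectrum. Combined with the upper bound, this gives $j_4^*=2$.

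\textbf{Main obstacle.} The main difficulty is Step 1: ruling out every other block structure consistent with the spectrum. A first attempt with spectrum $\{1,1,-1,-1\}$ fails, because $C_4^2$ is circulant. A version with $\mu<0$ also fails: a lazy two-state block can then have a zero diagonal entry, which lets one interior row sit in each block. The choice $\mu>0$ is exactly what forces $x,y>0$ and separates the diagonal values. I would check this case analysis carefully, including the transient-state configuration, since the whole argument rests on the diagonal entries distinguishing the blocks.
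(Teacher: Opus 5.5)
Your proof is correct, but it uses a different witness from the paper's.

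The paper takes the spectrum $\{1,-1,a,b\}$ with $0<b<a<1$ and $3a-b>2$. Because $1$ is simple there, the deterministic two-state cycle is the only closed class, and the other two states are transient with positive diagonal entries. The case in which rows $1$ and $2$ are these transient states is excluded quantitatively: the shift identifies $P_{10}$ with $P_{21}=y$, and then $1\ge c+x+y\ge\frac{a+b}2+2\sqrt{xy}=\frac{3a-b}2>1$.

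Your witness $\{1,1,-1,\tfrac12\}$ is realised by $C_2$ placed along the diagonal with an aperiodic two-state chain, so it has two closed classes. In the corresponding case the lazy pair $\{1,2\}$ is closed, yet the right shift of row $1$ puts mass $1-x>0$ on state $3$. The exclusion therefore becomes purely combinatorial and needs no inequality. The cost is a slightly longer block-structure analysis: you must rule out a three-state class of even period and handle an absorbing state plus a transient state. You carry out both correctly.

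The paper's witness has four distinct eigenvalues. Yours has a repeated eigenvalue $1$, so its coefficient vector lies on the boundary of the set of coefficient vectors of stochastic matrices. This is harmless, since Corollary~\ref{cor:positive-joint-witnesses} only uses that a witness lies outside the closed image of $\Q_{4,1}$.

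Your construction is in fact the $q=1$ analogue of the paper's order-$(2q+2)$ witnesses, which also adjoin $C_2$ to a lazy irreducible block. There the rows of that block are separated by their profiles. In order four you rightly use closedness instead, since $x=y=3/4$ gives two rows with equal profiles.
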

\begin{proof}
Choose $0<b<a<1$ with $3a-b>2$. The stochastic matrix
\[
 \begin{pmatrix}
  0&1&0&0\\
  1&0&0&0\\
  1-a&0&a&0\\
  1-b&0&0&b
 \end{pmatrix}
\]
has eigenvalues $\{1,-1,a,b\}$.

In any stochastic matrix with these eigenvalues, the eigenvalue $-1$
comes from a closed communicating class of states with even period.
A class of period four would also contribute
$i$ and $-i$. A class of period two splits into two groups, with
transitions only from one group to the other. On three states, such a
class would contribute the eigenvalue $0$; on four states, its other two
eigenvalues would have opposite signs. These possibilities conflict with
$a,b>0$.
The class must therefore consist of two states, with transition
probability $1$ from either state to the other. Since the eigenvalue $1$ has multiplicity one,
this is the only closed communicating class.

The submatrix on the other two states has the form
\[
 T=\begin{pmatrix}d_1&x\\y&d_2\end{pmatrix},\qquad
 d_1+d_2=a+b,\quad d_1d_2-xy=ab>0.
\]
Both $d_1$ and $d_2$ are positive, whereas the two cycle states have
diagonal entry $0$.

Suppose now that a matrix in $\Q_{4,1}$ has these eigenvalues. Its
interior rows, labelled $1$ and $2$, are cyclic shifts of one another
and have equal diagonal entries. Their states must therefore either
both belong to the two-state cycle or both be the other two states.
In the first case, row $1$ has its entry $1$ in column $2$, and row $2$
has its entry $1$ in column $1$. Shifting row $1$ one column forward
would instead put this entry in column $3$, a contradiction.

In the second case, $d_1=d_2=c=(a+b)/2$ and $xy=(a-b)^2/4$.
Row $1$ has entries $c$ and $x$ in columns $1$ and $2$. Its entry in
column $0$ equals the entry $y$ of row $2$ in column $1$, because the
rows are cyclic shifts. Since row $1$ sums to $1$,
\[
 1\ge c+x+y\ge\frac{a+b}{2}+2\sqrt{xy}
 =\frac{3a-b}{2}>1,
\]
again a contradiction. Thus $m=1$ cannot reproduce this eigenvalue
list, whereas $m=2$ imposes no restriction on the rows.
\end{proof}

For the remaining even orders, we seek complete eigenvalue lists of stochastic matrices
that no matrix in $\Q_{n,n/2-1}$ can reproduce.
We construct eigenvalue lists for which the traces of matrix powers
restrict the positions of positive entries in every nonnegative matrix
with those eigenvalues. Within this pattern, the lists compatible with
two rows that have equal diagonal entries and the same multiset of
entries form a set of smaller dimension than the full set of eigenvalue
lists. This leaves lists that our family cannot reproduce.

We call a row's diagonal entry, together with the multiset of all its
entries, its \emph{row profile}. The multiset records each entry with
its repetitions. Rows of one circulant matrix have the same row profile,
so matching row profiles is a necessary condition for obtaining the second interior row by shifting the first cyclically one position to the right. To compare
dimensions, we represent each eigenvalue list by the coefficients of its
monic characteristic polynomial.

\begin{lemma}
\label{lem:paired-trace}
Let $n=2q$, $b>0$, and $0<\delta\le b/(100n^2)$. Put $w=b^2$.
Suppose a nonnegative matrix $A$ of order $n$ satisfies
\begin{equation}
 \tr A=q\delta,\qquad
 \tr A^2=q(\delta^2+2w),\qquad
 \tr A^4=q(\delta^4+4w\delta^2+2w^2).
 \label{eq:paired-traces}
\end{equation}
Then the $n$ row and column indices can be partitioned into $q$ pairs.
For each pair $\{i,j\}$, put $d=A_{ii}$ and $e=A_{jj}$. We have
\begin{equation}
 d+e=\delta,\qquad A_{ij}A_{ji}=w+de.
 \label{eq:matched-pair}
\end{equation}
If $i$ and $j$ belong to different pairs, then $A_{ij}A_{ji}=0$.
Draw an arrow $i\to j$ whenever $A_{ij}>0$. There is no directed
cycle through four distinct indices. A directed cycle through three
distinct indices can occur only when all three corresponding diagonal
entries are $0$.
\end{lemma}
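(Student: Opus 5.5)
The plan is to prove everything from a single lower bound for $\tr A^4=\|A^2\|_F^2=\sum_{i,k}(A^2)_{ik}(A^2)_{ki}$. We then show that the prescribed value in \eqref{eq:paired-traces} is exactly this bound, so every inequality in the chain must be an equality. Write $d_i=A_{ii}$, $p_{ij}=A_{ij}A_{ji}$ for $i\ne j$, and $P_i=\sum_{j\ne i}p_{ij}$. Then
\[
 (A^2)_{ii}=d_i^2+P_i,\qquad
 \tr A=\sum_i d_i=q\delta,\qquad
 \tr A^2=\sum_i(d_i^2+P_i)=q(\delta^2+2w).
\]
Since $A\ge0$, all terms of $\tr A^4$ are nonnegative. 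For $i\ne k$ we also have $(A^2)_{ik}\ge A_{ik}(d_i+d_k)$, because the two-step walks $i\to i\to k$ and $i\to k\to k$ contribute to it. Keeping only the diagonal terms and these two-step contributions gives
\begin{equation*}
 \tr A^4\ \ge\ \sum_i(d_i^2+P_i)^2+\sum_{i\ne k}p_{ik}(d_i+d_k)^2 .
\end{equation*}
The dropped part is a sum of nonnegative terms. It contains three kinds of walk contributions: the cross terms in $(A^2)_{ik}(A^2)_{ki}$ involving a walk $i\to j\to k$ through a third index, closed walks around directed $3$-cycles and $4$-cycles, and products $A_{ij}A_{jk}A_{kj}A_{ji}$ with $i\ne k$ through a common vertex $j$.

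The next step is to minimise the right-hand side under the two trace constraints. For fixed diagonal, Cauchy--Schwarz on $\sum_i(d_i^2+P_i)$ suggests spreading $P_i$ evenly, but this must be traded against the second sum and against the $d_i$. The model extremiser is the paired configuration: each pair has $d+e=\delta$ and $p=w+de$, so $P_i=P_j=w+de$. Along the model the right-hand side evaluates to
\[
 \sum_{\text{pairs}}\bigl[(d^2+w+de)^2+(e^2+w+de)^2+2(w+de)\delta^2\bigr].
\]
I will check that this sum equals $q(\delta^4+4w\delta^2+2w^2)$ identically in $d$, using $d+e=\delta$. For a general $A$, I plan to substitute $P_i=w_i'$ and treat the expression as a quadratic form in the $P_i$ and the $p_{ik}$, with the $d_i$ as small parameters. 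The condition $\delta\le b/(100n^2)$ makes all diagonal-dependent terms of relative size $O(\delta/b)$, so the leading behaviour is governed by $w$ alone.

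The obstacle I expect is this optimisation step: showing that the minimum over all admissible $(d_i,p_{ik})$ is attained only at the paired configuration with each $P_i$ concentrated on one partner. Spreading one vertex's mass over two partners lowers nothing in the retained terms. However, the dropped term $\sum_j\sum_{i\ne k}p_{ij}p_{kj}$ from walks $i\to j\to k$ then becomes positive, of size comparable to $w^2$. This dwarfs any gain of order $\delta^2w$ available from redistributing diagonal mass. So I will keep that term in the bound, use it to force each index to have at most one partner with $p_{ij}>0$, and use the constraints to pin down $d+e=\delta$ and $p=w+de$ on each pair. I would first try a Lagrangian/convexity argument in the variables $P_i$ with the diagonal frozen, then a perturbative estimate in $\delta$.

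Once equality is established, the structural conclusions follow from the dropped terms all vanishing. Vanishing of the cross products through common vertices gives $A_{ij}A_{ji}=0$ between different pairs, and vanishing of the closed $4$-walks around directed $4$-cycles excludes those cycles. Suppose a directed $3$-cycle $i\to j\to k\to i$ had some positive diagonal entry, say $d_i>0$. Then the walk $i\to i\to j\to k\to i$ contributes a positive amount to $\tr A^4$ that is absent from the bound, which is again a contradiction. Hence $3$-cycles occur only when all three diagonal entries are $0$.
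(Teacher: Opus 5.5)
\textbf{Verdict: genuine gap.} The missing piece is the step you flag as the obstacle, and it carries the whole lemma. Checking that the paired configuration gives your bound the value $q(\delta^4+4w\delta^2+2w^2)$ only shows that the constrained minimum of the bound is \emph{at most} this value. To force equality you need the reverse inequality, with the paired configurations as the only minimisers, and the proposal does not prove it.

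\textbf{The displayed bound is too weak.} Take $n=4$, $d=(\delta,0,\delta,0)$, $p_{01}=p_{12}=p_{23}=p_{30}=(w-\delta^2)/2$, $p_{13}=2\delta^2$ and $p_{02}=0$. These data satisfy both trace constraints, yet the right-hand side equals $q(\delta^4+4w\delta^2+2w^2)-4\delta^4$. So the term $X=\sum_j\sum_{i\ne k}p_{ji}p_{jk}$ must be kept. Call the resulting bound $F(d,p)$.

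\textbf{The suggested tools do not work as stated.} In the edge variables, $\sum_jP_j^2+X=2p^{\mathsf T}(I+A_L)p$, where $A_L$ is the line-graph adjacency. This form is copositive but not convex, since $A_L$ already has eigenvalue $-2$ for $K_4$. A Lagrangian/convexity argument therefore does not locate global minimisers. A perturbation in $\delta$ also cannot by itself give the exact relations \eqref{eq:matched-pair}.

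\textbf{How your route could be completed.} It needs two pieces you do not supply.
\begin{itemize}
\item \emph{Reduction to matchings.} Put $s_i=d_i^2+P_i$. From $F\le q(\delta^4+4w\delta^2+2w^2)$ and Cauchy--Schwarz, both $X$ and $\sum_i(s_i-\bar s)^2$ are at most $q(2w\delta^2+\delta^4/2)$. Hence every index has a dominant partner. Moving the remaining mass onto these matching edges lowers $X$ by roughly $w$ per unit moved. It changes the other terms by $O(b\delta\sqrt q+q^2\delta^2)$ per unit, and this is where $\delta\le b/(100n^2)$ enters.
\item \emph{An exact scalar inequality.} On matched configurations $F$ is the sum of fourth powers of the eigenvalues of the $2\times2$ blocks. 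You then need: real $\mu_1,\dots,\mu_n$ with $\sum\mu_i=q\delta$ and $\sum\mu_i^2=q(\delta^2+2w)$ satisfy $\sum\mu_i^4\ge q(\delta^4+4w\delta^2+2w^2)$. Equality holds only for $q$ copies each of $u$ and $-v$, where $u-v=\delta$ and $uv=w$.
\end{itemize}

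\textbf{How the paper does it.} The scalar inequality is the key idea your proposal lacks, and the paper uses it to avoid the combinatorial optimisation entirely.
\begin{itemize}
\item It sets $B_{ij}=\sqrt{A_{ij}A_{ji}}$. Then $\tr B=\tr A$ and $\tr B^2=\tr A^2$. AM--GM applied to each closed $4$-walk and its reverse gives $\tr B^4\le\tr A^4$.
\item Since $B$ is symmetric, its eigenvalues are real. The scalar inequality then forces equality and the spectrum $u,-v$, each with multiplicity $q$.
\item The scalar inequality itself is proved as follows. A minimiser has every $|\mu_i|$ within $b/50$ of $b$ and exactly $q$ positive entries. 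The Lagrange equation $4\mu^3-2\eta\mu-\xi=0$ has no quadratic term, so three distinct values would sum to $0$, which is impossible.
\item On each component of $B$ the Perron root is $u$ and every other eigenvalue is $-v$. Nonnegativity of the trace, $u-(r-1)v\ge0$ with $u/v<2$, limits components to at most two indices.
\item This gives the $q$ pairs and $A_{ij}A_{ji}=0$ across pairs. The trace and determinant of each block give \eqref{eq:matched-pair}.
\end{itemize}
Your final paragraph then coincides with the paper's last step. Closed walks within single pairs already give $q(u^4+v^4)=\tr A^4$, so every other closed $4$-walk has weight $0$.
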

\begin{proof}
Choose $u,v>0$ such that $u-v=\delta$ and $uv=b^2$. First we show
that real numbers $\mu_1,\ldots,\mu_n$ satisfying
\[
 \sum_i\mu_i=q\delta,\qquad
 \sum_i\mu_i^2=q(\delta^2+2b^2)
\]
obey
\begin{equation}
 \sum_i\mu_i^4\ge q(\delta^4+4b^2\delta^2+2b^4),
 \label{eq:scalar-fourth}
\end{equation}
with equality precisely when the list consists of $q$ copies of $u$
and $q$ copies of $-v$.

Minimise $\sum_i\mu_i^4$ subject to the two constraints. The feasible
set is compact and contains the proposed list. Comparing a minimiser
with that list gives
\[
 \sum_i\left(\mu_i^2-b^2-\frac{\delta^2}{2}\right)^2
 \le n\delta^2\left(b^2+\frac{\delta^2}{4}\right).
\]
In particular, each $\mu_i$ satisfies
$\bigl||\mu_i|-b\bigr|\le2\sqrt n\,\delta<b/50$.
Let $n_+$ and $n_-$ count the positive and negative entries. Then
\[
 |(n_+-n_-)b|\le n\delta/2+2n\sqrt n\,\delta<b,
\]
so $n_+=n_-=q$. The entries are not all equal, so the gradients of
the two constraints are independent. Lagrange multipliers give
\[
 4\mu_i^3-2\eta\mu_i-\xi=0
\]
for every $i$. If three distinct values occurred, they would be the
three roots of this cubic and would sum to $0$. But three numbers each
within $b/50$ of either $b$ or $-b$ cannot sum to $0$. Thus there are
exactly two values, one positive and one negative, each occurring $q$
times. From the constraints, it follows that they are $u$ and $-v$,
proving \eqref{eq:scalar-fourth}.

Now define the symmetric nonnegative matrix
\[
 B_{ij}=\sqrt{A_{ij}A_{ji}}.
\]
We have $\tr B=\tr A$ and $\tr B^2=\tr A^2$, so the real
eigenvalues of $B$ satisfy the two constraints above. For a closed walk
$i\to j\to k\to\ell\to i$, write
\[
 p=A_{ij}A_{jk}A_{k\ell}A_{\ell i},\qquad
 p_{\mathrm{rev}}=A_{i\ell}A_{\ell k}A_{kj}A_{ji}
\]
for the products along the walk and its reverse. The corresponding
product for $B$ satisfies the AM--GM inequality
\[
 B_{ij}B_{jk}B_{k\ell}B_{\ell i}
 =\sqrt{p\,p_{\mathrm{rev}}}
 \le\frac{p+p_{\mathrm{rev}}}{2}.
\]
Summing over all ordered quadruples $(i,j,k,\ell)$ gives
$\tr B^4\le\tr A^4$, since reversing the walks leaves the sum
of the products for $A$ unchanged. By \eqref{eq:paired-traces},
the right-hand side equals the lower bound in
\eqref{eq:scalar-fourth}. We therefore have equality, and the eigenvalues
of $B$ are $u$ and $-v$, each with multiplicity $q$. Every connected component of $B$ has simple Perron eigenvalue $u$;
all its remaining eigenvalues are $-v$. If its order is $r$, its
nonnegative trace gives
\[
 0\le u-(r-1)v,\qquad r\le1+u/v<3.
\]
Thus every component has one or two indices. Each two-index component
contributes one eigenvalue $-v$; the $q$ negative eigenvalues and $2q$
indices force exactly $q$ pairs.
Since $B_{ii}=A_{ii}$ and
$B_{ij}^2=A_{ij}A_{ji}$, the trace and determinant on each pair give
\eqref{eq:matched-pair}. Between different pairs, $B_{ij}=0$ and hence
$A_{ij}A_{ji}=0$.

Each closed walk contributing to $\tr A^4$ contributes the product
of its four matrix entries. Walks staying within a single pair already
contribute $q(u^4+v^4)$, the whole prescribed trace. Note that a directed cycle
through four distinct indices would add a positive contribution, as
would a directed cycle through three distinct indices followed by a
loop with positive diagonal entry. Therefore both are excluded.
\end{proof}

\begin{lemma}
\label{lem:paired-ladder}
Let $q=3$ or $q\ge5$, put $n=2q$, and use the parameters of
Lemma~\ref{lem:paired-trace}. Define $u,v>0$ by
$u-v=\delta$ and $uv=w$, and put
\[
 a_k=q(u^k+(-v)^k),\qquad R=1-\delta.
\]
Assume $w<R$. For a stochastic matrix $P$ of order $n$, write
$s_k=\tr(P^k)$. Suppose that
\begin{equation}
 s_k=a_k\quad(1\le k<q),\qquad s_q>a_q,
 \qquad s_{q+1}=a_{q+1}.
 \label{eq:ladder-traces}
\end{equation}
After applying the same permutation to its rows and columns, we can
group the states into $q$ pairs. Label the two states in pair $i$ by
$x_i$ and $y_i$, $0\le i<q$. Then
\begin{equation}
 \begin{array}{lll}
 P_{x_i x_i}=\delta,&P_{x_i y_i}=r_i,&P_{x_i x_{i+1}}=R-r_i,\\
 P_{y_i x_i}=w/r_i,&P_{y_i y_{i+1}}=1-w/r_i.&
 \end{array}
 \label{eq:paired-ladder}
\end{equation}
Here $i+1$ is read modulo $q$, all other entries are $0$, and
\begin{equation}
 w<r_i\le R,\qquad\prod_i(R-r_i)=0.
 \label{eq:ladder-domain}
\end{equation}
Thus at least one entry $P_{x_i x_{i+1}}$ is $0$.
Conversely, every matrix of this form has the specified traces.

Every nonnegative matrix of order $n$ satisfying
\eqref{eq:ladder-traces} is irreducible: its directed graph has a path
from every index to every other index. This conclusion does not require
the row sums to be $1$.
\end{lemma}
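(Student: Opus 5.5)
The plan is to read every trace condition in \eqref{eq:ladder-traces} as a statement about closed walks in the directed graph of $P$, using Lemma~\ref{lem:paired-trace} as the starting point. First I check that Lemma~\ref{lem:paired-trace} applies. Since $a_1=q\delta$, $a_2=q(\delta^2+2w)$ and $a_4=q(u^4+v^4)=q(\delta^4+4w\delta^2+2w^2)$, the three traces in \eqref{eq:paired-traces} are prescribed. For $q\ge5$ this is because $4<q$. For $q=3$ the fourth-power trace is $s_{q+1}=a_4$. Only nonnegativity of $P$ is used here, not its row sums. Lemma~\ref{lem:paired-trace} therefore gives the $q$ pairs with \eqref{eq:matched-pair}, no two-way arrows between different pairs, no directed $4$-cycles, and the stated restriction on $3$-cycles. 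Each $2\times2$ pair block has trace $\delta$ and determinant $de-(w+de)=-w$, so its eigenvalues are exactly $u$ and $-v$. Consequently the closed walks staying inside single pairs contribute exactly $a_k$ to $s_k$ for every $k$.

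The next step is to study the walks that leave a pair. Every closed walk through two or more pairs contributes a nonnegative excess, so the conditions become combinatorial. There is no closed walk of length $k<q$ meeting two pairs, at least one of length $q$, and none of length $q+1$. Contract each pair to a vertex to form the quotient digraph. Inside a pair both arrows are present, since $A_{ij}A_{ji}=w+de>0$. Hence any quotient cycle through $\ell$ pairs lifts to closed walks of length $\ell$ up to about $2\ell$, adjusted by whether entry and exit states coincide. A walk of length $q$ leaving a pair must therefore be a quotient cycle through all $q$ pairs, each visited once, with every pair entered and left at the same state. This forbids any $\ell<q$ with a shorter lift, and it forces the quotient graph to be exactly one directed $q$-cycle. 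Its orientation can then be labelled $i\to i+1$. The parametrization will come from the ban on length $q+1$ walks.
\begin{itemize}
\item The length-$q$ cycle cannot pass through a state with a positive loop, since inserting the loop would give a walk of length $q+1$.
\item It cannot use an arrow entering pair $i{+}1$ at one state and leaving at the other, since inserting the internal step would also give length $q+1$.
\end{itemize}
From these restrictions I aim to show that in each pair one state $y_i$ has zero diagonal, so that $de=0$, $A_{x_iy_i}A_{y_ix_i}=w$ and $A_{x_ix_i}=\delta$. I also aim to show that the only inter-pair arrows are $x_i\to x_{i+1}$ and $y_i\to y_{i+1}$.

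I expect this case analysis to be the main obstacle. One must exclude cross arrows $x_i\to y_{i+1}$ and $y_i\to x_{i+1}$, and show that the all-$y$ cycle $y_0\to\cdots\to y_{q-1}\to y_0$ is present. The all-$x$ cycle carries loops $\delta>0$, so if it were present it would create a length-$(q+1)$ walk. Mixed cycles are to be ruled out by the same insertion trick. I would first try a parity and counting argument: mark each step of a quotient cycle by whether it switches between $x$- and $y$-states, and show that any switch allows a one-step insertion. The $q=3$ case may need a separate check, because there $s_3>a_3$ replaces an equality and the $3$-cycle clause of Lemma~\ref{lem:paired-trace} is what forces zero diagonals on the $y$-cycle.

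With the pattern fixed, stochasticity gives \eqref{eq:paired-ladder}. Row $x_i$ reads $\delta+r_i+(R-r_i)=1$. In row $y_i$, the entry $w/r_i$ comes from $A_{x_iy_i}A_{y_ix_i}=w$, and the remaining mass $1-w/r_i$ must be positive, which gives $w<r_i$. The condition $\prod_i(R-r_i)=0$ is exactly the absence of the all-$x$ cycle. For the converse, I will compute $s_k$ as the pair contributions $a_k$ plus the contributions of walks around the $y$-cycle. The first such walk has length $q$ and positive weight, so $s_q>a_q$. Every walk of length $q+1$ would need a loop or an $x$-step on a full cycle, and such a walk is killed by the zero factor $R-r_i$. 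Finally, irreducibility holds without row sums: the quotient is a $q$-cycle through all pairs, and each pair is internally strongly connected.
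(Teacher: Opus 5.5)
Your setup is right: you apply Lemma~\ref{lem:paired-trace}, use pair blocks with polynomial $z^2-\delta z-w$, read the trace conditions as bans on multi-pair closed walks of length $<q$ and $q+1$, and use loop insertion. Your treatment of \eqref{eq:ladder-domain}, the converse and irreducibility is fine once the arrow pattern is known. The gap is the structural step in the middle, which you assert rather than prove.

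First, the claim that a multi-pair walk of length $q$ visits every pair once, entering and leaving at the same state, does not follow from lifts having length between $\ell$ and $2\ell$. A cycle through $\ell<q$ pairs with $q-\ell$ switches has length exactly $q$, and it has no shorter lift. The missing observation is that $d+e=\delta>0$ leaves at most one zero-diagonal state in each pair, while loop insertion forbids the length-$q$ cycle from touching a state with positive diagonal. Its $q$ distinct vertices therefore lie in $q$ distinct pairs, one zero-diagonal state each. Call these $y_i$. This gives $de=0$ and $P_{x_ix_i}=\delta$ at once, and the $y$-cycle is present by definition rather than being a further obstacle.

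Second, and more seriously, the step ``it forces the quotient graph to be exactly one directed $q$-cycle'' is unjustified. One quotient $q$-cycle says nothing about chords, and the ban on walks shorter than $q$ excludes a chord only when its completion is short. Your bullets only treat arrows between consecutive pairs. The missing idea is to close every inter-pair arrow into pair $i+d$, $1\le d<q$, using the $y$-cycle and the internal arrows $x_j\leftrightarrow y_j$:
\begin{itemize}
\item $y_i\to y_{i+d}$ closes to length $q-d+1$;
\item $x_i\to y_{i+d}$ or $y_i\to x_{i+d}$ closes to length $q-d+2$;
\item $x_i\to x_{i+d}$ closes to length $q-d+3$.
\end{itemize}
For example, $x_i\to x_{i+2}$ closes to length $q+1$, and $x_i\to x_{i+3}$ or $y_i\to x_{i+2}$ closes to length $q$ through a state with diagonal $\delta>0$. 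None of these is excluded by your length-$<q$ reasoning; they need the $q+1$ condition and loop insertion. Running through these cases leaves exactly $y_i\to y_{i+1}$ and $x_i\to x_{i+1}$. Only then is the quotient a single $q$-cycle, and the all-$x$ cycle is excluded as you say.

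Finally, no separate treatment of $q=3$ is needed. Loop insertion uses only $s_{q+1}=a_{q+1}$, which holds for every $q$ in the statement.
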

\begin{proof}
Let $P$ be a nonnegative matrix of order $n$ whose traces
$s_k=\tr(P^k)$ satisfy \eqref{eq:ladder-traces}. These conditions contain all three equalities
required by Lemma~\ref{lem:paired-trace}. The one for $\tr(P^4)$ comes
from $s_{q+1}=a_{q+1}$ when $q=3$, and from $s_4=a_4$ when $q\ge5$.
The lemma therefore partitions the indices into $q$ pairs. Each
corresponding two-by-two block has characteristic polynomial
$L(z)=z^2-\delta z-w$, so closed walks staying within individual pairs
contribute $a_k$ to $\tr(P^k)$.

Consequently, no directed cycle visiting states from more than one pair has length
less than $q$, and at least one such cycle has length $q$. The last
equality in \eqref{eq:ladder-traces} excludes every cycle of length
$q+1$. It also excludes a cycle of length $q$ through an index with
positive diagonal entry: inserting a loop at that index would give an
additional closed walk of length $q+1$.

Each pair has diagonal entries summing to $\delta>0$, so at most one
of them is $0$. A cycle of length $q$ must therefore visit one index
with diagonal entry $0$ from each pair. Label these indices
$y_0,\ldots,y_{q-1}$ in the order of the cycle, and call their partners
$x_i$. Then $P_{y_i y_i}=0$ and $P_{x_i x_i}=\delta$.

Consider an arrow between different pairs, and write its destination
pair as $i+d$, where $1\le d<q$ and pair indices are read modulo $q$.
Complete this arrow to a cycle by following the cycle of $y$ indices
and using the arrows $x_j\leftrightarrow y_j$ when needed. The resulting
lengths are
\[
 \begin{array}{c|c}
 \text{arrow}&\text{length of the resulting cycle}\\ \hline
 y_i\to y_{i+d}&q-d+1\\
 x_i\to y_{i+d}\text{ or }y_i\to x_{i+d}&q-d+2\\
 x_i\to x_{i+d}&q-d+3.
 \end{array}
\]
Every arrow in the table except $y_i\to y_{i+1}$ and
$x_i\to x_{i+1}$ would produce a cycle of length less than $q$, a
cycle of length $q+1$, or a cycle of length $q$ through a positive
diagonal entry. All these possibilities have been excluded. A cycle
through all the $x_i$ would also have length $q$ and meet positive
diagonal entries, so at least one arrow $x_i\to x_{i+1}$ is absent.

The arrows $x_i\leftrightarrow y_i$ and the directed cycle through the
$y_i$ give a path between every two indices. Hence $P$ is irreducible,
without any assumption on its row sums. If $P$ is stochastic, put
$r_i=P_{x_i y_i}$. The row sums give \eqref{eq:paired-ladder}.
The positive entries on the $y$ cycle and the missing arrow on the $x$
cycle give \eqref{eq:ladder-domain}.

Conversely, in a matrix of this form, any directed cycle that visits
states from more than one pair must visit all $q$ pairs in cyclic order. The cycle through the $y_i$ has
length $q$ and all its diagonal entries are $0$. Any other such cycle
must use an arrow from a $y$ index to an $x$ index and another arrow
back to a $y$ index, so its length is at least $q+2$. Thus closed walks
that stay within one pair account for all traces below $q$ and for the
trace at $q+1$, while
the positive cycle through the $y_i$ adds a strictly positive
contribution to the trace at $q$.
\end{proof}

\begin{figure}[htbp]
\centering
\includegraphics{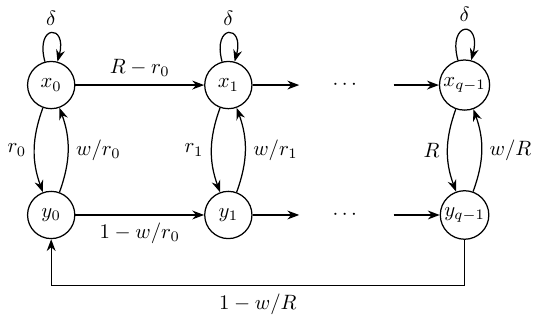}
\caption{We choose $r_{q-1}=R=1-\delta$, so there is no transition
from $x_{q-1}$ to $x_0$. Arrows represent transitions, and their labels
give the corresponding probabilities. The horizontal probabilities are
$R-r_i$ for $x_i\to x_{i+1}$ and $1-w/r_i$ for $y_i\to y_{i+1}$,
with indices read modulo $q$. The states $y_i$ form a directed cycle.
Within each pair $\{x_i,y_i\}$, the probabilities in the two directions
have product $w$. The trace conditions in \eqref{eq:ladder-traces}
force every stochastic matrix satisfying them to have this pattern
after relabelling, with at least one transition $x_i\to x_{i+1}$ absent.}
\label{fig:paired-family}
\end{figure}

We now fix $r_{q-1}=R$ in \eqref{eq:paired-ladder} and let the other
$q-1$ parameters vary with $w<r_i<R$. The only absent transition
between successive $x$ states is then $x_{q-1}\to x_0$, as in
Figure~\ref{fig:paired-family}. To prove that the corresponding characteristic
polynomials form a family of dimension $q-1$, we show that varying the free
parameters produces $q-1$ independent changes in the polynomial coefficients
at a suitable parameter choice.

\begin{lemma}
\label{lem:ladder-dimension}
Consider the matrices $P$ in \eqref{eq:paired-ladder} with
\[
 r_{q-1}=R,\qquad w<r_i<R\quad(0\le i<q-1).
\]
As $r_0,\ldots,r_{q-2}$ vary, the coefficient vectors of
$\det(zI-P)$ form a set of dimension $q-1$.
\end{lemma}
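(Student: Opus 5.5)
Since the coefficient map is polynomial in $r_0,\ldots,r_{q-2}$ on an open box in $\mathbb R^{q-1}$, the dimension of the image is at most $q-1$. For equality it suffices to find one parameter point where the Jacobian of $(r_0,\ldots,r_{q-2})\mapsto\operatorname{coeff}(P)$ has rank $q-1$; real-analyticity then gives full rank on an open dense subset, and the image contains an embedded $(q-1)$-dimensional piece. So the plan is to compute $\det(zI-P)$ explicitly as a function of the $r_i$ and exhibit $q-1$ independent coefficient directions.

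\textbf{Step 1: compute the characteristic polynomial.} Use the pair structure. Each pair block is $\begin{pmatrix}\delta&r_i\\ w/r_i&0\end{pmatrix}$ with characteristic polynomial $L(z)=z^2-\delta z-w$. The inter-pair arrows all go from pair $i$ to pair $i+1$, so $zI-P$ is block cyclic. Its $2\times2$ diagonal blocks are $zI-D_i$, and its off-diagonal blocks are $-E_i=-\diag(R-r_i,\,1-w/r_i)$ in position $(i,i+1)$. For such a block-cyclic matrix,
\[
\det(zI-P)=\det\Bigl(\prod_i (zI-D_i)\;-\;\prod_i E_i\cdot(\text{correction})\Bigr),
\]
which is most cleanly obtained as a sum over cycle covers. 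Expanding, the covers are unions of closed walks within pairs, plus cycles that wind once around all $q$ pairs. By the proof of Lemma~\ref{lem:paired-ladder}, winding cycles use $y\to y$ and $x\to x$ steps, and switch between the $x$ and $y$ levels only inside a pair. Since $R-r_{q-1}=0$, any winding cycle must use $y_{q-1}\to y_0$.

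I expect the result to take the form
\[
L(z)^q-\sum_{\emptyset\ne T}\ \prod_{i}(\text{local factors}),
\]
where the winding cycle sits on level $x$ over a contiguous run of pairs and on level $y$ elsewhere. Each pair contributes a factor that depends on its entry and exit levels: $z$ or $z-\delta$ when entry and exit are on the same level, and $r_i$ or $w/r_i$ when it switches. I will write this as a transfer-matrix trace, $L(z)^q-\tr\prod_i M_i(z)$, where each $M_i$ is a $2\times2$ matrix in $z,r_i$. In other words, $\det(zI-P)=L^q-\tr\prod_i \operatorname{adj}(zI-D_i)E_i$ (to be verified).

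\textbf{Step 2: rank at a convenient point.} Differentiate with respect to each $r_i$, $0\le i\le q-2$, and show that the $q-1$ derivative polynomials are linearly independent. I would try a degenerate limit such as $w\to0$ or $\delta\to0$ with $r_i$ interior. Alternatively, evaluate at points where the derivatives have distinct lowest-order terms in $z$, for instance by varying how many consecutive $x\to x$ steps are active. Reading off the derivatives in a triangular pattern would show independence. Because the $x$-level run must end before the gap at $x_{q-1}\to x_0$, the runs starting at different positions $i$ have different lengths. This suggests the derivative in $r_i$ contributes a term whose $z$-degree, or whose leading dependence, differs with $i$, which would give a triangular Jacobian.

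\textbf{Main obstacle.} Step 2 is where I expect the difficulty. The parameters $r_i$ enter through the products $(R-r_i)$ and $(1-w/r_i)$, and the derivatives are coupled, so exhibiting triangularity cleanly needs the right choice of base point and monomial ordering. If a symbolic triangular argument resists, I would fall back on a perturbative choice with $\delta$ and $w$ small. At leading order the winding term reduces to a simple polynomial in $z$ with coefficients given by elementary symmetric-type functions of the $(R-r_i)$. The Jacobian then becomes a Vandermonde-like determinant, which is nonzero for distinct parameters, and full rank persists for small perturbations.
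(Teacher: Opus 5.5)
Your Step~1 is fine. For the block-cyclic matrix one has $\det(zI-P)=L^q-\tr\prod_i\operatorname{adj}(zI-D_i)E_i+\prod_i\det E_i$, and the last term vanishes because $\det E_{q-1}=(R-r_{q-1})(1-w/r_{q-1})=0$. So your trace formula holds, and the trace equals the paper's $EK_0$.

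The gap is Step~2. It is the whole content of the lemma, and you leave it as a heuristic. The base points you propose for the perturbative fallback are exactly where the rank collapses, so ``full rank persists for small perturbations'' has nothing to persist from.
\begin{itemize}
\item At $w=0$, each $M_i=\operatorname{adj}(zI-D_i)E_i$ is upper triangular with diagonal entries $z(R-r_i)$ and $z-\delta$. Since $\prod_i(R-r_i)=0$, the trace is $(z-\delta)^q$, and the coefficient map does not depend on the $r_i$ at all. Apart from the all-$x$ term, which $R-r_{q-1}=0$ kills, every visit to the $x$ level starts with an entry $w(R-r_i)/r_i$. Hence neither elementary symmetric functions of the $R-r_i$ nor a Vandermonde determinant appears at leading order.
\item At $\delta=0$, the diagonal entries of $M_i$ are multiples of $z$ and the off-diagonal entries are constants, so the trace contains only powers $z^{q-2k}$. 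Since $L=z^2-w$ is even and $f(1)=0$, the rank there is at most $\lfloor q/2\rfloor<q-1$.
\item $w=b^2$ is a fixed parameter of the lemma, with $w=1/4$ in Theorem~\ref{thm:even-joint}. An argument valid only for small $w$, even if repaired at higher order, would not prove the statement.
\end{itemize}

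The degeneration should instead be taken in the $r_i$, with $w,\delta>0$ fixed. The paper lets $r_i\to R$, switching off the transitions $x_i\to x_{i+1}$. Your remark about varying how many $x\to x$ steps are active points this way, but it needs an actual computation.

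The paper substitutes $\alpha_i=r_{i+1}(R-r_i)/(r_i-w)$, an invertible triangular change of variables. It obtains $f=L^q-EK_0$ with $K_q=1$, $K_{q-1}=z-\delta$ and $K_i=((1+\alpha_i)z-\delta)K_{i+1}-\alpha_iLK_{i+2}$. The induction then runs as follows:
\begin{itemize}
\item At $\alpha_0=0$ one has $K_0=(z-\delta)K_1$.
\item The derivatives in $\alpha_1,\ldots,\alpha_{q-2}$ are $(z-\delta)$ times those of the shorter ladder. These are independent by induction, and they vanish at $z=\delta$.
\item By contrast, $\partial K_0/\partial\alpha_0$ evaluated at $z=\delta$ equals $\delta K_1(\delta)+wK_2(\delta)>0$.
\item Continuity then gives rank $q-1$ for small positive $\alpha_0$.
\item Finally, $f$ and $K_0$ determine each other through $E=L(1)^q/K_0(1)$.
\end{itemize}
The evaluation at $z=\delta$ uses $\delta>0$ essentially, which is consistent with the collapse you would meet at $\delta=0$.
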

\begin{proof}
Replace $r_0,\ldots,r_{q-2}$ by the positive parameters
\[
 \alpha_i=\frac{r_{i+1}(R-r_i)}{r_i-w},\qquad
 r_i=\frac{Rr_{i+1}+w\alpha_i}{r_{i+1}+\alpha_i}
 \qquad(0\le i<q-1).
\]
Starting from $r_{q-1}=R$, the second formula recovers the $r_i$
backwards and gives $w<r_i<R$. This is an invertible change of
parameters. Put $\alpha_{q-1}=0$, $\beta_i=1-w/r_i$ and
$E=\prod_i\beta_i$. The eigenvector equations are
\[
 (z-\delta)X_i=r_iY_i+(R-r_i)X_{i+1},\qquad
 zY_i=(w/r_i)X_i+\beta_iY_{i+1}.
\]
Eliminate $X_i$ and set $U_i=(\prod_{j<i}\beta_j)Y_i$. With
$L(z)=z^2-\delta z-w$, this gives
\[
 LU_i-((1+\alpha_i)z-\delta)U_{i+1}
       +\alpha_iU_{i+2}=0,\qquad U_{i+q}=EU_i.
\]
Define the backward recurrence
\begin{equation}
 \begin{gathered}
 K_q=1,\qquad K_{q-1}=z-\delta,\\
 K_i=((1+\alpha_i)z-\delta)K_{i+1}-\alpha_iLK_{i+2}
 \qquad(i=q-2,\ldots,0).
 \end{gathered}
 \label{eq:ladder-recurrence}
\end{equation}
Then
\begin{equation}
 f(z)=\det(zI-P)=L(z)^q-EK_0(z).
 \label{eq:ladder-polynomial}
\end{equation}
To verify the determinant, perform the same elimination on $zI-P$.
Schur complementation through the invertible diagonal block with entries
$-w/r_i$ cancels its determinant factors with the row scalings.
The reduced matrix has diagonal $L$, superdiagonal
$-((1+\alpha_i)z-\delta)$ and second superdiagonal $\alpha_i$,
apart from the wrap term
\[
 E\bigl(\alpha_{q-2}e_{q-2}-(z-\delta)e_{q-1}\bigr)e_0^{\mathsf T}.
\]
Here the $e_i$ are the standard coordinate vectors in $\mathbb R^q$.
The matrix without this term is upper triangular. Its determinant is
$L^q$; backward substitution and the rank-one determinant identity
give \eqref{eq:ladder-polynomial} when $L\ne0$. Polynomial identity
includes the roots of $L$, with multiplicity.

The recurrence makes $K_0$ a monic polynomial of degree $q$, with next coefficient
$-q\delta$. Its coefficient map has rank $q-1$ at positive points
arbitrarily close to $0$. For the induction, the case $q=2$ is
$K_0=(z-\delta)^2+w\alpha_0$.
At $\alpha_0=0$, we have $K_0=(z-\delta)K_1$. The derivatives in
$\alpha_1,\ldots,\alpha_{q-2}$ are independent by induction and vanish
at $z=\delta$, whereas
\[
 \left.\frac{\partial K_0}{\partial\alpha_0}\right|_{z=\delta}
 =\delta K_1(\delta)+wK_2(\delta)>0.
\]
Indeed, $K_i(\delta)=\alpha_i[\delta K_{i+1}(\delta)+wK_{i+2}(\delta)]$
is positive for $i\le q-2$ when the parameters are positive.
The new derivative is independent, and continuity preserves this
independence for small positive $\alpha_0$.

Finally, stochasticity gives $E=L(1)^q/K_0(1)$. Conversely,
$f$ determines $EK_0=L^q-f$; dividing by its positive leading
coefficient recovers the monic polynomial $K_0$. Passing to
characteristic coefficients therefore preserves the rank $q-1$,
which is also the number of free parameters.
\end{proof}

\begin{theorem}
\label{thm:even-joint}
For every even $n\ge4$, there is a stochastic matrix of order $n$
for which no matrix in $\Q_{n,n/2-1}$ has the same eigenvalues,
counted with multiplicity. Hence $j_n^*=n/2$.
\end{theorem}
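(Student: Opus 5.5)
The argument will split by the residue of $n$ modulo $4$. Order $4$ is Proposition~\ref{prop:four}, and Corollary~\ref{cor:three-rows} already gives $j_n^*\le n/2$. So for each remaining even $n$ it suffices to exhibit one stochastic spectrum that no matrix of $\Q_{n,n/2-1}$ realises. Such a matrix has exactly two interior rows. These rows have the same row profile and are cyclic shifts of one another.

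For $n=2q$ with $q=3$ or $q\ge5$, i.e.\ $n\equiv2\pmod4$ with $n\ge6$ or $n\ge10$ even, I will use the ladder family of Lemma~\ref{lem:paired-ladder}. Fix admissible $b,\delta$ with $w<R$, and take the subfamily of Lemma~\ref{lem:ladder-dimension} with $r_{q-1}=R$. Its characteristic coefficient vectors form a set of dimension $q-1$.

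Suppose $P\in\Q_{n,n/2-1}$ has one of these spectra. The traces $s_k$ depend only on the spectrum, so Lemma~\ref{lem:paired-ladder} puts $P$ in ladder form after relabelling. The two interior rows share a diagonal entry, so either both are $x$ states (diagonal $\delta$) or both are $y$ states (diagonal $0$). Their multisets of entries must also agree:
\begin{itemize}
\item for $x$ rows, $\{\delta,r_i,R-r_i\}$ must equal $\{\delta,r_j,R-r_j\}$;
\item for $y$ rows, $\{w/r_i,1-w/r_i\}$ must equal $\{w/r_j,1-w/r_j\}$.
\end{itemize}
Either way, $r_j\in\{r_i,R-r_i\}$ or $r_j\in\{r_i,w/(1-w/r_i)\cdots\}$, i.e.\ one algebraic relation among the parameters. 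The case $i=j$ is impossible, since distinct states in the same pair have different diagonals.

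So every realisable ladder matrix has parameters on a finite union of hypersurfaces in the general ladder family. That family has $q$ parameters constrained by $\prod(R-r_i)=0$, hence is a finite union of $(q-1)$-dimensional pieces: choose which $r_i$ equals $R$, with a cyclic relabelling. The extra relation cuts each piece to dimension at most $q-2$.

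The coefficient map is polynomial, indeed smooth semialgebraic, so the image of these pieces has dimension at most $q-2$. Here I must handle degenerate solutions as well, for instance $R-r_i=\delta$ or a coincidence with $\delta$ in the multiset, which is also a single equation. I also need the coefficient map restricted to ladders to be finite-to-one or at least dimension-nonincreasing. Semialgebraic dimension theory gives the inequality $\dim f(X)\le\dim X$ directly.

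Since the realised spectra from the $r_{q-1}=R$ family have dimension $q-1>q-2$, some spectrum in that family is not attained. The matrix carrying it is the witness.

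For $n\equiv0\pmod4$ with $n\ge8$, write $n=(n-2)+2$ with $(n-2)/2=q$ odd and $q\ge3$. The witness is the direct sum of a generic ladder matrix of order $n-2$ with the deterministic two-cycle, which adds eigenvalues $1,-1$. This is the main obstacle: I must show that any stochastic matrix with this spectrum splits as a ladder part plus a closed two-cycle.

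My first attempt will use the Frobenius normal form. The eigenvalue $1$ now has multiplicity $2$, so there are two closed classes. The eigenvalue $-1$ needs an even-period class, and the trace conditions should force a two-state deterministic cycle, as in Proposition~\ref{prop:four}. Removing it, the remaining principal submatrix satisfies the shifted trace conditions~\eqref{eq:ladder-traces}. The last sentence of Lemma~\ref{lem:paired-ladder} then makes it irreducible without needing row sums, hence a closed ladder class.

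After that, the two interior rows are either both in the cycle, both in the ladder, or one in each. The cycle rows have diagonal $0$ and a single entry $1$, matching no ladder row since $w/r_i<1$. Both in the cycle fails by the shift argument of Proposition~\ref{prop:four}. Both in the ladder reduces to the dimension count above.
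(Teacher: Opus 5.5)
\paragraph{Case $n=2q$.} Your argument here is the paper's. Equal diagonal entries force two $x$ rows or two $y$ rows. After cancelling the common diagonal entry, the multiset condition leaves $r_j=r_i$, $r_i+r_j=R$ or $r_ir_j=w(r_i+r_j)$, so your worry about coincidences with $\delta$ disappears. Each relation is proper on every piece $r_k=R$, and semialgebraic dimension does the rest.

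You also apply this to even $q\ge6$, which the paper does not. That is legitimate, since neither Lemma~\ref{lem:paired-ladder} nor Lemma~\ref{lem:ladder-dimension} uses the parity of $q$. In your scheme padding is therefore needed only at $n=8$, whereas the paper pads for every $n\equiv0\pmod4$.

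\paragraph{The gap in the padded case.} You cannot avoid padding at $n=8$, because $q=4$ is excluded from Lemma~\ref{lem:paired-ladder}. The sentence ``the trace conditions should force a two-state deterministic cycle, as in Proposition~\ref{prop:four}'' is not an argument. The padded traces $s_k(A)+1+(-1)^k$ are not of the form required by Lemma~\ref{lem:paired-trace}, and Proposition~\ref{prop:four} relied on specific features of the list $\{1,-1,a,b\}$.

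What works is spectral. Since $A$ is irreducible with a positive diagonal entry, the only unimodular eigenvalues in the padded list are $1,1,-1$. The closed class carrying $-1$ therefore has period exactly $2$, and the spectrum of such a class is invariant under $z\mapsto-z$ with multiplicity. To conclude that the class is $C_2$, you must choose the witness $A$ so that $0\notin\spec A$ and $\spec A\setminus\{1\}$ contains no pair $z,-z$.

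This is not automatic for a ``generic'' ladder matrix. It has to be verified simultaneously with avoiding the image of the equal-profile set. The paper does this at the degenerate point $\delta=0$, $\alpha=0$ (all $r_i=R$), where the polynomial is $f_0(z)=(z^2-w)^q-(1-w)^qz^q$. For odd $q$, $f_0(z)-f_0(-z)=-2(1-w)^qz^q$ and $f_0(0)=(-w)^q\ne0$, so there is no zero root and no pair $z,-z$. These open conditions persist nearby, and full-rank points of the coefficient map occur arbitrarily close to this point. Hence some $A$ satisfies both requirements.

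For even $q$ one has $f_0(-z)=f_0(z)$, which is exactly why oddness is needed at this step. Once this choice is made explicit, the rest of your padded argument coincides with the paper's: deleting the closed two-cycle, irreducibility from the last clause of Lemma~\ref{lem:paired-ladder}, the Perron argument for unit row sums, the profile mismatch between $C_2$ rows and ladder rows, and the $\pm1$ displacement argument.
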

\begin{proof}
Proposition~\ref{prop:four} handles $n=4$. Suppose first that $n=2q$
with $q\ge3$ odd. Fix $b=1/2$ and choose
$0<\delta\le1/(200n^2)$. By Lemma~\ref{lem:paired-ladder}, every
stochastic matrix with the same eigenvalues as a matrix in
\eqref{eq:paired-ladder} again has that form after relabelling.

Rows labelled $x_i$ have diagonal entry $\delta$, while those labelled
$y_i$ have diagonal entry $0$. Two rows with equal profiles must
therefore both be $x$ rows or both be $y$ rows. For distinct $i,j$,
their other entries give the following necessary and sufficient
conditions:
\begin{equation}
 \begin{array}{ll}
 x\text{ rows}:& r_i=r_j\quad\text{or}\quad r_i+r_j=R,\\
 y\text{ rows}:& r_i=r_j\quad\text{or}\quad
 r_ir_j=w(r_i+r_j).
 \end{array}
 \label{eq:equal-ladder-profiles}
\end{equation}
A subset of $\mathbb R^k$ is called \emph{semialgebraic} if it is
a finite union of sets defined by finitely many polynomial equations
and inequalities. A map is semialgebraic if its graph is such a set;
see \cite{BasuPollackRoy2006}.

Rotate the pair labels so that an absent $x$-transition is last.
Then $r_{q-1}=R$, and the other $q-1$ parameters satisfy
$w<r_i\le R$. Each condition in \eqref{eq:equal-ladder-profiles}
is a proper polynomial equation on this domain. Their finite union
has dimension at most $q-2$, as does its coefficient image under
the rational, hence semialgebraic, coefficient map.
Lemma~\ref{lem:ladder-dimension} gives a family of dimension $q-1$,
so some of its eigenvalue lists cannot occur in any stochastic
matrix with two equal row profiles. They therefore cannot occur
in $\Q_{n,n/2-1}$.

For an order divisible by four, write $n=2q+2$ with $q\ge3$ odd.
At $\delta=0$ and $\alpha=0$, the order-$2q$ family has polynomial
\[
 f_0(z)=(z^2-w)^q-(1-w)^qz^q.
\]
The polynomials $f_0(z)$ and $f_0(-z)$ have no common root: since
$q$ is odd, their difference is $-2(1-w)^qz^q$, whereas
$f_0(0)=(-w)^q\ne0$.
Thus this polynomial has no zero root and no pair of roots $z,-z$.
These properties persist for small positive $\delta$ and $\alpha$.
For each fixed positive $\delta$, the points where the coefficient
map has full rank in Lemma~\ref{lem:ladder-dimension} occur arbitrarily
close to $0$. We can therefore choose a matrix $A$ in this neighbourhood
so that no stochastic matrix with its eigenvalues has two equal row
profiles.

Add the eigenvalues $1,-1$ by placing $A$ and the deterministic
two-state cycle $C_2$ along the diagonal of a larger matrix. Denote
the resulting eigenvalue list by $\Lambda$. In any stochastic matrix
with this list, the eigenvalue $-1$ comes from a closed communicating
class of states with even period. The matrix $A$ is irreducible and
has a positive diagonal entry, so its only eigenvalue of modulus $1$
is $1$. The class must therefore have period two. Its eigenvalues
occur in pairs $z,-z$. Since $\Lambda$ has no zero eigenvalue and no
other such pair, the class has two states and its matrix is $C_2$.

Deleting those two rows and columns leaves a matrix $M$ of order $2q$
with row sums at most $1$ and the same eigenvalues as $A$.
Lemma~\ref{lem:paired-ladder} makes $M$ irreducible. Since its spectral
radius is $1$, a positive left Perron vector annihilates the nonnegative
vector $\one-M\one$, where $\one$ is the all-ones vector.
Hence this vector is $0$, and $M$ is stochastic. No transition joins
the remaining states to the two-state cycle, so the whole matrix has
blocks $M$ and $C_2$ along its diagonal after relabelling.

No two rows of $M$ have equal profiles, by the choice of $A$.
Each row of $M$ has at least two positive entries, whereas each row of
$C_2$ has only one. The two interior rows could therefore only come
from $C_2$. At consecutive labels, these rows have displacements
$1$ and $-1$ modulo $n$, which cannot agree for $n>2$.
Thus no matrix in $\Q_{n,n/2-1}$ has eigenvalue list $\Lambda$.
Finally, $m=n/2$ leaves no interior rows and allows every stochastic
matrix in each even order.
\end{proof}

\begin{corollary}
\label{cor:positive-joint-witnesses}
For every even $n\ge4$, there is a stochastic matrix of order $n$
with strictly positive rational entries and $n$ distinct eigenvalues
such that no matrix in $\Q_{n,n/2-1}$ has the same eigenvalues.
\end{corollary}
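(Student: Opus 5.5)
The approach is to perturb the witness from Theorem~\ref{thm:even-joint}, using the fact that the spectra \emph{not} attained in $\Q_{n,n/2-1}$ form an open set of coefficient vectors. By the remarks after \eqref{eq:coefficient-map}, the class $\Q_{n,n/2-1}$ is compact and $P\mapsto\operatorname{coeff}(P)$ is continuous. So the image $\mathcal C=\operatorname{coeff}(\Q_{n,n/2-1})$ is closed in $\mathbb R^n$. Since each coefficient list determines the eigenvalue list, a stochastic matrix $P$ shares its eigenvalues with some matrix in $\Q_{n,n/2-1}$ exactly when $\operatorname{coeff}(P)\in\mathcal C$. Theorem~\ref{thm:even-joint} supplies a stochastic $P_0$ of order $n$ with $\operatorname{coeff}(P_0)\notin\mathcal C$. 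By continuity, there is a neighbourhood $W$ of $P_0$ in the affine space
\[
 \mathcal V=\{X\in\mathbb R^{n\times n}:X\one=\one\}
\]
such that every stochastic matrix in $W$ is also a witness.

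First, achieve positivity. The mixtures $P_t=(1-t)P_0+tJ$ with $0<t\le1$ are stochastic with all entries positive, and $P_t\to P_0$ as $t\to0$. Fix a small $t>0$ with $P_t\in W$. Since $P_t$ has positive entries, a neighbourhood $W'\subseteq W$ of $P_t$ in $\mathcal V$ consists entirely of entrywise positive, hence stochastic, matrices. Every matrix in $W'$ is therefore a positive stochastic witness.

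Next, achieve distinct eigenvalues. Let $\Delta(X)$ be the discriminant of $\det(zI-X)$; it is a polynomial in the entries of $X$. I will check that $\Delta$ is not identically zero on $\mathcal V$ using the following example: set row $0$ to $e_0^{\mathsf T}$, and for $i\ge1$ give row $i$ diagonal entry $d_i$ and entry $1-d_i$ in column $0$, with $1,d_1,\ldots,d_{n-1}$ distinct. This matrix is lower triangular with eigenvalues $1,d_1,\ldots,d_{n-1}$, so $\Delta\ne0$ there. A polynomial that is not identically zero on the affine space $\mathcal V$ has a zero set with empty interior in $\mathcal V$. Hence $W''=W'\cap\{\Delta\ne0\}$ is a nonempty open subset of $\mathcal V$.

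Finally, achieve rationality. Matrices in $\mathcal V$ with rational entries are dense in $\mathcal V$: choose rational off-diagonal-free entries arbitrarily close, then let the diagonal entry absorb the row sum. So $W''$ contains a rational matrix. That matrix is strictly positive, stochastic, has $n$ distinct eigenvalues, and has coefficient vector outside $\mathcal C$, which proves the corollary.

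The only step that needs care is the openness argument, namely that $\mathcal C$ is closed. This rests on the compactness of $\Q_{n,m}$ already noted in the paper, so I expect no genuine obstacle.
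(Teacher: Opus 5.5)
Your proof is correct and follows essentially the same route as the paper: closedness of $\operatorname{coeff}(\Q_{n,n/2-1})$ gives an open set of witnesses around the matrix from Theorem~\ref{thm:even-joint}, mixing with $J$ gives strict positivity, and a discriminant argument combined with rational approximation gives distinct eigenvalues and rational entries. The only difference is in the discriminant step. The paper first makes the matrix rational and then uses the pencil $(1-\varepsilon)Q+\varepsilon C_n$ with small rational $\varepsilon$, whose discriminant is a polynomial in $\varepsilon$ that is nonzero at $\varepsilon=1$. You instead show, via your triangular example, that the discriminant is not identically zero on the affine space, and then pick a rational point in the resulting open set.
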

\begin{proof}
By Theorem~\ref{thm:even-joint}, choose a stochastic matrix whose
coefficient vector lies outside the image of $\Q_{n,n/2-1}$.
That image is compact, so all sufficiently small stochastic
perturbations remain excluded. Mixing with $J$ and then approximating
the rows by rational probability vectors gives a strictly positive
rational excluded matrix $Q$.

Consider $Q_\varepsilon=(1-\varepsilon)Q+\varepsilon C_n$.
Its characteristic-polynomial discriminant is a polynomial in
$\varepsilon$, nonzero at $\varepsilon=1$ because $C_n$ has distinct
eigenvalues. It has only finitely many zeros. Arbitrarily small
positive rational $\varepsilon$ therefore give strictly positive
rational matrices with distinct eigenvalues that remain excluded.
\end{proof}

\section{Boundary-polynomial rigidity and arc exclusion}
\label{sec:minimal-arcs}

Proposition~\ref{prop:nondivisor} and Theorem~\ref{thm:local-polygon}
excluded points sufficiently close to an endpoint of a Karpelevi\v{c}
arc. We now consider an arc whose points between the endpoints first
occur as eigenvalues of stochastic matrices at order $n\ge4$.
For arcs whose smaller endpoint denominator is at least $2$, we give
bounds on $m$ that exclude every point between the endpoints from
$\Om_{n,m}$. We first prove that any one of these eigenvalues
determines the characteristic polynomial of every stochastic matrix
of order $n$ that has it.

The Type~III and Type~II classifications in
\cite{VerbekenGinisTypeIII2026,VerbekenGinisTypeII2026} describe stochastic
matrices with a prescribed characteristic polynomial. Here the polynomial
is first forced by the boundary eigenvalue. We then use those
classifications to count the rows with two positive entries.

\begin{theorem}[A boundary eigenvalue determines the characteristic polynomial]
\label{thm:boundary-polynomial}
\label{lem:boundary-polynomial}
Let $\lambda$ be a nonreal point between the endpoints of a
Karpelevi\v{c} boundary arc. Let $n\ge4$ be the smallest order of a
stochastic matrix with eigenvalue $\lambda$, so
$\lambda\in\Theta_n\setminus\Theta_{n-1}$. Write the endpoint
denominators as $q<s$ and put $d=\lfloor n/q\rfloor$.
Let $0<\alpha<1$ be the arc parameter: for $q\ge2$, $\lambda$ is
a selected root of \eqref{eq:ito}; for $q=1$,
$\lambda=(1-\alpha)+\alpha e^{\pm2\pi i/n}$. Put $\beta=1-\alpha$.
Every stochastic matrix $P$ of order $n$ with eigenvalue $\lambda$
has characteristic polynomial
\begin{equation}
 z^{n-qd}(z^q-\beta)^d-\alpha^d z^{n-s}.
 \label{eq:rigid-boundary-polynomial}
\end{equation}
In Type III, where $s=n>qd$, replace $\lambda$ by its complex
conjugate if necessary so that the endpoint fractions are ordered
$p/q<r/n$. Label the states $0,\ldots,n-1$ in the counterclockwise
order of the coordinates of an eigenvector for $\lambda$, and write
\[
 R(i)=i+r\pmod n,\qquad S(i)=i+r-1\pmod n.
\]
In this labelling, $P_{i,R(i)}>0$ for every $i$, and every other entry
of row $i$ is $0$ except possibly $P_{i,S(i)}$.
\end{theorem}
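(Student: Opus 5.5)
We plan to start from the invariant-polygon inclusion \eqref{eq:polygon-invariant}. Let $P$ be stochastic of order $n$ with $Pv=\lambda v$, and let $K=K(v)$. Since $\lambda$ is on $\partial\Theta_n$ but not in $\Theta_{n-1}$, we first show that $K$ has exactly $n$ distinct vertices, one per state. If two coordinates coincided, or one coordinate lay in the convex hull of the others, we would merge or delete that state: reroute its incoming probability to a convex combination of vertex states that reproduces its coordinate. This gives a stochastic matrix of order $n-1$ with eigenvalue $\lambda$, which contradicts minimality. This is the invariant-polygon reduction of \cite{VerbekenGinisPolygon2026}. Thus $v$ has $n$ coordinates that are distinct extreme points, and we label the states counterclockwise.

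Next we use boundary extremality. The inclusion $\lambda K\subseteq K$ must be tight. If some vertex $\lambda v_i$ lay strictly inside $K$ in a way that could be perturbed, we would push $\lambda$ slightly outward, in the spirit of the perturbation of Lemma~\ref{lem:sector}, and leave $\Theta_n$. We therefore aim to show that each $\lambda v_i$ lies on an edge of $K$. Because $v_i$ is an extreme point, row $i$ can then only charge the two endpoints of that edge, so every row has at most two positive entries at cyclically adjacent vertices. Rotation by $\arg\lambda$ together with a monotonicity argument (comparing successive angles around $K$) will show that the edge hit by $\lambda v_i$ has endpoints $i+r$ and $i+r-1$ for one fixed offset $r$, independent of $i$. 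Here $r/n$ is determined by the limiting endpoint fraction. This gives the stated support pattern $P_{i,R(i)}>0$ with only $P_{i,S(i)}$ possibly nonzero, and we plan to prove this structure in all types before specialising.

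With this two-shift support, $P=D_1C_n^{r}+D_2C_n^{r-1}$ with diagonal weights. We then compute the characteristic polynomial by the chain elimination of Theorem~\ref{thm:chains}. The map $i\mapsto i+r$ splits into $\gcd(r,n)$ cycles, and the $S$-transitions link consecutive chains. Using the Type~II and Type~III classifications \cite{VerbekenGinisTypeII2026,VerbekenGinisTypeIII2026} for matrices of this shape, we expect the characteristic polynomial to have the form $z^{n-qd}\prod(z^q-\beta_j)-\prod\alpha_j\,z^{n-s}$ with $\beta_j=1-\alpha_j$. Requiring $\lambda$ to be a root and to lie on the arc forces, by AM--GM and strict monotonicity of the arc in its parameter, all $\alpha_j$ equal to $\alpha$. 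This yields \eqref{eq:rigid-boundary-polynomial}. The Type~0 case $q=1$ is handled directly: the polygon tightness forces $P=\beta I+\alpha C_n^{\pm1}$ up to the labelling.

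The main obstacle is the second step: proving that each $\lambda v_i$ lies on an edge with a uniform offset, rather than merely using that $\lambda$ is on $\partial\Theta_n$. We would first try a variational argument. If some $\lambda v_i$ were interior to $K$, we would move $v_i$ slightly outward and adjust row $i$, obtaining a polygon $K'$ and a scalar $\lambda'$ just outside the arc with $\lambda'K'\subseteq K'$. By Ito's converse, which realises invariant polygons, $\lambda'$ would be an eigenvalue of some stochastic matrix of order $n$. That contradicts maximality of $\lambda$ on its ray from $1$.
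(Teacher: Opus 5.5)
Your first step, showing that all $n$ coordinates are distinct vertices of $K$, agrees with the paper. The third step, however, has a genuine gap. The support pattern alone does not give a characteristic polynomial of the form $z^{n-qd}\prod_j(z^q-\beta_j)-z^{n-s}\prod_j\alpha_j$. When many rows have two positive entries, the graph has far more positive-weight simple cycles than $d$ disjoint $q$-cycles and one $s$-cycle. For example, take $n=5$ and the arc from $e^{2\pi i/3}$ to $e^{4\pi i/5}$. In the eigenvector labelling, row $i$ is supported on $\{i+1,i+2\}$. If all these entries are positive, the $4$-cycles $i\to i+2\to i+3\to i+4\to i$ have positive weight and produce a nonzero coefficient of $z$, which is absent from $z^5-\beta z^2-\alpha$. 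The chain elimination of Theorem~\ref{thm:chains} needs the two-entry rows to be the last states of the chains, which you have not established. Invoking the Type~II and Type~III classifications here is circular: they take the Ito polynomial as a hypothesis, whereas this theorem is meant to derive it.

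The missing idea is an isospectral reduction. The paper writes $P=C^\kappa B(\gamma)$ and conjugates by the stochastic matrices $U_i(t)$. The matrix $U_i(t)PU_i(t)^{-1}$ keeps the characteristic polynomial, $\kappa$ and $\prod_i a_i$, while moving a positive $\gamma_i$ to index $i+\kappa$. A compactness selection (fewest positive $\gamma_i$, then least polygon area) and the first-return theorem of \cite{VerbekenGinisPolygon2026} then give a matrix whose only positive simple cycles are $C_1,\ldots,C_d,D$, with the Farey data fixed by the argument identity. Only then does Lemma~\ref{lem:return-determinant} give the product form. The modulus bound, which is where your AM--GM step belongs, then forces every $b_j=\beta$.

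The Type~III support claim is also not proved. Contact with an edge at a uniform offset only shows that row $i$ is supported in $\{R(i),S(i)\}$, not that $P_{i,R(i)}>0$ for every $i$. That needs two further ingredients. The first is the half-open contact $\lambda v_{i-\kappa}\in(v_{i-1},v_i]$ with all $a_i>0$, which is the positivity part of Lemma~\ref{lem:positive-contact}. The second is an argument that the everywhere-positive shift is $R$ rather than $S$ in the orientation fixed by $p/q<r/n$. The paper argues by contradiction: otherwise $i\mapsto i+n-r+1$ would be positive in every row of the conjugate ordering. The reduction would then yield an arc whose endpoint denominators disagree with the Farey endpoints $(n-r)/n<(q-p)/q$ of the conjugate arc.

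Moreover, your variational sketch at most places each $\lambda v_i$ on $\partial K$. Preservation of cyclic order alone does not prevent two images from lying on one edge and none on another. The fact that each half-open edge receives exactly one image is what makes the offset uniform, and it is precisely what the paper imports from Dmitriev and Dynkin.
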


We use Dmitriev and Dynkin's theorem on the polygon formed by an
eigenvector's coordinates. Besides identifying the two possible nonzero
entries in each row, we need its conclusion that the coefficient $a_i$
in the next lemma is positive for every $i$.

\begin{lemma}[Dmitriev--Dynkin contact theorem]
\label{lem:positive-contact}
Let $\lambda\in\Theta_n\setminus\Theta_{n-1}$ be nonreal, and suppose
$t\lambda\notin\Theta_n$ for every $t>1$. This means that $\lambda$ is
the farthest point of $\Theta_n$ on the ray from the origin through
$\lambda$. Let $K$ be a convex polygon
with $n$ vertices satisfying $\lambda K\subseteq K$.
After replacing $\lambda$ and every vertex of $K$ by their complex
conjugates if necessary, label the vertices $v_0,\ldots,v_{n-1}$
counterclockwise. There is an integer $1\le\kappa<n$ such that, for
every $i$,
\begin{equation}
 \lambda v_{i-\kappa}
 =\gamma_i v_{i-1}+a_i v_i\in(v_{i-1},v_i],
 \qquad 0\le\gamma_i<1,\quad a_i=1-\gamma_i>0.
 \label{eq:positive-contact}
\end{equation}
All indices are read modulo $n$. Thus $\lambda v_{i-\kappa}$ lies
on the segment from $v_{i-1}$ to $v_i$, including $v_i$ and excluding
$v_{i-1}$. Each of these $n$ segments contains exactly one of the
$n$ points $\lambda v_j$.

In particular, let $v$ be an eigenvector for $\lambda$ of a stochastic
matrix $A$ of order $n$, and take $K$ to be the convex hull of its
coordinates. Relabel the states in the vertex order above, applying
the same permutation to the rows and columns of $A$. Then
\[
 A=C^\kappa B(\gamma),\qquad
 (Cx)_i=x_{i+1},\qquad
 (B(\gamma)x)_i=\gamma_i x_{i-1}+a_i x_i.
\]
\end{lemma}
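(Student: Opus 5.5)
The plan rests on the standard correspondence between invariant polygons and stochastic matrices. If $\mu K'\subseteq K'$ for a convex polygon $K'$ with at most $n$ vertices $w_0,\dots,w_{n-1}$, then each $\mu w_j$ can be written as a convex combination of the $w_k$. Collecting these weights into a matrix $A'$ gives a stochastic matrix of order $n$ with $A'w=\mu w$, so $\mu\in\Theta_n$. Conversely, \eqref{eq:polygon-invariant} turns an eigenvector into such a polygon.

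With this dictionary, the hypotheses become two geometric facts. First, $\lambda\notin\Theta_{n-1}$ means no polygon with fewer than $n$ vertices is $\lambda$-invariant, so $K$ has exactly $n$ distinct vertices and $0$ lies in its interior. For the interior claim I would reuse the rotation argument in the proof of Theorem~\ref{thm:tangent}: the orbit $\lambda^kK$ spirals around $0$ because $\lambda$ is nonreal. Second, extremality of $\lambda$ on its ray means that no polygon $K'$ with $n$ vertices satisfies $t\lambda K'\subseteq K'$ for any $t>1$.

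The core step is a contact statement: every half-open edge $(v_{i-1},v_i]$ contains at least one of the points $\lambda v_j$. I would argue by contradiction. Suppose some edge $e_i$ meets no $\lambda v_j$, except possibly at its starting vertex $v_{i-1}$. Then perturb $K$ by pushing $v_i$ slightly outward, so that $e_i$ moves off the image points. One must check that this perturbation strictly frees all contacts near the affected edges; iterating, or using a small simultaneous outward perturbation, gives a polygon $K'$ with $n$ vertices and $\lambda K'\subseteq\operatorname{int}K'$. By compactness, $t\lambda K'\subseteq K'$ for some $t>1$, which contradicts extremality.

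This perturbation is the step I expect to be the main obstacle. The difficulty is contacts located exactly at vertices, which force the half-open convention and the strict positivity $a_i>0$. I would first try to handle these by rotating the perturbation direction slightly, using that $\lambda$ acts as a rotation combined with a contraction. If that fails, I would fall back on a Lagrange-type first-order argument, as in Lemma~\ref{lem:sector}, applied to the support functions of the edges.

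Once every half-open edge is hit, the rest is counting. There are $n$ points $\lambda v_j$ and $n$ disjoint half-open edges covering $\partial K$, so each edge contains exactly one image and every $\lambda v_j$ lies on $\partial K$. Multiplication by $\lambda$ preserves counterclockwise cyclic order, so the assignment from images to edges is a cyclic shift. This gives an integer $\kappa$ with $\lambda v_{i-\kappa}\in(v_{i-1},v_i]$, which is \eqref{eq:positive-contact} with $a_i>0$; here $\kappa\ne0$ because $\lambda\ne1$. After conjugating if necessary, the vertices may be labelled counterclockwise.

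For the matrix form, let $v$ be an eigenvector of $A$ and $K=K(v)$. Since $K$ has $n$ vertices, the coordinates of $v$ are exactly those vertices and are distinct. Row $i-\kappa$ of $A$ expresses $\lambda v_{i-\kappa}$ as a convex combination of the vertices. A point on an edge of a convex polygon has a unique convex representation by the vertices, namely the one using only the two endpoints of that edge. Hence row $i-\kappa$ has entries $\gamma_i$ in column $i-1$ and $a_i$ in column $i$, and is zero elsewhere. This is precisely $A=C^\kappa B(\gamma)$.
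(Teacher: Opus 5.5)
Your dictionary between invariant polygons and stochastic matrices, the counting that turns ``every half-open side is hit'' into a cyclic shift, and the deduction of $A=C^\kappa B(\gamma)$ from uniqueness of convex representations on a side are all sound; the last is exactly the argument the paper gives. The gap is the contact step, which you rightly flag as the main obstacle. It is the entire content of the Dmitriev--Dynkin Basic Theorem, and the paper does not prove it. It cites part~5.1 of \cite{DmitrievDynkin1946} for the contacts. It uses the footnote there for the fact that an image at the terminal vertex of one side and an image at the initial vertex of another never occur together, so that one choice between $K$ and $\overline K$ gives $a_i>0$ for all $i$. It uses \S13 for exactly one image per half-open side.

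Your perturbation sketch does not substitute for this, and as formulated it cannot work. You run it on $K$ before deciding whether to conjugate, so it would apply equally to $\overline K$. But the implication it is meant to prove is false for one of the two orientations. Take a realisation with $0<\alpha<1$ in which some rows have one positive entry and others have two, such as the chain matrices of Theorem~\ref{thm:chains} at minimal order. In the good orientation, some images sit at terminal vertices ($\gamma_i=0$) and others lie inside open sides. Working with $\overline K$ amounts to using the convention $[v_{i-1},v_i)$ on $K$. Then a side whose image is at its terminal vertex, preceded by a side whose image is interior, contains no image except at its excluded endpoint. After counterclockwise relabelling this is exactly your hypothesis ``no $\lambda v_j$ except possibly at the starting vertex''. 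Yet $\overline\lambda$ is still extremal on its ray, so no $K'$ with $\overline\lambda K'\subseteq\operatorname{int}K'$ exists. A correct argument must therefore use the direction in which $\lambda$ rotates relative to the labelling, and yours never does. Even locally the move does not do what you describe. Pushing $v_i$ outward frees nothing on $e_i$, which has no images there to free. It only frees contacts on $e_{i+1}$ and at $v_i$, and it displaces $\lambda v_i$, which may lie on $\partial K$ and then leave $K'$. Controlling these displacements around the whole polygon is the real problem, and it is not addressed.

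Separately, ``$\kappa\ne0$ because $\lambda\ne1$'' is wrong. On a Type~0 segment, take $A=(1-\alpha)I+\alpha C_n^{-1}$ and $v_j=\omega^j$. Then $\lambda=1-\alpha+\alpha\overline\omega$ and $\lambda v_j=\alpha v_{j-1}+(1-\alpha)v_j$. So $\kappa=0$ in the counterclockwise labelling, although every $a_i=1-\alpha>0$. The repair is easy: $\kappa=0$ forces every image into an open side, since $\lambda v_i\ne v_i$, and the conjugate labelling then gives $\kappa=1$. The example shows that conjugation is not merely about labelling counterclockwise, which is always possible. It selects the orientation in which the half-open convention, $a_i>0$ and $1\le\kappa<n$ hold together. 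That selection is precisely the ingredient missing from your contact step.
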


The conclusion of the Dmitriev--Dynkin contact theorem that
$\lambda v_{i-\kappa}$ lies on the edge from $v_{i-1}$ to $v_i$
is part 5.1 of the Basic Theorem in
\cite[\S2]{DmitrievDynkin1946}. Its footnote gives the strict positivity
used here, and its proof in \S13 shows that each segment, with one
endpoint excluded, contains exactly one image vertex. An English
translation appears in \cite[Appendix A, pp.~A-5 and A-16]{Swift1972}.
The footnote rules out $\gamma_i=0$ and $a_j=0$ occurring for any
indices $i,j$. Thus either all $\gamma_i$ are positive or all $a_i$
are positive. Choosing between the polygon and its complex conjugate
gives $a_i>0$ for every $i$. The theorem applies to every polygon $K$
satisfying the lemma's hypotheses; we do not need to choose one of
minimum area.

Now let $v$ be an eigenvector for $\lambda$ of a stochastic matrix $A$
of order $n$, and let $K$ be the convex hull of its coordinates.
The stochastic row equations
\[
 \lambda v_i=\sum_j A_{ij}v_j,\qquad
 A_{ij}\ge0,\qquad \sum_j A_{ij}=1
\]
give $\lambda v_i\in K$ for every $i$, hence $\lambda K\subseteq K$
by convexity. The hull $K$ must have $n$ vertices.
Otherwise, expressing $\lambda$ times each vertex as a convex combination
of the vertices would give a stochastic matrix of smaller order with
eigenvalue $\lambda$, contrary to $\lambda\notin\Theta_{n-1}$.
Thus all $n$ coordinates are distinct vertices of this hull.
Every convex representation of $\lambda v_{i-\kappa}$ by these vertices
uses only $v_{i-1}$ and $v_i$, with uniquely determined coefficients.
Because $Av=\lambda v$, the rows of $A$ must therefore have exactly
the coefficients displayed in the lemma.

The next lemma computes a characteristic polynomial from the directed
cycles in the graph of a matrix.

\begin{lemma}
\label{lem:return-determinant}
Let $q,s,d$ be positive integers with $q<s\le n$ and $qd\le n$.
Consider a directed graph on $n$ vertices with nonnegative edge weights,
and let $A$ be its adjacency matrix. The weight of a cycle is the product
of its edge weights. A cycle is simple if it does not repeat a vertex,
except when returning to its starting vertex.

Suppose every simple directed cycle with positive weight is one of
$C_1,\ldots,C_d,D$, where the $C_j$ have length $q$ and no two share
a vertex, and $D$ has length $s$ and shares at least one vertex with
each $C_j$. Write $b_j$ for the weight of $C_j$, and suppose the
weight of $D$ is $\prod_{j=1}^d(1-b_j)$. The listed cycles $C_j$ are
allowed to have weight $b_j=0$. Then $A$ has characteristic polynomial
\begin{equation}
 z^{n-qd}\prod_{j=1}^d(z^q-b_j)
 -z^{n-s}\prod_{j=1}^d(1-b_j).
 \label{eq:return-determinant}
\end{equation}
\end{lemma}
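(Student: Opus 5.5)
We would prove Lemma~\ref{lem:return-determinant} with the permutation expansion of $\det(zI-A)$, organised by cycle covers. Write
\[
 \det(zI-A)=\sum_{\pi}\operatorname{sgn}(\pi)\prod_i(zI-A)_{i\pi(i)} .
\]
Each permutation $\pi$ splits into disjoint cycles. Fixed points contribute either $z$ or $-A_{ii}$. Nontrivial cycles of length $\ell\ge2$ contribute $-A$-entries around a directed cycle of the graph. We regroup the expansion in the standard Coefficient Theorem form
\[
 \det(zI-A)=\sum_{L}(-1)^{c(L)}w(L)\,z^{\,n-|L|},
\]
where $L$ runs over collections of vertex-disjoint simple directed cycles, loops included. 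Here $c(L)$ is the number of cycles in $L$, $|L|$ is the number of vertices they cover, and $w(L)$ is the product of their weights. The sign works because a cycle of length $\ell$ has permutation sign $(-1)^{\ell-1}$ and brings $\ell$ factors of $-1$, giving net sign $-1$ per cycle. We would quote this identity as the standard expansion; it is routine.

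Only positive-weight cycles contribute, and by hypothesis these are among $C_1,\ldots,C_d,D$. Loops count as cycles of length $1$. Since $q\ge1$ and $s>q\ge1$, a loop could appear only if $q=1$, in which case it is some $C_j$. So every admissible collection $L$ is one of two kinds:
\begin{itemize}
\item a subset $T\subseteq\{1,\ldots,d\}$ of the pairwise disjoint cycles $C_j$;
\item the single cycle $\{D\}$.
\end{itemize}
The cycle $D$ meets every $C_j$, so it cannot be combined with any of them. A zero-weight listed cycle simply contributes $0$, which is why the hypothesis allows $b_j=0$.

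Summing the first kind of collection gives
\[
 \sum_{T}(-1)^{|T|}\prod_{j\in T}b_j\,z^{\,n-q|T|}
 =z^{n-qd}\prod_{j=1}^d(z^q-b_j),
\]
by expanding the product. The single cycle $D$ contributes $-z^{n-s}\prod_j(1-b_j)$. Together these give \eqref{eq:return-determinant}.

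The only real point needing care is the claim that the simple cycles are the right objects. Nonzero terms of the permutation expansion correspond exactly to decompositions into vertex-disjoint simple cycles, each of positive weight. Such a decomposition can therefore contain only listed cycles. No other step is an obstacle.
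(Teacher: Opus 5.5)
Your proposal is correct and follows the paper's own route. You expand $\det(zI-A)$ over permutations, split each diagonal entry into $z$ and $-A_{ii}$, and assign each selected cycle one net minus sign; the only surviving collections are then subsets of the disjoint $C_j$, giving $z^{n-qd}\prod_j(z^q-b_j)$, or $D$ alone, giving $-z^{n-s}\prod_j(1-b_j)$. Your explicit remark that a positive loop can only occur as some $C_j$ when $q=1$ covers a small point the paper leaves implicit.
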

\begin{proof}
We use the usual permutation expansion of $\det(zI-A)$, which selects
one entry from each row and each column. When a diagonal entry
$z-A_{ii}$ is selected, expand it as the two choices $z$ and $-A_{ii}$.
In each resulting nonzero term, the selected entries from $-A$ form
directed cycles with no shared vertex: each vertex involved has exactly
one selected outgoing edge and one selected incoming edge. Vertices
outside these cycles contribute factors $z$.

A selected cycle of length $\ell$ has permutation sign
$(-1)^{\ell-1}$, while its $\ell$ entries from $-A$ contribute a sign
$(-1)^\ell$. Together these give one minus sign for the cycle,
regardless of its length. Its remaining factor is the product of its
edge weights.

First select any subset $J\subseteq\{1,\ldots,d\}$ of the cycles $C_j$.
They occupy $q|J|$ vertices and have total contribution
\[
 (-1)^{|J|}z^{n-q|J|}\prod_{j\in J}b_j.
\]
Summing over all choices gives
\[
 \sum_{J\subseteq\{1,\ldots,d\}}
 (-1)^{|J|}z^{n-q|J|}\prod_{j\in J}b_j
 =z^{n-qd}\prod_{j=1}^d(z^q-b_j).
\]
This is the first term of \eqref{eq:return-determinant}: the $j$th
factor chooses $z^q$ when $C_j$ is omitted and $-b_j$ when it is included.

If $D$ is selected, none of the $C_j$ can also be selected, because
$D$ shares a vertex with each of them. The cycle $D$ occupies $s$
vertices, so its contribution is
\[
 -z^{n-s}\prod_{j=1}^d(1-b_j).
\]
By hypothesis there are no other positive-weight simple cycles.
These choices therefore account for every nonzero term in the
determinant expansion and give the full characteristic polynomial,
including algebraic multiplicities.
\end{proof}

\begin{lemma}
\label{lem:isospectral-return-form}
Under the hypotheses of Theorem~\ref{thm:boundary-polynomial}, let $P$ be
any stochastic matrix of order $n$ with eigenvalue $\lambda$.
There is a stochastic matrix $A$ with the same characteristic polynomial
as $P$ whose positive-weight simple cycles are among
$C_1,\ldots,C_d,D$. The cycles $C_j$ have length $q$ and pairwise
disjoint vertex sets, with weights $0\le b_j<1$; zero weights are allowed.
The cycle $D$ has length $s$, meets every $C_j$, and has weight
$\prod_j(1-b_j)$. The return paths satisfy the argument identity for the
Farey interval of $\lambda$ in \cite[Theorem~4.1]{VerbekenGinisPolygon2026}.
The reduction can keep fixed the contact index $\kappa$ and the positive
product $\prod_i a_i$ supplied by Lemma~\ref{lem:positive-contact}.
\end{lemma}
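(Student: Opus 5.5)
We start from the Dmitriev--Dynkin normal form supplied by Lemma~\ref{lem:positive-contact}. Take an eigenvector $v$ of $P$ for $\lambda$, relabel so that its coordinates are the $n$ distinct vertices of $K(v)$ in counterclockwise order, and write $P=C^\kappa B(\gamma)$. Every row then has at most two positive entries: $a_i>0$ at column $i-\kappa$'s image position $R$, and $\gamma_i\ge0$ at the neighbouring position $S$. The directed graph of $P$ is therefore a union of the two circulant shifts $R$ and $S$, with $R$ always present. The idea is to deform the weights $\gamma_i$ while keeping the characteristic polynomial fixed, and so reach a matrix whose simple cycles are as described.

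The characteristic polynomial of such a two-shift matrix is a sum, over disjoint families of simple cycles, of products of the factors $a_i$ and $\gamma_i$. These cycles are the cyclic words in $R$ and $S$ that return to their start. We plan to group the rows along the orbits of $R$ and to use the argument identity of \cite[Theorem~4.1]{VerbekenGinisPolygon2026}, attached to the Farey interval of $\lambda$. That identity should single out the return lengths: the words that close up first after $q$ steps, giving the cycles $C_j$, and a single longer word of length $s$, giving $D$. It should also show that any other closed word has a winding incompatible with the eigenvector polygon, or has zero weight.

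Next we perform the invariant-polygon reductions of \cite{VerbekenGinisPolygon2026}, this time as polynomial-preserving moves. Within each $C_j$ we move the $\gamma$-mass so that only one edge of the cycle carries the factor $b_j$; this changes neither the cycle weights nor their mutual disjointness. At the same time we concentrate the remaining $a$-factors onto $D$. We will check that the weight of $D$ equals $\prod_i a_i$ restricted to its edges, which becomes $\prod_j(1-b_j)$ once each cycle $C_j$ has its complementary weight concentrated on one edge. Because each move changes only entries within a single row, and does so in ways that leave every product over a cycle family unchanged, the coefficients given by the permutation expansion in Lemma~\ref{lem:return-determinant} stay the same. The moves also keep $\kappa$ and $\prod_i a_i$ fixed.

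The main obstacle is showing that no other positive-weight simple cycle survives. In particular we must rule out a second long cycle, and cycles that share vertices with only some of the $C_j$. We expect to handle this with a minimality argument: if an extra cycle existed, we would collapse it using the polygon reduction and obtain a stochastic matrix of order $n-1$ with eigenvalue $\lambda$, which contradicts $\lambda\notin\Theta_{n-1}$. Once the cycle structure is established, the claimed form of $A$ follows.
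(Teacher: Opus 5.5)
Your overall strategy, running the invariant-polygon reductions of \cite{VerbekenGinisPolygon2026} while keeping the characteristic polynomial fixed, is the right one. There is a genuine gap, however: you never supply a move that keeps the polynomial fixed, and the move you describe cannot exist. In the pattern $C^\kappa B(\gamma)$, a nontrivial change of one pair $(a_i,\gamma_i)$ changes $a_i$. That changes the weight of the orbit of $j\mapsto j+\kappa$ through that row, which is a cycle all of whose weights are positive. So no one-row change leaves every cycle-family product unchanged, and a one-row (rank-one) change generally alters $\det(zI-A)$. The paper's move is a similarity instead. Suppose $\gamma_i=t>0$ and $\gamma_{i+1}=0$. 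Replace row $i$ of $B$ by the identity row to get $\widehat B$, and put $H=C^\kappa\widehat B$. Then $A=HU_i(t)$, and $A'=U_i(t)H=U_i(t)AU_i(t)^{-1}$ is stochastic with the same polynomial. It is again of contact form, with $a_i'=1$ and $a_{i+\kappa}'=a_ia_{i+\kappa}$. This move changes two rows and transports a positive $\gamma$ from index $i$ to $i+\kappa$. It preserves $\kappa$ and $\prod_i a_i$, and it replaces $v$ by $U_i(t)v$, which shrinks the polygon. The condition $\gamma_{i+1}=0$ is exactly what makes the factorisation valid.

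You also assume that $C_1,\ldots,C_d,D$ are already visible in $P$, with the argument identity discarding the other closed words. When many $\gamma_i$ are positive, $P$ has positive-weight simple cycles of many lengths, and no winding consideration removes them. The identity of \cite[Theorem~4.1]{VerbekenGinisPolygon2026} concerns the return paths of the reduced form, not $P$. The missing step is an extremal selection. Consider the set of pairs $(A,v)$ with $A=C^\kappa B(\gamma)$, polynomial $f$, $\prod_i a_i=c$ (so $a_i\ge c>0$), and a normalised eigenvector whose coordinates are $n$ distinct cyclically ordered vertices. This set is compact; the minimal order is what keeps limits in it. Choose a pair with the fewest positive $\gamma_i$ and then the least area. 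Minimality forces the positive indices into one cyclic run with no collisions under $i\mapsto i+\kappa$, so \cite[Proposition~2.5]{VerbekenGinisPolygon2026} applies with $f$, $\kappa$, $c$ fixed. The first-return theorem then needs boundary contact for every invariant polygon with at most $n$ vertices. That uses $\lambda\in\partial\Theta_n$, not merely $\lambda\notin\Theta_{n-1}$. This produces $d$ bases, padded by bases with $b_j=0$ when fewer than $d$ coefficients $\gamma_i$ are positive. If $\gcd(n,\kappa)=1$, $D$ is the $n$-cycle of $i\mapsto i+\kappa$. If $\gcd(n,\kappa)>1$, the $C_j$ are its orbits and $s<n$, so your identification of the weight of $D$ with a product of $a$-factors covers only the first case. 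Extra cycles are then excluded structurally: contracting probability-one segments leaves only the closing loops and one route through all bases. Your proposed collapse to order $n-1$ has no mechanism, since an extra positive cycle removes neither a state nor a polygon vertex.
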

\begin{proof}
Let $f$ be the characteristic polynomial of $P$. After the relabelling in
Lemma~\ref{lem:positive-contact}, we have $P=C^\kappa B(\gamma)$ with
$a_i>0$ for every $i$. Fix $f$, $\kappa$ and $c=\prod_i a_i>0$.
Consider all pairs $(A,v)$ in which $A=C^\kappa B(\gamma)$ has
characteristic polynomial $f$, satisfies $\prod_i a_i=c$, and has an
eigenvector $v$ for $\lambda$ whose coordinates are cyclically ordered
vertices and satisfy $\max_i|v_i|=1$. This set is nonempty and compact.
Indeed, $a_i\ge c>0$ because $a_i\le1$ and their product is $c$.
Limits retain the eigenvalue and normalisation, and the smallest possible
order $n$ excludes coinciding coordinates or coordinates that cease to
be vertices. Among all pairs with the fewest positive coefficients
$\gamma_i$, choose a pair $(A,v)$ whose polygon
$\conv\{v_0,\ldots,v_{n-1}\}$ has the smallest area.

Suppose $\gamma_i=t>0$ and $\gamma_{i+1}=0$. Define $U_i(t)$ to be
the identity matrix with row $i$ changed so that
$(U_i(t)x)_i=(1-t)x_i+tx_{i-1}$. This matrix is stochastic and
invertible because $t<1$. Replace row $i$ of $B$ by row $i$ of the
identity matrix to obtain $\widehat B$, and put $H=C^\kappa\widehat B$.
Then
\[
 A=HU_i(t),\qquad
 A'=U_i(t)H=U_i(t)A U_i(t)^{-1}.
\]
Thus $A'$ is stochastic and has the same characteristic polynomial $f$.

The calculation in \cite[Lemma~2.4]{VerbekenGinisPolygon2026} gives
\[
 a_i'=1,\qquad a_{i+\kappa}'=a_i a_{i+\kappa},\qquad
 a_j'=a_j\quad(j\notin\{i,i+\kappa\}).
\]
These identities preserve $c$ and the positivity of every $a_j$.
The new eigenvector is $v'=U_i(t)v$: it replaces $v_i$ by a point
strictly between $v_{i-1}$ and $v_i$, leaving all other coordinates
fixed. The smallest possible order and the contact theorem preserve
the vertex order, the contact representation and $\kappa$.

The number of positive $\gamma_i$ cannot increase under this operation,
and our choice prevents it from decreasing. Suppose their indices
formed more than one consecutive run in the cyclic order. Apply the
operation separately to the last index of each run. Each replacement
gives a proper subpolygon. If another vertex has modulus $1$, the
replacement preserves normalisation and decreases the area, contrary
to our choice. Thus every run would have to end at the unique vertex
of modulus $1$, which is impossible for two runs. The positive indices
therefore form one cyclic interval. If it does not contain every index,
repeating the operation moves a positive coefficient from index $i$ to
$i+\kappa$ modulo $n$. If the destination already has a positive
coefficient, the two combine and their number decreases. Such a collision is excluded by minimality.
The sequence of replacements in \cite[Proposition~2.5]{VerbekenGinisPolygon2026}
therefore applies, with $f$, $\kappa$ and $c$ fixed throughout.

The first-return theorem \cite[Theorem~3.5]{VerbekenGinisPolygon2026}
requires boundary contact for every invariant polygon with at most $n$
vertices. This holds because $\lambda$ is a boundary eigenvalue at its
smallest possible matrix order. That theorem argues by constructing
an invariant polygon with a vertex whose image lies strictly inside it.
Note that the auxiliary polygons do not need to preserve $f$ or $c$. The theorem applies to the pair
we selected and gives the paths between consecutive return bases used
in \cite[Theorem~4.1]{VerbekenGinisPolygon2026}.

A return base is a selected state at which one of these paths begins.
If $\gcd(n,\kappa)=1$, choose $d$ bases in their cyclic order. If fewer
than $d$ coefficients $\gamma_i$ are positive, add bases on paths whose
transitions have probability $1$, and give these added bases closing
weight $b_j=0$. At each base, one choice closes a cycle of length $q$
and weight $b_j$; the other continues towards the next base with weight
$1-b_j$. The $q$-cycles have disjoint vertex sets. Following every
continuation gives a cycle of length $s=n$. If $\gcd(n,\kappa)>1$,
the map $i\mapsto i+\kappa\pmod n$ instead has $d$ disjoint cycles of
length $q$, each with one base. The continuation choices join the bases
into a cycle of length $s<n=qd$.
In either case, contracting path segments whose transitions have
probability $1$ leaves the bases, a possible closing loop at each base,
and a directed cycle through all bases. A simple cycle either uses one
closing loop or follows the continuation route through every base.
Thus these are all the cycles of positive weight, and the cycle of
length $s$ shares a vertex with every cycle of length $q$.

Adding the changes of argument along these return paths identifies the
Farey interval between the endpoints of the given arc and gives the
argument identity in \cite[Theorem~4.1]{VerbekenGinisPolygon2026}.
\end{proof}

\begin{proof}[Proof of Theorem~\ref{thm:boundary-polynomial}]
Let $P$ be any stochastic matrix of order $n$ with eigenvalue $\lambda$.
Lemma~\ref{lem:isospectral-return-form} gives a matrix $A$ with the same
characteristic polynomial.
Lemma~\ref{lem:return-determinant} gives formula
\eqref{eq:return-determinant} for $\det(zI-A)$, with $0\le b_j<1$.
The argument identity is supplied by
Lemma~\ref{lem:isospectral-return-form}. Because $\lambda$ lies on the boundary,
its modulus attains the bound in \cite[Theorem~5.2]{VerbekenGinisPolygon2026}.
Equality forces every $b_j$ to equal the arc parameter $\beta$.
Since $\beta>0$, no selected closing weight can be zero, so none of
the auxiliary bases with closing weight $0$ can occur.
Substituting $b_j=\beta=1-\alpha$ gives
\[
 \det(zI-P)=\det(zI-A)
 =z^{n-qd}(z^q-\beta)^d-\alpha^d z^{n-s}.
\]
Thus every stochastic matrix of order $n$ with eigenvalue $\lambda$
has characteristic polynomial \eqref{eq:rigid-boundary-polynomial}.

Now suppose the arc is of Type III and its endpoints are ordered
$p/q<r/n$. We must show that, in the eigenvector ordering of the
states, every transition $i\to i+r\pmod n$ has positive probability.
The arc lies between the angles $2\pi(r-1)/n$ and $2\pi r/n$, so the
two possible transitions from state $i$ go to $i+r-1$ and $i+r$,
with indices modulo $n$.

Complex conjugation reflects the arc across the real axis and reverses
the order of its endpoint fractions. The endpoints of the conjugate
arc are therefore
\[
 \frac{n-r}{n}<\frac{q-p}{q}.
\]
In particular, its endpoint of denominator $n$ is on the left.

Suppose the probabilities of $i\to i+r$ were not all positive.
The contact theorem would then guarantee that every transition
$i\mapsto i+\kappa'$, with $\kappa'=n-r+1$, has positive probability
in the conjugate ordering. The reduction preserves this map.

If $\gcd(n,\kappa')=1$, the reduction gives an arc with right endpoint
$\kappa'/n$, whose denominator is $n$. This disagrees with the
conjugate arc, whose only endpoint of denominator $n$ is on the left.

If $\gcd(n,\kappa')>1$, the reduction gives an arc whose smaller
endpoint denominator divides $n$. The smaller denominator of the
conjugate arc is $q$. Since $q\ge2$, the identity $qr-pn=1$ rules
out $q\mid n$, so these endpoint denominators also disagree.

The conjugate arc has a unique pair of Farey endpoint fractions,
so both cases are impossible. Consequently, in the original ordering,
$P_{i,i+r}>0$ for every $i$, with indices modulo $n$.
\end{proof}

For the remainder of this section, let $\lambda$ lie strictly between the
endpoints of a Karpelevi\v{c} boundary arc, and let $n$ be the smallest
order of a stochastic matrix with eigenvalue $\lambda$. Write the
endpoint denominators as $q<s$, with $q\ge2$, and put
$d=\lfloor n/q\rfloor$. The arc parameter satisfies $0<\beta<1$.
In particular, $q+s>n$ and $n=\max(qd,s)$.

\begin{lemma}
\label{lem:typeii-branches}
Under the standing minimal-order assumptions, suppose $d\ge2$, and
let $t$ be the number of rows of $P$ with two positive entries.
Then
\[
 \begin{array}{ll}
 n=qd:& d\le t\le d+n-s,\\
 n=s>qd:& d\le t\le d+n-qd.
 \end{array}
\]
In the first case, the graph contains $d$ disjoint positive
$q$-cycles, each of weight $\beta$.
\end{lemma}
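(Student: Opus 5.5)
The plan is to count rows with two positive entries using the structure from Lemma~\ref{lem:positive-contact} together with the characteristic polynomial forced by Theorem~\ref{thm:boundary-polynomial}. Relabel the states as in the contact theorem, so that $P=C^\kappa B(\gamma)$ with every $a_i>0$. Then row $i$ has two positive entries exactly when $\gamma_i>0$. Hence $t$ is the number of positive $\gamma_i$, and the task is to bound this number from both sides using $\det(zI-P)=z^{n-qd}(z^q-\beta)^d-\alpha^dz^{n-s}$.

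\emph{Lower bound $t\ge d$.} I would expand $\det(zI-P)$ over disjoint collections of directed cycles, as in the proof of Lemma~\ref{lem:return-determinant}. The coefficient of $z^{n-qd}$ in the forced polynomial is $(-1)^d\beta^d\ne0$. Since $n-qd\ne n-s$, this coefficient comes only from the first term, so some disjoint family of positive-weight cycles covers exactly $qd$ vertices. Each positive cycle of $P$ has weight $\prod(\text{entries})$, and the cycle made of the $\kappa$-shift alone has weight $\prod a_i$ along its vertices. I would then argue that each positive cycle not using any $\gamma$-edge is a cycle of the permutation $i\mapsto i+\kappa$. The first step is to check that $\kappa$-cycles cannot produce the needed $q$-cycles without $\gamma$-edges when $\gcd(n,\kappa)=1$ or when $q\nmid$ their length. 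From this, the $d$ disjoint $q$-cycles of total weight $\beta^d$ must each use at least one $\gamma$-edge, since each has weight less than $1$ while a cycle made only of probability-$1$ rows would have weight $1$. Disjointness then gives $t\ge d$. In the case $n=qd$, the same term exhibits $d$ disjoint positive $q$-cycles. To show each has weight exactly $\beta$, I would compare the Lemma~\ref{lem:isospectral-return-form} reduction, which preserves $f$ and produces closing weights $b_j=\beta$, against the coefficients of $z^{n-kq}$ for $k=1,\dots,d$. These coefficients are the elementary symmetric functions $e_k(\beta,\dots,\beta)$, which matches the weights of the $d$ disjoint $q$-cycles of $P$, provided those are the only cycle families of total length $kq$. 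Checking that proviso is a cycle-length argument using the fact that only lengths $q$ and $s$ can appear as positive simple cycles.

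\emph{Upper bound.} This is the main obstacle. I would cite the Type~II and Type~III classifications \cite{VerbekenGinisTypeII2026,VerbekenGinisTypeIII2026} for the fixed polynomial \eqref{eq:rigid-boundary-polynomial}. They describe all stochastic realisations and in particular bound the number of two-entry rows. I would expect the slack beyond $d$ to correspond to the zero-root multiplicity $n-s$ (Type~II) or $n-qd$ (Type~III), since extra $\gamma$-edges must create cycles whose contributions cancel in every coefficient other than those present. As a self-contained route, I would attempt the following. Each extra positive $\gamma_i$ beyond the $d$ return bases creates an alternative path, and summing argument changes along paths, as in \cite[Theorem~4.1]{VerbekenGinisPolygon2026}, forces it to lie on the stretch of the $s$-cycle (respectively the $q$-cycles) not shared with the other family. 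That stretch has at most $n-s$ (respectively $n-qd$) such vertices. Making this vertex count precise, and not overcounting the return bases themselves, is where I expect difficulty. If it fails, I would fall back entirely on the classification theorems.
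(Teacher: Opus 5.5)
\textbf{Gap 1: the upper bounds are not proved.} These bounds are the substance of the lemma. Citing the Type~II and Type~III classifications is the right source, and the paper does the same. But a citation is not an argument: you have to extract from those classifications the specific structure that produces the counts, and the proposal never does.

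In Type~II ($n=qd$), the paper uses that the states lie on $d$ disjoint $q$-cycles of weight $\beta$. The set $S_j$ of branching states on the $j$th cycle is nonempty and lies in a window of $h_j+1$ consecutive positions, with $h_j\ge0$ and $\sum_j h_j=n-s$. Then $t=\sum_j|S_j|\in[d,\,d+n-s]$.

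In Type~III ($n=s>qd$), the graph has a positive forward cycle through all $n$ states. The $t$ branching states fall into $d$ nonempty groups, and successive branching states from different groups are at least $q$ edges apart. If $k\ge d$ counts group changes around the cycle, the gaps give $n\ge kq+(t-k)\ge dq+t-d$.

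Your self-contained alternative is only a heuristic. It claims that extra $\gamma_i$ are confined to a stretch of at most $n-s$ (resp.\ $n-qd$) vertices, but nothing in the sketch shows this. Moreover, in Type~III the relevant object is the Hamiltonian cycle with grouped branchings, not a family of disjoint $q$-cycles.

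\textbf{Gap 2: the lower bound and the weight-$\beta$ claim fail as written.} A nonzero coefficient of $z^{n-qd}$ only yields \emph{some} positive disjoint cycle family on $qd$ vertices, which is too weak in both types:
\begin{itemize}
\item In Type~II, $qd=n$, and the permutation $i\mapsto i+\kappa$ is always such a family because every $a_i>0$. So nothing is learned.
\item In Type~III, the family could be a single cycle, giving at most $t\ge1$.
\end{itemize}
To upgrade this to $d$ disjoint $q$-cycles, and to run the elementary-symmetric comparison giving weight $\beta$, you need that the positive simple cycles of $P$ itself have lengths only $q$ and $s$. That fact is established in Lemma~\ref{lem:isospectral-return-form} only for the reduced matrix $A$. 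That matrix is chosen as a minimiser in an isospectral set and shares with $P$ only $f$, $\kappa$ and $\prod_i a_i$. The similarities $U_i(t)(\cdot)U_i(t)^{-1}$ change the positive pattern, so neither the cycle structure nor the closing weights transfer to $P$.

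There is a partial repair for the lower bound alone. Since $A$ minimises the number of positive $\gamma_i$ over a set containing $P$, you get $t\ge t(A)$. The proof of Theorem~\ref{thm:boundary-polynomial} excludes zero-weight auxiliary bases in $A$, which gives $t(A)\ge d$. This still says nothing about $q$-cycles of weight $\beta$ in $P$. In the paper, both $t\ge d$ and the $d$ disjoint $q$-cycles of weight $\beta$ come directly from the classifications applied to $P$.
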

\begin{proof}
Theorem~\ref{thm:boundary-polynomial} identifies the characteristic
polynomial before either classification is applied.
For Type~II, $1\le n-s<q$, $\gcd(q,n-s)=1$ and $0<\alpha<1$.
The parametrisation in \cite[Theorem~4.1]{VerbekenGinisTypeII2026}
arranges the states into $d$ disjoint directed $q$-cycles, each of
weight $\beta$. At every state, one transition follows its $q$-cycle.
Some states also have a second transition to the next $q$-cycle,
with the last cycle followed by the first. These and only these
states have two positive entries in their rows.

Let $S_j$ be the set of states with this second transition on cycle
$j$, for $1\le j\le d$. The classification provides nonnegative
integers $h_1,\ldots,h_d$, with $\sum_j h_j=n-s$, such that each
$S_j$ is nonempty and is contained in an interval of $h_j+1$
consecutive positions on its cycle. Consequently,
\[
 t=\sum_{j=1}^d |S_j|,\qquad
 d\le t\le\sum_{j=1}^d(h_j+1)=d+n-s.
\]

For Type~III, $1\le n-qd<q$ and $\gcd(q,n)=1$.
After relabelling, the graph has a positive forward cycle through
all $n$ states. A row has two positive entries precisely when its
state has an additional outgoing transition besides the cycle edge;
call these the branching states.
The classification in
\cite[Theorem~1.2 and Lemma~3.2]{VerbekenGinisTypeIII2026}
divides the $t$ branching states into $d$ nonempty groups.
Successive branching states from different groups are separated
by at least $q$ edges along the forward cycle.

List the branching states in the order of that cycle. A gap is the
number of forward edges from one listed state to the next, including
the gap from the last back to the first. The $t$ gaps sum to $n$.
Let $k$ count changes of group between successive listed states.
Since all $d\ge2$ groups occur, $k\ge d$.
The $k$ gaps between different groups have length at least $q$,
and each remaining gap has length at least one. Hence
\[
 n\ge kq+(t-k)\ge dq+t-d.
\]
The nonempty groups also give $t\ge d$, so
$d\le t\le d+n-qd$.
\end{proof}

\begin{theorem}
\label{thm:minimal-arc-exclusion}
Let $n\ge4$, and let $A$ be a Karpelevi\v{c} boundary arc with endpoint
denominators $q<s$, where $q\ge2$. For each $\lambda$ between the
endpoints of $A$, suppose that a stochastic matrix of order $n$ has
eigenvalue $\lambda$ and that none of smaller order does.
If
\[
 1\le m<\left\lceil\frac{\lfloor n/q\rfloor}{2}\right\rceil,
\]
then the only points of $A$ in $\Omega_{n,m}$ are its two endpoints
on the unit circle.
The range for $m$ is nonempty only when $\lfloor n/q\rfloor\ge3$.
\end{theorem}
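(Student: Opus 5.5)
Fix a point $\lambda$ of $A$ strictly between its endpoints and suppose, for contradiction, that $\lambda\in\Om_{n,m}$. Choose $P\in\Q_{n,m}$ with eigenvalue $\lambda$. If $\lambda$ is real, the arc is the exceptional segment or reaches $-1$; the minimal-order hypothesis with $n\ge4$ excludes this, so we assume $\lambda$ is nonreal. By Theorem~\ref{thm:boundary-polynomial}, $P$ has characteristic polynomial \eqref{eq:rigid-boundary-polynomial}. Lemma~\ref{lem:positive-contact} applies in the eigenvector ordering of the states, so $P=C^\kappa B(\gamma)$ with every $a_i>0$. Each row of $P$ then has one or two positive entries, and they lie at two cyclically adjacent positions in the eigenvector order. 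Put $d=\floor{n/q}\ge3$. The hypothesis $m<\ceil{d/2}$ gives $2m\le d-1$, so the family has at least $n-2m\ge n-d+1$ interior rows. Since $q\ge2$, this is more than half of the rows.

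The key step is to count rows with two positive entries, using Lemma~\ref{lem:typeii-branches}. That lemma gives $t\ge d\ge 2m+1$ when $d\ge2$. The interior rows of $P$ are cyclic shifts of one probability vector $c$, so they all have the same number of positive entries. If that number is one, then all branching rows lie among the $2m<d$ boundary rows, and this contradicts $t\ge d$. If it is at least three, this contradicts the contact structure. Hence every interior row has exactly two positive entries, and $t\ge n-2m$. The upper bounds in Lemma~\ref{lem:typeii-branches} give $t\le d+n-s$ in the case $n=qd$, and $t\le d+n-qd$ in the case $n=s>qd$. In both cases $t\le d+q-1$, because $q+s>n$ and $n-qd<q$. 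We therefore get $n-2m\le d+q-1$, that is, $qd-(d-1)\le n-2m\le d+q-1$, which rearranges to $(q-2)(d-2)\le0$ after using $n\ge qd$.

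This settles $q\ge3$, since then $d\ge3$ makes the product positive. The main obstacle is $q=2$, where the count is not strict. Here I would use finer structure. In Type~II with $q=2$, Lemma~\ref{lem:typeii-branches} places the states in $d$ disjoint $2$-cycles of weight $\beta$. Consider an interior state $i$ whose successor within its $2$-cycle is also interior. That successor row is the shift of row $i$, so its transitions are the shifted ones, $i+1\mapsto$ the positions of $i$'s targets plus one. The $2$-cycle through $i$ and $i+h$ then forces $h\equiv-h$, that is, $2h\equiv0\pmod n$. Together with the branch transitions to the next $2$-cycle, this propagates along the interior block. Tracing the displacement rule $R(i)=i+h$, $S(i)=i+h'$ around the graph should then close a positive cycle of length other than $q$ or $s$, which contradicts Lemma~\ref{lem:return-determinant}. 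If it does not, it should produce more than $d$ disjoint $2$-cycles inside the interior, which contradicts the characteristic polynomial.

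In Type~III, Theorem~\ref{thm:boundary-polynomial} gives $R(i)=i+r$ and $S(i)=i+r-1$ in the eigenvector labelling. The task there is to reconcile this labelling with the family labelling. Because the interior rows are more than half the states and their positions are shifts, the permutation between the two labellings should be forced to be affine on a long interval. The contradiction would then come from the gap condition in Lemma~\ref{lem:typeii-branches}: consecutive branching states from different groups must be separated by at least $q$ edges, yet here all interior states branch.

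Finally, the endpoints lie on the unit circle and are roots of unity of order at most $n$. They are attained already for $m=1$: an order-$q$ cycle is realised by a permutation in $\Q_{n,1}$, and the other endpoint comes from Theorem~\ref{thm:chains} with $\alpha=1$. The last assertion holds because $1\le m<\ceil{d/2}$ needs $\ceil{d/2}\ge2$, that is, $d\ge3$.
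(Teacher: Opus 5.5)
\textbf{Verdict: the proposal has a genuine gap in the case $q=2$; the case $q\ge3$ is fine.}

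For $q\ge3$ your count is correct and agrees with the paper's. A small slip: the inequality rearranges to $(q-2)(d-1)\le0$, not $(q-2)(d-2)\le0$, which does not affect the conclusion. For $q=2$, the Type~III part of the difficulty comes from your own weakening. You replaced both sides by $qd-d+1\le n-2m\le t\le d+q-1$, which throws away exactly what makes the count strict. Keep $n=qd+r$ with $0<r<q$ on both sides instead. Then $qd+r-2m\le t\le d+r$ gives $2m\ge(q-1)d\ge d$, which contradicts $2m<d$ for every $q\ge2$. No reconciliation of labellings is needed. The sketch you give for one (the permutation ``should be forced to be affine'') is not an argument in any case.

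In Type~II with $q=2$, the sketch does not work as written.
\begin{itemize}
\item The partner of $i$ in its $2$-cycle is not the label $i+1$, so the shift relation between consecutive rows says nothing directly about it.
\item All you know is that interior rows send mass to $i+k_1$ and $i+k_2$. An interior--interior $2$-cycle could a priori be $i\to i+k_1\to i+k_1+k_2=i$ with $k_1\ne k_2$, so $2h\equiv0$ is unjustified.
\item Lemma~\ref{lem:return-determinant} only computes a determinant under a hypothesis on the cycles, so an extra cycle would not contradict it.
\item The remaining steps are stated only as what ``should'' happen.
\end{itemize}

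The missing idea is that the count is tight in this case. With $n=2d$, $1\le n-s\le q-1=1$ and $2m\le d-1$, the inequalities $n-2m\le t\le d+n-s$ force $2m=d-1$ and $t=d+1=n-2m$. So the rows with two positive entries are exactly the interior rows, and every boundary row is a single entry $1$. Now argue with transition probabilities rather than displacements alone:
\begin{enumerate}
\item The $d$ disjoint $2$-cycles of weight $\beta$ pair all the states.
\item A boundary--boundary pair would have weight $1$. Hence each of the $d-1$ boundary states is paired with an interior state, whose return probability must be $\beta$. So $\beta$ is one of the two entries of $c$.
\item In the one remaining interior--interior pair, an entry $\beta$ would need a partner equal to $1$. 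So both transitions carry the other entry $\gamma=\sqrt\beta\ne\beta$, and therefore the same displacement $h$. This gives $2h\equiv0$, $h\ne0$, so $h=d$.
\item The interior interval $\{a,\dots,a+d\}$ then has its two endpoints as that pair. The $\beta$-displacement $k$ maps $I'=\{a+1,\dots,a+d-1\}$ bijectively onto $B_{n,m}=I'+d$.
\item A proper cyclic interval has no nontrivial translation symmetry, so $k=d=h$. This contradicts the fact that $\beta$ and $\gamma$ sit at different positions of $c$.
\end{enumerate}

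For the endpoints, take $r=1$ in Theorem~\ref{thm:chains}, or simply use an $s$-cycle, so that the matrix lies in $\Q_{n,1}$.
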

\begin{proof}
Suppose that $P\in\Q_{n,m}$ has an eigenvalue $\lambda$ on the arc
$A$, distinct from its two endpoints. In the fixed cyclic order,
\[
 P_{ij}=c_{j-i\bmod n}\qquad(i\in I_{n,m})
\]
for one probability vector $c$. Put $d=\lfloor n/q\rfloor$ and
$b=2m<d$; since $m\ge1$, we have $d\ge3$. The classifications may
relabel the states: their row counts still apply, but their cycle
order need not be this fixed cyclic order.

The Dmitriev--Dynkin contact theorem
(Lemma~\ref{lem:positive-contact}) places $\lambda$ times each
eigenvector coordinate on an edge of the polygon formed by the
coordinates. The corresponding row of $P$ expresses that point as a
convex combination of the polygon's vertices, so only the two endpoints
of that edge can have positive coefficients. Hence every row of $P$
has at most two positive entries.
Let $t$ count the rows with two positive entries. If $c$ had only
one positive entry, then $t\le b<d$, contrary to
Lemma~\ref{lem:typeii-branches}. Thus $c$ has exactly two positive
entries, every interior row has two, and $n-b\le t$.

First suppose $n=qd$. Put $z=n-s$, so $1\le z\le q-1$.
For $q\ge3$, the upper bound $t\le d+z$ contradicts
\[
 n-b-(d+z)\ge d(q-2)+1-z\ge(d-1)(q-2)>0.
\]
For $q=2$, the bounds force $n=2d$, $b=d-1$ and $t=d+1$.
The $d+1$ interior rows are therefore exactly the rows with two
positive entries. Every boundary row has a single positive entry,
equal to $1$.

The $d$ disjoint two-cycles of weight $\beta$ partition all $2d$
states into pairs. For a pair $\{x,y\}$, the two transitions are
$x\to y$ and $y\to x$, with $P_{xy}P_{yx}=\beta$.
Two boundary states cannot form a pair: both transition probabilities
would be $1$, giving cycle weight $1>\beta$.
Thus each of the $d-1$ boundary states is paired with an interior
state. The two remaining interior states form the unique pair
contained entirely in the interior.

In a pair consisting of one boundary state and one interior state,
the transition from the boundary state to the interior state has
probability $1$. The reverse transition therefore has probability
$\beta$, so $\beta$ is one of the two common probabilities in $c$.
Now consider the pair of two interior states. Both positive entries
in an interior row are strictly below $1$. If either transition
between these two states had probability $\beta$, their product
would be less than $\beta$. Both transitions must therefore have
the other common probability, say $\gamma\ne\beta$, with
$\gamma^2=\beta$.

Let $h$ be the common displacement of transitions with probability
$\gamma$: every interior state $i$ sends that probability to
$i+h\pmod{2d}$. In the pair of interior states, these transitions
go in opposite directions between the same two distinct states.
Thus $2h=0\pmod{2d}$ and $h\ne0$, so $h=d$.

Write the interior interval as $\{a,a+1,\ldots,a+d\}$, with indices
modulo $2d$. Its only pair of states separated by $d$ consists of
its endpoints, so these are the two interior states paired with
each other. Removing them leaves
$I'=\{a+1,\ldots,a+d-1\}$.
Each state in $I'$ is paired with a boundary state, and its transition
to that state has probability $\beta$. Let $k$ be the common
displacement of that probability in $c$. These transitions map
$I'$ bijectively onto $B_{n,m}$, so $I'+k=B_{n,m}$.
Shifting $I'$ by $d$ gives $\{a+d+1,\ldots,a+2d-1\}$, exactly
the boundary states. Hence
\[
 I'+k=B_{n,m}=I'+d.
\]
A proper cyclic interval has no nontrivial translation symmetry,
so $k=d=h$. The two positive entries of $c$ must occur at distinct
displacements, giving a contradiction.

Now suppose $n=s=qd+r$, where $0<r<q$.
Lemma~\ref{lem:typeii-branches} gives
$n-b\le t\le d+r$. Thus $b\ge d(q-1)\ge d$, again a contradiction.

The chain construction in Theorem~\ref{thm:chains} realises both
endpoints with $m=1$, and hence for every $m\ge1$.
\end{proof}

\section{Geometry below saturation}
\label{sec:regions}

The preceding results determine when $\Omega_{n,m}=\Theta_n$.
For smaller $m$, we show that the distance from $0$ to the boundary
varies continuously with direction, and give optimal bounds on the
angles at which the region approaches roots of unity.
Theorem~\ref{thm:local-polygon} gives local equality with the circulant
polygon near certain roots of unity whose orders divide $n$, under
its stated bound on $m$. Explicit formulas for the full boundary
remain unknown in general.

\begin{proposition}
\label{prop:padding}
If $3\leq k\leq n$ and $m\geq\ceil{(k-1)/4}$, then
\begin{equation}
 \Theta_k\subseteq\Om_{n,m}.
 \label{eq:padding}
\end{equation}
For every $m\geq1$, the unit-circle points of $\Om_{n,m}$ are exactly
the roots of unity of orders at most $n$.
\end{proposition}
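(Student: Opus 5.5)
For the inclusion \eqref{eq:padding}, the plan is to pad a realisation of order $k$ to order $n$ without disturbing its eigenvalue or the row pattern. By the segment argument following \eqref{eq:upper}, applied at order $k$, it suffices to put every point $w\in\partial\Theta_k$ into $\Om_{n,m}$. The filling property \eqref{eq:fill-from-one} then gives the rest, since $\Om_{n,m}$ contains the segment from $1$ to $w$. Boundary points of $\Theta_k$ are either on Type~0 segments or on arcs for the order-$k$ Farey data $q<s\le k$, $q\ge2$, parametrised by \eqref{eq:ito} with $d_k=\floor{k/q}$.

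For Type~0 segments of $\Theta_k$, i.e.\ $(1-\alpha)+\alpha e^{\pm2\pi i/k}$, we apply Theorem~\ref{thm:chains} with $q=k$, $r=1$, $s=k$. This requires $qr=k\le n$, which holds. It gives a matrix in $\Q_{n,1}$ with characteristic polynomial $z^{n-k}(z^k-1+\alpha)-\alpha z^{n-k}=z^{n-k}(z^k-1)$. That polynomial is not the right one, so the choice fails. Instead we take $q=1$ formally, or more simply realise the segment directly. We embed $(1-\alpha)I_k+\alpha C_k$ as a block on $k$ consecutive states, with the other $n-k$ states feeding deterministically into it along the shift $i\mapsto i+1$. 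Only the row closing the $k$-cycle deviates from $i\mapsto i+1$, and mixing with $I$ on interior rows is uniform. Relabelling puts the one or two deviating rows at the boundary, so $m\ge1$ suffices.

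For arcs with $q\ge2$, we use Theorem~\ref{thm:chains} at order $n$ with $r=d_k=\floor{k/q}$ and the same $q,s,\alpha$. The hypotheses $qr\le k\le n$ and $r\le s\le k\le n$ hold. The resulting polynomial $z^{n-qr}(z^q-\beta)^r-\alpha^rz^{n-s}$ equals $z^{n-k}$ times the order-$k$ Ito polynomial, so it has the selected arc root. The matrix lies in $\Q_{n,\ceil{r/2}}$, and $\ceil{r/2}\le\ceil{\floor{k/2}/2}=\ceil{(k-1)/4}\le m$. Monotonicity of $\Q_{n,m}$ in $m$ finishes \eqref{eq:padding}. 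In the Type~0 case we need $m\ge1$, which follows from $k\ge3$.

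For the unit-circle statement, the easy direction is that every root of unity of order $k\le n$ lies in $\Om_{n,m}$ when $m\ge1$. For $k\ge2$, take the $\alpha=0$ endpoint of a chain with $q=k$, $r=1$. The corresponding matrix contains a deterministic $k$-cycle and lies in $\Q_{n,1}$. The hard direction is the converse: a point $\lambda\in\Om_{n,m}$ with $|\lambda|=1$ has $\lambda$ as an eigenvalue of a stochastic matrix of order $n$. By the Perron--Frobenius structure of peripheral eigenvalues (used in Theorem~\ref{thm:tangent}), $\lambda$ is a root of unity whose order is the period of some closed class, hence at most $n$.

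The main obstacle is the Type~0 padding step. I would first check carefully that the feeding chain really preserves the cyclic-shift pattern across the junction between the block and the padding states. If it does not, the fallback is the cruder inclusion $\conv\{1,e^{\pm2\pi i/k}\}\subseteq H_n$ whenever $k\mid n$. Otherwise I would use the endpoint $e^{2\pi i/k}\in\Om_{n,1}$ together with \eqref{eq:fill-from-one} with $a=1$, which is the cleanest route and makes the special construction unnecessary.
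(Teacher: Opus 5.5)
Your proposal is correct and follows the paper's proof essentially step for step. You pad each order-$k$ arc with $q\ge2$ by applying Theorem~\ref{thm:chains} with $r=\lfloor k/q\rfloor$, which multiplies the Ito polynomial by $z^{n-k}$, you get the Type~0 segments from a deterministic $k$-cycle on the last $k$ states mixed with $I$, and you handle the unit-circle converse through closed classes of period $h\le n$. Your ``cleanest route'' via \eqref{eq:fill-from-one} with $a=1$ is exactly the paper's $P_t=(1-t)I+tP$, and the feeding construction also works provided the mixing with $I$ is applied to every row, not just the cycle block. The only slip is cosmetic: a peripheral eigenvalue's order divides, rather than equals, the period of its closed class, which still bounds the order by $n$.
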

\begin{proof}
For a boundary arc of $\Theta_k$ with smaller endpoint denominator
$q\ge2$, put $r=\lfloor k/q\rfloor$ and let $s$ be the other endpoint
denominator. Theorem~\ref{thm:chains} gives a stochastic matrix of order
$n$ whose characteristic polynomial is given by
\[
 z^{n-k}\chi_{k;q,s,\alpha}(z).
\]
Thus each point of the arc occurs as an eigenvalue of one of these
matrices. Because $r\le\lfloor k/2\rfloor\le2m$, the construction
belongs to $\Q_{n,m}$.

For the two Type 0 arcs, use a Markov chain on $n$ states whose last
$k$ states form a directed cycle. Let $P$ be its transition matrix.
Only the last row of $P$ can differ from the cyclic successor
transition $i\mapsto i+1\pmod n$, so $P\in\Q_{n,1}$.
For $0\le t\le1$, form the convex combination
\[
 P_t=(1-t)I+tP.
\]
At each step, this chain stays in its current state with probability
$1-t$ and follows the transition rule of $P$ with probability $t$.
The matrix $P_t$ still belongs to $\Q_{n,1}$ and has eigenvalues
$1-t+t e^{\pm2\pi i/k}$. As $t$ varies from $0$ to $1$, these trace
the two Type 0 segments.
These constructions cover $\partial\Theta_k$. Every point of
$\Theta_k$ lies on a segment joining $1$ to a boundary point, and
\eqref{eq:fill-from-one} puts that whole segment in $\Omega_{n,m}$.

The same chain with a final cycle of length $q$, for $1\le q\le n$,
realises every $q$th root of unity with $m=1$.
Conversely, every eigenvalue of modulus $1$ of a stochastic matrix
comes from a closed communicating class of states. If that class has
period $h$, the eigenvalue is an $h$th root of unity. Since $h$ does
not exceed the number of states in the class, we have $h\le n$.
\end{proof}

At order $2$, $m=0$ already gives every eigenvalue in
$\Theta_2=[-1,1]$. Note that this does not imply
$\Theta_2\subseteq\Omega_{n,0}$ at larger orders: when $n$ is odd,
$-1\notin\Omega_{n,0}=H_n$. Thus a construction that works with
$m=0$ need not retain that value of $m$ when the matrix order increases.
Taking $k=4$ in \eqref{eq:padding} gives
$\Theta_4\subseteq\Omega_{n,m}$ for $n\ge4$ and $m\ge1$.

\begin{theorem}
\label{thm:radial}
Let $n\ge4$ and $0\le m\le\lfloor n/2\rfloor$.
There is a positive continuous semialgebraic function $R_{n,m}$ on the
unit circle such that
\begin{equation}
 \Om_{n,m}=\{ru:|u|=1,\ 0\le r\le R_{n,m}(u)\}.
 \label{eq:radial-region}
\end{equation}
Thus $R_{n,m}(u)$ is the greatest distance from $0$ reached by the region
in direction $u$. The boundary is the simple closed curve traced by
$u\mapsto R_{n,m}(u)u$, and the region is homeomorphic to the closed disc.
The boundary is a finite union of real-analytic arcs and their endpoints.
Writing $z=x+iy$, each arc lies on a curve $F(x,y)=0$ for some
nonzero real polynomial $F$.
\end{theorem}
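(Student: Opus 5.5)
The plan is to show first that $\Om_{n,m}$ is star-shaped about $0$ with $0$ in its interior, then that the radial function is continuous, and finally that the region is semialgebraic.

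\emph{Star-shapedness.} By \eqref{eq:fill-from-one}, for $\lambda\in\Om_{n,m}$ the triangle $\conv\{-1/(n-1),1,\lambda\}$ lies in $\Om_{n,m}$. Since $0$ lies strictly between $-1/(n-1)$ and $1$, this triangle contains the segment $[0,\lambda]$. Hence $\Om_{n,m}$ is star-shaped about $0$. Put
\[
 R_{n,m}(u)=\max\{r\ge0:ru\in\Om_{n,m}\}.
\]
The maximum exists by compactness, and it gives \eqref{eq:radial-region}. Positivity comes from $H_n=\Om_{n,0}\subseteq\Om_{n,m}$: for $n\ge4$, $H_n$ contains a disc of radius $\cos(\pi/n)>0$ about $0$, so $R_{n,m}\ge\cos(\pi/n)$.

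\emph{Continuity.} Upper semicontinuity of $R_{n,m}$ follows from closedness of $\Om_{n,m}$. For lower semicontinuity, take $\lambda=R(u)u$ and use the filling property again. It contains the triangle with vertices $\lambda$, $1$, $-1/(n-1)$, and also, by star-shapedness, the hull of $\lambda$ with the inscribed disc $D$ about $0$. Because $\lambda\neq0$ and $D$ has positive radius, $\conv(D\cup\{\lambda\})$ contains points $r'u'$ with $r'\to R(u)$ as $u'\to u$. Concretely, $R(u')\ge R(u)-C|u'-u|$ with $C$ depending only on $\cos(\pi/n)$. So $R_{n,m}$ is continuous, in fact Lipschitz. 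Then $u\mapsto R(u)u$ is a simple closed curve, and the radial map $ru\mapsto ru/R(u)$ is a homeomorphism from the region onto the closed unit disc.

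\emph{Semialgebraicity.} The class $\Q_{n,m}$ is a polytope described by linear constraints in finitely many real parameters: the vector $c$ and the $2m$ boundary rows. The set
\[
 \{(P,x,y): P\in\Q_{n,m},\ \det((x+iy)I-P)=0\}
\]
is semialgebraic, since the real and imaginary parts of the determinant are polynomials. Its projection $\Om_{n,m}$ is then semialgebraic by Tarski--Seidenberg. The function $R_{n,m}$, written in a semialgebraic chart of the circle such as $u=e^{i\phi}$ replaced by rational coordinates, is definable by a first-order formula, namely the supremum over a semialgebraic set, and is therefore semialgebraic.

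\emph{Arc structure.} The boundary is a compact one-dimensional semialgebraic set. By the standard stratification or cell decomposition (Basu--Pollack--Roy), it is a finite union of points and of Nash (real-analytic) arcs. Each arc lies in the zero set of a nonzero polynomial $F(x,y)$, obtained from the Zariski closure of a one-dimensional semialgebraic set.

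The main obstacle I expect is making the lower-semicontinuity estimate rigorous, specifically checking that the cone from the inscribed disc $D$ to $\lambda$ really stays inside $\Om_{n,m}$. Star-shapedness only gives segments to $0$, whereas here the hull of $D$ with $\lambda$ is needed. This should follow from convexity of $\Q_{n,m}$ together with the filling property applied to points of $D$; I would first try \eqref{eq:fill-from-one} with the circulant points of $H_n$. If that fails, the fallback is a direct argument using mixtures of $P$ with circulant matrices having the needed eigenvalues.
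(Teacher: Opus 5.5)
\textbf{There is a genuine gap in your lower-semicontinuity step.} It rests on the cone inclusion $\conv(D\cup\{\lambda\})\subseteq\Om_{n,m}$, where $D$ is the disc of radius $\cos(\pi/n)$. This inclusion is false for every $m\ge1$, so neither convexity of $\Q_{n,m}$ nor \eqref{eq:fill-from-one} can supply it.

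To see this, take a primitive $q$th root of unity $\zeta$ with $q\nmid n$, for instance $q=n-1$.
\begin{itemize}
\item For $m\ge1$, $\zeta$ lies in $\Om_{n,m}$ by Proposition~\ref{prop:padding}, so $\zeta=R_{n,m}(\zeta)\zeta$.
\item By Theorem~\ref{thm:sharp-angles}, points of $\Om_{n,m}\setminus\{\zeta\}$ tending to $\zeta$ have limiting angles of $1-\overline\zeta\lambda$ of absolute value at most $\pi/2-\pi/N$. Here $N=q\min\{2m,\lfloor n/q\rfloor\}<n$.
\item The segment from $\zeta$ to $w\in D$ has constant angle $\arg(1-\overline\zeta w)$ near $\zeta$. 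Over $D$ this angle reaches $\arcsin(\cos(\pi/n))=\pi/2-\pi/n$ in absolute value.
\end{itemize}
So for suitable $w\in D$ the segment $[w,\zeta]$ leaves the region immediately. For example, with $n=4$, $m=1$ and $\zeta=e^{2\pi i/3}$, the admissible angles are $[-\pi/6,\pi/6]$. The point $w=0.7i\zeta\in D$ gives the angle $-\arctan 0.7$, which lies outside that interval. The mechanism behind \eqref{eq:fill-from-one} also cannot produce such segments. It moves eigenvalues linearly only towards real points $a$, because $M_a$ acts as a scalar on the quotient by $\one$.

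\textbf{The repair uses only the triangle you already named.} For nonreal $u$, put $\lambda=R_{n,m}(u)u$. Every $t\lambda$ with $0<t<1$ is an interior point of $\conv\{-1/(n-1),1,\lambda\}\subseteq\Om_{n,m}$. Hence $\liminf_{u'\to u}R_{n,m}(u')\ge tR_{n,m}(u)$ for all $t<1$.

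This degenerates at $u=\pm1$. The inscribed disc cannot replace it there, because it only gives radius $\cos(\pi/n)<1$. For $m\ge1$ one has $R_{n,m}(\pm1)=1$, so you need nonreal points of modulus close to $1$ in directions close to $\pm1$.
\begin{itemize}
\item Near $1$, the polygon $H_n$ supplies them.
\item Near $-1$ with $n$ odd, $H_n$ does not. The paper uses $H_4\subseteq\Theta_4\subseteq\Om_{n,m}$, from Proposition~\ref{prop:padding}.
\item The case $m=0$ is just the polygon.
\end{itemize}

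With this repair, your remaining steps agree with the paper: star-shapedness, positivity from $H_n$, upper semicontinuity, semialgebraicity by projection, and the finite arc decomposition of the boundary.

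Lipschitz continuity is true (Corollary~\ref{cor:lipschitz}), but not via a uniform cone from $D$. The paper uses interior balls of radius proportional to $|\operatorname{Im}\lambda|$ away from the real axis, and a separate semialgebraic derivative argument near $\pm1$.
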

\begin{proof}
For $m=0$, all claims follow from the polygon description in
\eqref{eq:circulant-polygon}. Assume $m\ge1$, and recall that every
entry of $J$ equals $1/n$.
By \eqref{eq:averaging-spectrum}, each segment from $0$ to a non-Perron
eigenvalue lies in the region; the segment $[0,1]$ already lies in $H_n$.
Compactness gives a maximum radius $R_{n,m}(u)$ in every direction.
By \eqref{eq:padding}, the region contains $\Theta_4$ and hence the
diamond $H_4$, so every radius is positive.

For nonreal $\lambda\in\Om_{n,m}$ and $0<t<1$, the point $t\lambda$
lies in the interior of the triangle in \eqref{eq:fill-from-one}.
Thus every such $t\lambda$ is an interior point of the region.

Compactness gives
$\limsup_{v\to u}R_{n,m}(v)\le R_{n,m}(u)$.
For nonreal $u$, the interior property just proved gives the opposite
inequality for the lower limit. Thus $R_{n,m}$ is continuous away from
$u=\pm1$. At those two directions, $R_{n,m}(\pm1)=1$.
The diamond $H_4$ gives lower bounds on nearby radii tending to $1$,
while the unit disc gives the upper bound $1$. Hence $R_{n,m}$ is
continuous everywhere.
Since it is positive and continuous, the radial map
\[
 ru\longmapsto rR_{n,m}(u)u,\qquad |u|=1,\quad 0\le r\le1,
\]
with $0\mapsto0$, is a homeomorphism from the closed disc onto the
region. Its restriction to the unit circle traces the boundary once.

Finally,
\begin{equation}
 \begin{gathered}
 x+iy\in\Om_{n,m}\quad\Longleftrightarrow\quad
 \exists P\in\Q_{n,m}:\\
 \operatorname{Re}\det((x+iy)I-P)=
 \operatorname{Im}\det((x+iy)I-P)=0.
 \end{gathered}
 \label{eq:polynomial-membership}
\end{equation}
Here $\exists P$ means that the real matrix entries are to be found
so that the conditions hold; it is an existential quantifier.
Real quantifier elimination replaces such an existence requirement
by equivalent finite combinations of polynomial equations and
inequalities in the remaining variables, here $x,y$.
These conditions have rational coefficients, so projection and quantifier
elimination make $\Omega_{n,m}$ and the graph of its maximum radius
semialgebraic over $\mathbb Q$ \cite{BasuPollackRoy2006}. The boundary is
therefore a one-dimensional semialgebraic set. Its decomposition into
real-analytic algebraic arcs is finite, and its being a simple closed
curve makes the remaining points endpoints of those arcs.
\end{proof}

The conclusions do not hold at order $3$ in general. For $m=1$, the family
contains every stochastic matrix of order $3$, and
\[
 \Omega_{3,1}=\Theta_3=H_3\cup[-1,-1/2].
\]
The radial function equals $1$ at direction $-1$ and tends to $1/2$
through nonreal directions, so it is discontinuous there.
Figure~\ref{fig:order-three-radial-jump} shows this exception.

\begin{figure}[htbp]
\centering
\includegraphics[width=.55\textwidth]{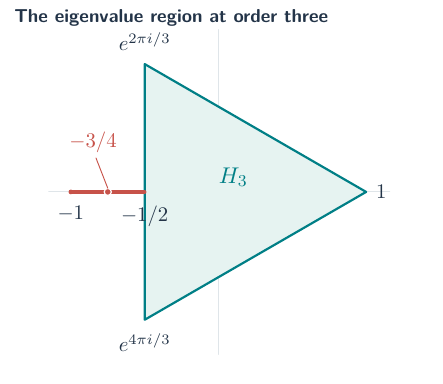}
\caption{At order $3$, $\Omega_{3,1}=\Theta_3=H_3\cup[-1,-1/2]$,
where $H_3$ is the triangle with vertices the cube roots of unity.
The greatest distance from $0$ is $1$ in the negative real direction,
whereas it tends to $1/2$ in nearby nonreal directions.
Removing the marked point $-3/4$ disconnects the region.}
\label{fig:order-three-radial-jump}
\end{figure}

\begin{corollary}
\label{cor:lipschitz}
For $n\ge4$ and $0\le m\le\lfloor n/2\rfloor$, the radial function
$R_{n,m}$ is Lipschitz continuous: there is a constant $L>0$ such that
\[
 |R_{n,m}(u)-R_{n,m}(v)|\le L|u-v|
 \qquad(|u|=|v|=1).
\]
As a function of the angle, $R_{n,m}(e^{i\theta})$ has finite left
and right derivatives at every $\theta$, and is real analytic except
at finitely many angles modulo $2\pi$.
\end{corollary}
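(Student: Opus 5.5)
The plan is to derive the corollary from the semialgebraic structure already established in Theorem~\ref{thm:radial}, together with a uniform positivity estimate that rules out vertical tangencies of the radial function. Write $\rho(\theta)=R_{n,m}(e^{i\theta})$. By Theorem~\ref{thm:radial}, $\rho$ is positive, continuous and semialgebraic in $(\cos\theta,\sin\theta)$. The argument then has three steps.

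\textbf{Step 1: finitely many bad angles and one-sided derivatives.} By the standard structure theory of one-variable semialgebraic functions \cite{BasuPollackRoy2006}, we can do the following.
\begin{enumerate}
\item Parametrise the circle away from a finite set by rational charts, for instance $t=\tan(\theta/2)$, so that $\rho$ becomes a continuous semialgebraic function of $t$.
\item Partition the circle into finitely many open arcs on which $\rho$ is real analytic.
\item On each arc, note that $\rho$ satisfies a nonzero polynomial relation $F(t,\rho)=0$.
\end{enumerate}
On each such arc, $\rho$ is therefore given by a Puiseux expansion at either endpoint. It follows that $\rho$ is real analytic except at finitely many angles. At every angle, the left and right derivatives exist in the extended sense $[-\infty,\infty]$.

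\textbf{Step 2: ruling out infinite one-sided derivatives.} This is the main obstacle. The idea is to use star-shapedness with respect to a disc. Since $\Theta_4\subseteq\Om_{n,m}$, the region contains the diamond $H_4$, and hence a disc $D_r$ about $0$ with $r=1/\sqrt2$. Property \eqref{eq:fill-from-one}, or more precisely the fact from \eqref{eq:averaging-spectrum} that $t\lambda+(1-t)a$ lies in the region for $a\in[-1/(n-1),1]$, only fills segments towards real points. So I would try to prove a stronger statement: the region contains $\conv(D_r\cup\{\lambda\})$ for every nonreal $\lambda$ in it, at least near $\lambda$ in an angular sense. If one only has triangles with vertices $\lambda$, $-1/(n-1)$ and $1$, the opening angle of this cone at $\lambda$ is bounded below uniformly when $\lambda$ is away from the real axis. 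Near $\pm1$, one instead uses $R=1$ there together with the fact that the diamond's sides bound $\rho$ from below. A cone condition of this kind gives, in polar coordinates, $|\rho'_\pm(\theta)|\le\rho\cot(\text{half-angle})$. Combined with the upper bound $\rho\le1$, this yields a uniform bound on the Dini derivatives.

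\textbf{Step 3: conclusion.} A continuous function with uniformly bounded Dini derivatives is Lipschitz in $\theta$. Since $|e^{i\theta}-e^{i\phi}|$ is comparable to $|\theta-\phi|$ for $|\theta-\phi|\le\pi$, this gives the stated inequality in $|u-v|$.

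Where I expect trouble is at the real directions $\pm1$, where the triangle from \eqref{eq:fill-from-one} degenerates. There I would argue directly. The point $-1$ lies in the region only when $n$ is even or $m\ge1$, in which case $\rho(\pm\pi)=1$. The boundary near $\pm1$ lies between the unit circle and the lines through $\pm1$ and $\pm i/\sqrt2$ (or, more sharply, the side lines of $H_n$). Hence $1-\rho(\theta)\le C|\theta\mp\pi|$, or $\le C|\theta|$ near $1$. Together with $\rho\le1$ and the finiteness of one-sided derivatives from Step 1, this gives the Lipschitz bound in a neighbourhood of these two directions as well.
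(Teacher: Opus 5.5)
Your proposal is correct and is essentially the paper's proof. Away from $\pm1$, the triangle $\conv\{-1/(n-1),1,\lambda\}$ from \eqref{eq:fill-from-one} gives a uniform interior cone; the paper writes it as the ball $B(t\lambda,(1-t)|\operatorname{Im}\lambda|/n)$ and compares $R_{n,m}(u)$ with $R_{n,m}(v)$ directly rather than through Dini derivatives. At $\pm1$, the diamond bound $1-R_{n,m}=O(|\theta|)$ forces the one-sided limit of the derivative, which exists by your Puiseux expansion, to be finite, and compactness then patches the local bounds together. The only slip is that $H_4\subseteq\Omega_{n,m}$ and $R_{n,m}(-1)=1$ require $m\ge1$, so dispose of $m=0$ separately as the convex polygon $H_n$, as the paper does.
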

\begin{proof}
The polygon $H_n$ handles $m=0$. For $m\ge1$ and nonreal
$\lambda\in\Om_{n,m}$, \eqref{eq:fill-from-one} gives
\begin{equation}
 B\left(t\lambda,\frac{(1-t)|\operatorname{Im}\lambda|}{n}\right)
 \subseteq\Om_{n,m}\qquad(1/2\le t<1).
 \label{eq:interior-ball}
\end{equation}
Indeed, using $|\lambda|\le1$, the distances to the two oblique sides
are at least the stated radius; the distance to the real side is
$t|\operatorname{Im}\lambda|$.

On any closed arc $K$ avoiding $\pm1$, let
\[
 c=n^{-1}\min_{u\in K}|\operatorname{Im}(R_{n,m}(u)u)|>0.
\]
For $u,v\in K$ with $0<\delta=|u-v|<c/4$, take $t=1-2\delta/c$.
The disc in \eqref{eq:interior-ball}, centred at $tR_{n,m}(u)u$,
contains $tR_{n,m}(u)v$. Hence $R_{n,m}(v)\ge tR_{n,m}(u)$.
Interchanging $u,v$ gives $|R_{n,m}(u)-R_{n,m}(v)|\le2|u-v|/c$.

Near any direction $e^{i\phi}$, the rational parametrisation
$u(s)=e^{i\phi}(1-s^2+2is)/(1+s^2)$ makes
$f(s)=R_{n,m}(u(s))$ semialgebraic. It is real analytic on finitely
many intervals, and $f'$ has a one-sided limit, finite or infinite,
at every endpoint \cite{BasuPollackRoy2006}. At $\phi=0,\pi$,
the diamond $H_4$ and the unit disc give $f(s)=1+O(|s|)$.
An infinite limit of $f'$ would contradict this bound by the mean
value theorem. Thus the derivative is bounded near both real
directions as well. Hence every direction has a neighbourhood on
which $R_{n,m}$ satisfies a Lipschitz bound.

These neighbourhoods cover the unit circle: every point belongs to at
least one of them. Compactness lets us choose finitely many that still
cover the whole circle. Let $M$ be the largest of their Lipschitz
constants. For any $u,v$ on the circle, divide a shorter circular arc
joining them into sufficiently short pieces, each lying in one of
these neighbourhoods. Summing the local bounds over those pieces
bounds $|R_{n,m}(u)-R_{n,m}(v)|$ by $M$ times the length of the arc.
That length is at most $(\pi/2)|u-v|$, so
\[
 |R_{n,m}(u)-R_{n,m}(v)|\le\frac{\pi M}{2}|u-v|.
\]
This gives a global Lipschitz constant $L=\pi M/2$.
The same finite interval decomposition then gives
finite one-sided derivatives at every angle and real analyticity
apart from finitely many angles modulo $2\pi$.
\end{proof}

\begin{theorem}
\label{thm:sharp-angles}
Let $m\ge1$ and let $\zeta$ be a primitive $q$th root of unity,
$2\le q\le n$. Define
\[
 N_{n,m}(q)=
 \begin{cases}
 n,&q\mid n,\\
 q\min\{2m,\floor{n/q}\},&q\nmid n.
 \end{cases}
\]
To measure how $\lambda$ approaches $\zeta$, rotate $\zeta$ to $1$
and take the signed angle of $1-\overline\zeta\lambda$ with the positive
real axis. The possible limits of this angle, as
$\lambda\in\Om_{n,m}\setminus\{\zeta\}$ tends to $\zeta$, form
exactly the interval
\begin{equation}
 \left[-\frac\pi2+\frac\pi{N_{n,m}(q)},
        \frac\pi2-\frac\pi{N_{n,m}(q)}\right].
 \label{eq:angle-interval}
\end{equation}
\end{theorem}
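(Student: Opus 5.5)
The proof has two halves: an upper bound on the limiting angles, and a construction showing that every angle in \eqref{eq:angle-interval} occurs. Write $N=N_{n,m}(q)$.

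\emph{Upper bound.} Take $\lambda_j\in\Om_{n,m}\setminus\{\zeta\}$ tending to $\zeta$ with $\arg(1-\overline\zeta\lambda_j)\to\theta$. Choose $P_j\in\Q_{n,m}$ with these eigenvalues and, by compactness of $\Q_{n,m}$, pass to a subsequence with $P_j\to P_0\in\Q_{n,m}$. Let $r$ be the algebraic multiplicity of $\zeta$ in $P_0$. Theorem~\ref{thm:tangent} gives $|\theta|\le\pi/2-\pi/(qr)$, so it suffices to show $qr\le N$.

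When $q\mid n$, the bound $qr\le n$ holds because each of the $r$ closed classes contributing $\zeta$ has at least $q$ states. When $q\nmid n$, the same count gives $r\le\floor{n/q}$. The argument in the proof of Proposition~\ref{prop:nondivisor} applies: $C-\zeta I$ is invertible, so the eigenspace lies in the image of $(C-\zeta I)^{-1}E$. This gives $r\le\rank E\le2m$. Hence $qr\le N$, and every limit angle lies in \eqref{eq:angle-interval}.

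\emph{Attainment of the endpoints.} Put $r_0=N/q$. When $q\nmid n$, we have $r_0=\min\{2m,\floor{n/q}\}$. When $q\mid n$, we have $r_0=n/q$. I would use Theorem~\ref{thm:chains} with exponent $r_0$ and some $s$ in $[r_0,n]$, and relate the resulting polynomial \eqref{eq:general-chain-poly} to $\zeta$ exactly as in the expansion \eqref{eq:endpoint-expansion}. For this, choose $s$ so that $\zeta^{qr_0-s}$ makes the clockwise root of $(1+qa)^{r_0}=\zeta^{qr_0-s}$ equal to $u=e^{-2\pi i/(qr_0)}$. That is, $s\equiv qr_0-p'$ modulo $q$, where $\zeta=e^{2\pi ip/q}$ and $p'$ is suitable; any root choice yields some angle of the form $\pi/2-\pi(2k+1)/(qr_0)$ anyway. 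The branch approaching $\zeta$ along $u$ then has limit angle $\pi/2-\pi/N$, and complex conjugation gives $-\pi/2+\pi/N$.

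\emph{Membership in $\Q_{n,m}$.} This is the step I expect to be the main obstacle. Theorem~\ref{thm:chains} places the matrix in $\Q_{n,\ceil{r_0/2}}$, and $\ceil{r_0/2}\le m$ is automatic when $q\nmid n$ because $r_0\le 2m$. When $q\mid n$ and $r_0=n/q>2m$, the chain construction does not fit. There I would use a circulant instead: mix $R=C_n^{h}$, with $\gcd(h,n)=n/q$ and $\zeta$ an eigenvalue of $R$, with the neighbouring shift $S=C_n^{h\pm1}$. The circulant eigenvalue $\zeta$ of $(1-t)C_n^h+tC_n^{h'}$ then moves along the polygon edge of $H_n$ toward a neighbouring $n$th root. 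Its angle is exactly $\pm(\pi/2-\pi/n)$, and $H_n=\Om_{n,0}\subseteq\Om_{n,m}$.

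\emph{Interior angles.} For angles strictly inside the interval, I would use the filling property \eqref{eq:fill-from-one} together with convexity. The triangle $\conv\{-1/(n-1),1,\lambda\}$ does not reach near $\zeta$, so instead I would mix the matrices with $\zeta$ retained: take $tP^{+}+(1-t)P^{-}$, where $P^{\pm}$ realise the two extreme directions. Both can be built with the same $q$-cycle skeleton $R$, so $\zeta$ persists at $t$-perturbation level. To first order in the perturbation parameter, the displacement of the perturbed eigenvalue is a convex combination of the two extreme tangent vectors, by first-order perturbation of the semisimple eigenvalue $\zeta$ of the common limit. If the $P^{\pm}$ have different skeletons, I would alternatively connect them continuously inside the convex class $\Q_{n,m}$ and use continuity of the limiting angle along a one-parameter family. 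I would also use the angle $0$ direction given by mixing with $I$, and then the intermediate value theorem to fill the whole interval. Justifying that the eigenvalue near $\zeta$ depends smoothly enough for the limiting angle to vary continuously is the delicate point.
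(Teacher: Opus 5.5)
Your upper bound and your realisation of the two extreme angles match the paper's proof. The paper makes your choice of $s$ explicit as $s=(r-1)q+s_0$ with $ps_0\equiv\pm1\pmod q$, which also gives $r\le s<qr\le n$. When $q\mid n$ it uses the two sides of $H_n$ at $\zeta$.

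The gap is in the interior angles. You claim that the first-order displacement for $tP^{+}+(1-t)P^{-}$ is a convex combination of the two extreme tangent vectors. This fails once $\zeta$ has multiplicity $r\ge2$ in the common limit $P(0)$. First-order perturbation of a semisimple eigenvalue yields the eigenvalues of an $r\times r$ compressed matrix, and these do not depend linearly on the perturbation. In the chain family, the elimination of Theorem~\ref{thm:chains} applied to the mixture gives, to leading order,
\[
(1+qa)^r=\prod_j\bigl(t\zeta^{-\ell^+_{j+1}}+(1-t)\zeta^{-\ell^-_{j+1}}\bigr),
\]
which is a product of convex combinations. A continuous root leaving one extreme need not reach the other. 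For example, take $n=7$, $m=1$, $\zeta=e^{2\pi i/3}$, $r=2$, $(\ell^+_0,\ell^+_1)=(2,2)$ and $(\ell^-_0,\ell^-_1)=(3,2)$. The right side is $\zeta(1-t+t\zeta)$. The branch leaving the angle $\pi/3$ at $t=1$ arrives at $\pi/6$ at $t=0$, and symmetrically $-\pi/3$ leads to $-\pi/6$. So this family never produces the nonzero angles of absolute value below $\pi/6$, and the intermediate value argument does not close. Mixing with $I$ does not supply angle $0$ either. The point $(1-t)\zeta+t$ gives $1-\overline\zeta\lambda=t(1-\overline\zeta)$, whose angle is $\pi/2-\pi p/q$; the radial direction needs $J$, not $I$.

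The missing idea is the one you set aside: the filling property does reach $\zeta$ once the mixing weight is tied to the branch parameter. Let $P(\alpha)$ be an extreme family with $1-\overline\zeta\lambda(\alpha)=A\alpha+O(\alpha^2)$ and $\operatorname{Re}A>0$. For small $\alpha$, the matrix $(1-c\alpha)P(\alpha)+c\alpha J$ lies in the same class and has eigenvalue $\widetilde\lambda=(1-c\alpha)\lambda(\alpha)$. Hence $1-\overline\zeta\widetilde\lambda=(A+c)\alpha+O(\alpha^2)$. As $c$ runs over $[0,\infty)$, $\arg(A+c)$ takes every value from the extreme angle down to, but excluding, $0$. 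The matrix $(1-\alpha)P(0)+\alpha J$, with eigenvalue $(1-\alpha)\zeta$, supplies the angle $0$. Applying this to both extremes fills the closed interval without any multiple-eigenvalue perturbation theory. When $q\mid n$ even this is unnecessary: $H_n\subseteq\Om_{n,m}$ is a convex polygon with vertex $\zeta$, so every angle between its two sides at $\zeta$ occurs directly.
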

\begin{proof}
Choose matrices $P_j\in\Q_{n,m}$ with eigenvalues
$\lambda_j\to\zeta$, and pass to a subsequence on which the matrix
entries converge to those of $P_0$. Let $r$ be the multiplicity of
$\zeta$ as an eigenvalue of $P_0$. By Theorem~\ref{thm:tangent},
any limiting angle has absolute value at most $\pi/2-\pi/(qr)$.
Always $qr\le n$. If $q\nmid n$, the same argument as in the proof of
Proposition~\ref{prop:nondivisor} gives $r\le2m$.
Thus $qr\le N_{n,m}(q)$, which gives the bounds in
\eqref{eq:angle-interval}.

If $q\mid n$, the two sides of $H_n$ meeting $\zeta$ attain both
extreme angles. Their points are eigenvalues of convex combinations
of the circulant permutation matrices corresponding to the endpoints.
Now suppose $q\nmid n$ and set
\[
 r=\min\{2m,\floor{n/q}\}.
\]
Write $\zeta=e^{2\pi ip/q}$ and choose
$s_0\in\{1,\ldots,q-1\}$ with $ps_0\equiv1\pmod q$.
Put $s=(r-1)q+s_0$. Then $r\le s<qr\le n$, so the matrices in
Theorem~\ref{thm:chains} belong to $\Q_{n,m}$ because
$\ceil{r/2}\le m$. The expansion in
\eqref{eq:endpoint-expansion}, applied to their polynomial, gives
an analytic eigenvalue branch
\[
 \lambda(\alpha)=\zeta\left[1+\frac{u-1}{q}\alpha+O(\alpha^2)\right],
 \qquad u=e^{-2\pi i/(qr)}.
\]
Hence $1-\overline\zeta\lambda(\alpha)$ has limiting angle
$\arg(1-u)=\pi/2-\pi/(qr)$, the positive extreme.
Choosing $ps_0\equiv-1\pmod q$ and
$u=e^{2\pi i/(qr)}$ gives the negative extreme.

For either extreme, let $P(\alpha)$ be the corresponding matrix
family and write
\[
 1-\overline\zeta\lambda(\alpha)=A\alpha+O(\alpha^2),
 \qquad\operatorname{Re}A>0.
\]
For fixed $c\ge0$ and sufficiently small $\alpha>0$, the matrix
\[
 \widetilde P_c(\alpha)=(1-c\alpha)P(\alpha)+c\alpha J
\]
belongs to the same class and has eigenvalue
$\widetilde\lambda_c(\alpha)=(1-c\alpha)\lambda(\alpha)$.
Therefore
\[
 1-\overline\zeta\widetilde\lambda_c(\alpha)
 =(A+c)\alpha+O(\alpha^2).
\]
As $c$ increases, $\arg(A+c)$ takes every angle strictly between
the chosen extreme and $0$, as well as the extreme at $c=0$.
Finally, $(1-\alpha)P(0)+\alpha J$ has eigenvalue
$(1-\alpha)\zeta$, whose approach angle is $0$.
\end{proof}

\begin{corollary}
\label{cor:root-slopes}
Let $n\ge4$, $m\ge1$, and let $\zeta=e^{i\phi}$ be a primitive
$q$th root of unity, $2\le q\le n$. Set $N=N_{n,m}(q)$. Then
\begin{equation}
 R_{n,m}(e^{i(\phi+h)})
 =1-\tan(\pi/N)|h|+o(|h|)\qquad(h\to0).
 \label{eq:root-slopes}
\end{equation}
This shows that the radius has left derivative
$\tan(\pi/N)$ and right derivative $-\tan(\pi/N)$ at $\phi$.
At $\zeta=1$, the same statements hold with $N=n$ for every $m$.
\end{corollary}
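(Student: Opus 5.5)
The idea is to turn the angle statement of Theorem~\ref{thm:sharp-angles} into a statement about the radial function $R=R_{n,m}$ near $\phi$. Throughout we work in the rotated coordinate $w=1-\overline\zeta\lambda$, so that $\zeta$ corresponds to $w=0$. A point $\lambda=\rho e^{i(\phi+h)}$ then satisfies
\[
 w=1-\rho e^{ih}=(1-\rho)-i\rho h+O(h^2).
\]
Hence, as $h\to0$ with $\rho\to1$, the signed angle of $w$ is determined to first order by the ratio of $(1-\rho)$ to $h$. The boundary point in direction $\phi+h$ is $\lambda_h=R(e^{i(\phi+h)})e^{i(\phi+h)}$. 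By continuity (Theorem~\ref{thm:radial}), and since $R(\zeta)=1$ by Proposition~\ref{prop:padding} (roots of order at most $n$ lie in $\Om_{n,m}$ for $m\ge1$), we have $\lambda_h\to\zeta$.

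\emph{Upper bound.} Suppose that along some sequence $h_j\to0^+$ we had
\[
 1-R(e^{i(\phi+h_j)})\le(\tan(\pi/N)-\epsilon)\,h_j .
\]
Then the points $\lambda_{h_j}\in\Om_{n,m}\setminus\{\zeta\}$ satisfy $|\arg w|\ge\arctan\bigl(h_j/((\tan(\pi/N)-\epsilon)h_j)\bigr)+o(1)$. This follows from $\operatorname{Re}w\approx1-\rho$ and $|\operatorname{Im}w|\approx h$, once we check that $1-\rho=O(h)$; that holds because $\Om_{n,m}$ lies in the unit disc and contains $H_4$. Since $\tan(\pi/2-\pi/N)=\cot(\pi/N)$, the limiting angle would exceed $\pi/2-\pi/N$ in absolute value. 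This contradicts Theorem~\ref{thm:sharp-angles}, so
\[
 R\le1-\tan(\pi/N)|h|+o(|h|).
\]
The negative side $h<0$ is handled the same way, using the symmetric interval.

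\emph{Lower bound.} We need points of $\Om_{n,m}$ approaching $\zeta$ exactly along the extreme angles $\pm(\pi/2-\pi/N)$, together with filling. For $q\mid n$, take the sides of $H_n\subseteq\Om_{n,m}$ at $\zeta$. These are line segments, and their radial profile is exactly $1-\tan(\pi/n)|h|+O(h^2)$. For $q\nmid n$, use the analytic branch $\lambda(\alpha)$ from the proof of Theorem~\ref{thm:sharp-angles}: it is a curve with tangent direction at the extreme angle and satisfies $\lambda(\alpha)=\zeta(1-A\alpha+O(\alpha^2))$. Its argument is $\phi-\operatorname{Im}A\,\alpha+O(\alpha^2)$, which is monotone and analytic in $\alpha$, so every small $h$ of the appropriate sign is attained. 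Along the curve the modulus equals $1-\operatorname{Re}A\,\alpha+O(\alpha^2)$, hence
\[
 R\ge1-\frac{\operatorname{Re}A}{|\operatorname{Im}A|}|h|+O(h^2)=1-\tan(\pi/N)|h|+O(h^2),
\]
because $\arg A=\pm(\pi/2-\pi/N)$. The radius in that direction is at least the modulus of this point, since radial segments toward $0$ stay in the region by \eqref{eq:averaging-spectrum}. The conjugate choice in Theorem~\ref{thm:sharp-angles} gives the other side.

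\emph{Derivatives and the case $\zeta=1$.} Combining the two bounds gives \eqref{eq:root-slopes}, and the one-sided derivatives $\mp\tan(\pi/N)$ follow directly. At $\zeta=1$, the region lies inside $\Theta_n$, whose boundary near $1$ consists of the Type~0 segments to $e^{\pm2\pi i/n}$. Those segments are sides of $H_n=\Om_{n,0}\subseteq\Om_{n,m}$, so $\Om_{n,m}$ coincides with $H_n$ near $1$ and the slope is $\tan(\pi/n)$ for every $m$. For this step I would use that the only segments from $1$ in $\Theta_n$ are these two, i.e.\ that $\Theta_n$ lies between them near $1$ by Karpelevi\v{c}.

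\emph{Main obstacle.} The main delicacy is the upper bound. One must confirm that a sequence of boundary points violating the slope genuinely yields a limiting angle outside the interval. This requires controlling $1-\rho$ relative to $h$, which I handle as described via $1-\rho=O(h)$ from the unit disc and $H_4$, and then passing to a subsequence on which the angle converges.
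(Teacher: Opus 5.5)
Your proposal is correct and follows essentially the same route as the paper. You get the bound $\liminf_{h\to0}(1-R_h)/|h|\ge\tan(\pi/N)$ from the angle limits of Theorem~\ref{thm:sharp-angles}, the reverse bound from the extreme eigenvalue branches (or the sides of $H_n$ when $q\mid n$), and the case $\zeta=1$ from the Type~0 segments.

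One small correction concerns your upper-bound step. The control $1-\rho=O(h)$ is already supplied by your contradiction hypothesis. The justification you give via $H_4$ would not work in general: near a root of unity other than $\pm1,\pm i$ (for example a primitive cube root), $H_4$ gives no such bound.
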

\begin{proof}
For $q\ge2$, the hypotheses give $N>2$. Put
\[
 R_h=R_{n,m}(e^{i(\phi+h)}),\qquad
 \lambda_h=R_he^{i(\phi+h)}.
\]
Then
\[
 1-\overline\zeta\lambda_h
 =1-R_h\cos h-iR_h\sin h.
\]
Since $R_h\to1$, its imaginary part is $-h+o(h)$ and its real
part is $1-R_h+O(h^2)$. As $h\to0$, every limit of the signed angle that
$1-\overline\zeta\lambda_h$ makes with the positive real axis has
absolute value at most $\pi/2-\pi/N$, by
Theorem~\ref{thm:sharp-angles}.
Comparing the real part with the absolute value of the imaginary part
therefore gives
\[
 \liminf_{h\to0}\frac{1-R_h}{|h|}\ge\tan(\pi/N).
\]

To obtain the reverse bound, use the branches attaining the two
extreme angles; if $q\mid n$, use the corresponding sides of $H_n$.
On each branch, choose the angle $\theta(\alpha)$ continuously
with $\theta(0)=\phi$. Its derivative with respect to $\alpha$
at $0$ is nonzero, so the branch reaches every sufficiently close
angle on the corresponding side of $\phi$. Writing
$h=\theta(\alpha)-\phi$, the endpoint expansion gives
\[
 1-|\lambda(\alpha)|=\tan(\pi/N)|h|+o(|h|).
\]
The maximum radius $R_h$ is at least $|\lambda(\alpha)|$.
Consequently,
\[
 \limsup_{h\to0}\frac{1-R_h}{|h|}\le\tan(\pi/N),
\]
which proves the formula for $q\ge2$.

At $\zeta=1$, use the Type~0 segments from $1$ to
$e^{\pm2\pi i/n}$. They belong to every class and form the boundary
of $\Theta_n$ near $1$, so the same calculation applies with $N=n$.
\end{proof}

Matching the extreme angles does not imply matching nearby boundary
points: for $n=9,m=1$, the angles at a primitive cube root agree with
those of the unrestricted region, but Theorem~\ref{thm:local-polygon}
gives local equality with $H_9$; see Figure~\ref{fig:local-polygon-comparison}.

\section{Remaining questions}
\label{sec:questions}

For $m<j_n^*$, which lists of $n$ eigenvalues occur together in
$\Q_{n,m}$? Appendix~\ref{app:extensions} describes local changes near
strictly positive matrices, but the global attainable set remains to be
characterised.

Let $A$ be a Karpelevi\v{c} arc on $\partial\Theta_n$ that also lies on
$\partial\Theta_N$ for some $N<n$. Determine $A\cap\Omega_{n,m}$.
To find a matrix $P\in\Q_{n,m}$ with a given eigenvalue $\lambda\in A$,
the transition probabilities to be chosen are the entries $c_k$ of
the common vector defining the interior rows and the entries $P_{bj}$
of the free boundary rows $b\in B_{n,m}$.
Appendix~\ref{app:feasibility} bounds how many entries of these vectors
need to be positive. Once their positive positions are chosen, all
other entries are zero, and only the probabilities at those positions
remain to be determined. Explicit inequalities describing the
intersection and efficient algorithms for deciding membership remain open.

For each even $n\ge4$, construct an explicit stochastic matrix $P$
with strictly positive rational entries whose complete eigenvalue list
no matrix in $\Q_{n,n/2-1}$ can reproduce.
In this family, the second of the two consecutive interior rows is
obtained by shifting the first one position to the right; all other
rows are free probability vectors.
Corollary~\ref{cor:positive-joint-witnesses} proves existence but does
not give concrete entries in each order. Choosing rational entries
gives a finite description by fractions and allows exact computation
of the characteristic polynomial. For the chosen $P$, give an explicit
positive lower bound on the distance from its ordered
characteristic-polynomial coefficient list to
$\operatorname{coeff}(\Q_{n,n/2-1})$, together with a proof of that bound.
Such a positive gap verifies that no matrix $Q\in\Q_{n,n/2-1}$
has the same characteristic polynomial as $P$, and hence none has
the same complete eigenvalue list.

\appendix

\section{Feasibility and support reductions}
\label{app:feasibility}

The eigenvector equations give membership criteria and bounds on the
positive entries needed in the shared interior row and the free boundary
rows. At boundary points of $\Omega_{n,m}$, these bounds can be improved.

\begin{proposition}
\label{thm:membership}
A number $\lambda\in\mathbb C$ belongs to $\Omega_{n,m}$ if and only if
there are a probability vector $c=(c_0,\ldots,c_{n-1})\in\mathbb R^n$
and a nonzero vector $v=(v_0,\ldots,v_{n-1})\in\mathbb C^n$ such that
\begin{align}
 \lambda v_i&=\sum_{k=0}^{n-1}c_kv_{i+k}
 &&(i\in I_{n,m}),\label{eq:membership-interior}\\
 \lambda v_b&\in\conv\{v_0,\ldots,v_{n-1}\}
 &&(b\in B_{n,m}).\label{eq:membership-boundary}
\end{align}
Here $I_{n,m}=\{m,\ldots,n-m-1\}$ indexes the interior rows, and
$B_{n,m}$ indexes the first and last $m$ rows. Subscripts in
\eqref{eq:membership-interior} are read modulo $n$.
Each boundary row can be chosen with at most three positive entries.
The vector $c$ can simultaneously be chosen with at most
$\min\{n,2|I_{n,m}|+1\}$ positive entries.
If $\lambda$ is real, $v$ can be chosen real; each boundary row then
needs at most two positive entries, and $c$ needs at most
$\min\{n,|I_{n,m}|+1\}$.
\end{proposition}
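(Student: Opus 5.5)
The equivalence comes straight from Proposition~\ref{prop:completion}; the support bounds then come from Carath\'eodory's theorem applied to suitable convex sets in real coordinates.

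\emph{Equivalence.} If $P\in\Q_{n,m}$ has $Pv=\lambda v$ with $v\ne0$, take $c$ to be the common shifted vector of the interior rows. For $i\in I_{n,m}$ we have $P_{ij}=c_{j-i\bmod n}$, so $\lambda v_i=\sum_k c_kv_{i+k}$, which is \eqref{eq:membership-interior}. Each boundary row is a probability vector, so $\lambda v_b$ lies in $K(v)$, as in \eqref{eq:polygon-invariant}; this is \eqref{eq:membership-boundary}.

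Conversely, given $c$ and $v$, let the interior rows be the cyclic shifts of $c$. For each $b\in B_{n,m}$, write $\lambda v_b$ as a convex combination of the coordinates $v_j$ and use the coefficients as row $b$. The resulting matrix lies in $\Q_{n,m}$ and has $Pv=\lambda v$.

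\emph{Boundary rows.} Identify $\mathbb C$ with $\mathbb R^2$. Carath\'eodory's theorem writes any point of $\conv\{v_0,\ldots,v_{n-1}\}$ as a convex combination of at most three of the $v_j$, so each boundary row needs at most three positive entries. If $\lambda$ is real, the real and imaginary parts of $v$ each satisfy \eqref{eq:membership-interior}--\eqref{eq:membership-boundary}: the conditions are real-linear in $v$ for a fixed real matrix, and a matrix realising them is real. One of these parts is nonzero, so $v$ can be taken real. Carath\'eodory's theorem in $\mathbb R^1$ then gives at most two positive entries per boundary row.

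\emph{The vector $c$.} Fix $v$ and the boundary rows. The constraints on $c$ are that $c$ is a probability vector, i.e.\ $c$ lies in the simplex, together with the linear system
\[
 \sum_k c_k\bigl(v_{i+k}\bigr)_{i\in I_{n,m}}=\lambda\,(v_i)_{i\in I_{n,m}}.
\]
Read in real coordinates, this says that the target point $\lambda(v_i)_{i\in I_{n,m}}\in\mathbb R^{2|I_{n,m}|}$ lies in the convex hull of the $n$ points $w_k=(v_{i+k})_{i\in I_{n,m}}$. Carath\'eodory's theorem in dimension $2|I_{n,m}|$ gives a representation using at most $2|I_{n,m}|+1$ of the $w_k$. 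Keeping only those coefficients gives a new probability vector $c'$ with support of that size that still satisfies \eqref{eq:membership-interior}. The bound $n$ is trivial, so $c'$ has at most $\min\{n,2|I_{n,m}|+1\}$ positive entries. In the real case the points $w_k$ lie in $\mathbb R^{|I_{n,m}|}$, which gives $\min\{n,|I_{n,m}|+1\}$.

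The reduction of $c$ leaves $v$ and $\lambda$ unchanged. The boundary conditions \eqref{eq:membership-boundary} involve only $v$, so the reductions of $c$ and of the boundary rows can be made simultaneously. The only point needing care is the real case when $v$ is complex: one must check that the chosen part of $v$ is nonzero and still satisfies \eqref{eq:membership-interior}, which holds because $c$ and $\lambda$ are real.
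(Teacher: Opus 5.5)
Your proposal is correct and follows the paper's proof essentially step for step. The only cosmetic differences are that you apply Carath\'eodory's theorem in $\mathbb R^{2|I_{n,m}|}$ (or $\mathbb R^{|I_{n,m}|}$) to bound the support of $c$, where the paper takes a vertex of the feasible polytope, which is the same counting argument. You also spell out, via real and imaginary parts of the eigenvector of a realising real matrix, why $v$ can be taken real when $\lambda$ is real, a step the paper simply asserts.
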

\begin{proof}
Use the cyclic shifts of $c$ as interior rows and the convex combinations
as boundary rows. These choices give $P\in\Q_{n,m}$ with $Pv=\lambda v$;
conversely, an eigenvector of such a matrix satisfies the displayed
conditions.

For fixed $v$, Carath\'eodory's theorem in $\mathbb R^2$ gives each
boundary row with at most three positive entries. The possible $c$ form
a nonempty compact polytope with at most $2|I_{n,m}|+1$ real equality
constraints, including its sum. A vertex of this polytope has at most
that many positive entries. Choosing such a vertex leaves the boundary
conditions unchanged, so the bounds hold simultaneously.
For real $\lambda$, choose $v$ real and apply the same argument on
$\mathbb R$: two entries suffice in each boundary row, and the system
for $c$ has at most $|I_{n,m}|+1$ equations.
\end{proof}

\begin{proposition}
\label{prop:boundary-two-support}
Every nonreal $\lambda\in\partial\Omega_{n,m}$ is an eigenvalue of
a matrix $P\in\Q_{n,m}$ in which each of the $2m$ boundary rows has
at most two positive entries. The probability vector $c$ defining
the cyclic shifts in the interior rows can simultaneously be chosen
with at most $\min\{n,2|I_{n,m}|+1\}$ positive entries.
Given any stochastic matrix $A\in\Q_{n,m}$ with eigenvalue $\lambda$,
one can replace each boundary row by a probability vector with at most
two positive entries while preserving all interior rows and the
eigenvalue $\lambda$. The eigenvalue $\lambda$ may be repeated,
and the given matrix may be reducible.
\end{proposition}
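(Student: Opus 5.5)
Our plan is to fix the eigenvector and work one boundary row at a time, using the membership criterion of Proposition~\ref{thm:membership}. Take $A\in\Q_{n,m}$ with $Av=\lambda v$, $v\ne0$, and put $K=\conv\{v_0,\ldots,v_{n-1}\}$. The interior rows remain untouched. Condition \eqref{eq:membership-interior} therefore continues to hold. For any boundary row $b$, the new row only has to express $\lambda v_b$ as a convex combination of the coordinates $v_j$ with at most two positive weights. This is possible exactly when $\lambda v_b$ lies on a segment joining two coordinates. In particular it is possible when $\lambda v_b\in\partial K$, since every boundary point of a polygon lies on an edge between two vertices. It is also possible when $\lambda v_b$ equals some $v_j$.

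The first step is to ensure $\lambda v_b$ lies on the boundary, or else to change $v$. By \eqref{eq:polygon-invariant}, $\lambda K\subseteq K$. If every $\lambda v_b$ lies on $\partial K$, we are done. Otherwise some $\lambda v_b$ lies in the interior of $K$. In that case we would move the coordinate $v_b$ outward: replace $v_b$ by $v_b+\epsilon w$ for a suitable direction $w$, keeping all other coordinates fixed. Only the row equations that use $v_b$ are affected. Interior rows with $c_{b-i}>0$ are such rows, so this naive move breaks \eqref{eq:membership-interior}. Hence a cleaner route is needed, and we would use the boundary hypothesis only for the first assertion.

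For the first assertion we would argue by extremality. Let $\lambda\in\partial\Om_{n,m}$ be nonreal. Among all pairs $(c,v)$ and boundary rows realising $\lambda$, suppose some $\lambda v_b$ lies in the interior of $K$. Then we can scale $\lambda$ to $t\lambda$ with $t>1$ slightly in row $b$ and still stay inside $K$. That alone does not perturb the eigenvalue, so we instead use a perturbation in the spirit of Lemma~\ref{lem:first-order}. Slack in row $b$ lets us mix row $b$ toward a convex combination that pushes $\lambda$ outward in the direction normal to $\partial\Om_{n,m}$. This would contradict boundarity.

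Concretely, we would choose the matrix $P$ realising $\lambda$ with eigenvalue $\lambda$ simple if possible; otherwise we use a left eigenvector $y$. First-order perturbation then gives $\delta\lambda=y^*(\delta P)v/(y^*v)$. Varying boundary row $b$ freely within probability vectors gives $\delta\lambda\propto \bar y_b(\sum_j\delta P_{bj}v_j)$. When $\lambda v_b$ is interior, $\sum_j\delta P_{bj}v_j$ ranges over a full neighbourhood of $0$. So $\delta\lambda$ covers a neighbourhood unless $y_b=0$, and the boundary hypothesis forces $y_b=0$.

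If $y_b=0$, row $b$ does not affect $\lambda$ to first order. In fact, $y^*(P'-P)=0$ on row $b$ means $y^*$ remains a left eigenvector for any replacement $P'$ of row $b$. Therefore $\lambda$ stays an eigenvalue when row $b$ is replaced arbitrarily, for instance by a row with a single positive entry. This handles the case via left eigenvectors rather than right ones.

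This dichotomy suggests the general statement for any $A$ should be proved with two cases for each $b$. If $\lambda v_b\in\partial K$, use the edge representation with two entries. Otherwise, look for a left eigenvector with $y_b=0$, or else modify $v$. The main obstacle is the repeated or reducible case, where the relevant eigenvectors are not unique and simplicity fails.

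There we would use compactness and the characteristic polynomial instead. The set of admissible rows for $b$ with $\det(\lambda I-P)=0$ is the zero set of a function affine in row $b$. The determinant is linear in each row, so $\det(\lambda I-P)=\sum_j P_{bj}D_j$ with complex constants $D_j$. Vanishing of this sum is two real linear equations on the simplex. By Carath\'eodory, a vertex of this polytope has at most three positive entries, matching Proposition~\ref{thm:membership}.

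To reach two entries we would exploit the structure of the $D_j$. Each $D_j$ is the determinant with row $b$ replaced by $e_j$, which is the cofactor-type quantity $\det(\lambda I-P)$ with $v$-dependence. The aim is to show that the points $D_j$ are collinear with $0$ up to a common phase, that is, $D_j=\mu\,\overline{y_b}(\ldots)$. This would reduce the problem to one real equation, and a vertex then has at most two positive entries. Proving that collinearity, or otherwise reducing to one real constraint, is the step we expect to be hardest.
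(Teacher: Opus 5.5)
\textbf{There is a genuine gap.} In your last step you reach the right object, the row-linearity $\det(zI-P)=\sum_j P_{bj}D_j(z)$ with $D_j(z)=\det(zI-P^{(j)})$, where $P^{(j)}$ is $P$ with row $b$ replaced by $e_j^{\mathsf T}$. But the proof stops exactly at the step that does the work, and the route you suggest for it would not succeed.

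Collinearity of the $D_j(\lambda)$ is not a structural identity. Taken over all $j$ it is false in general. Suppose the adjugate of $\lambda I-P$ has rank one. Then $D_j(\lambda)$ is a common factor times $\lambda v_b-v_j$. Collinearity of all of them would put every coordinate of $v$ on one line, which is impossible for nonreal $\lambda$ since $\lambda K\subseteq K$. Even restricted to the support, collinearity says that the $v_j$ with $P_{bj}>0$ lie on one line through $\lambda v_b$. That is a genuine geometric restriction, so it can only come from the hypothesis $\lambda\in\partial\Om_{n,m}$. Your earlier attempt to use that hypothesis was a first-order perturbation through a left eigenvector. It needs $\lambda$ to be simple, which the statement explicitly does not grant, and simplicity can also be destroyed as you modify successive boundary rows.

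The missing idea is a zero-order openness argument on the scalar polynomials $D_j$, restricted to the support $S=\{j:P_{bj}>0\}$. Since $0=\sum_{j\in S}P_{bj}D_j(\lambda)$ with all weights strictly positive, non-collinear points $D_j(\lambda)$, $j\in S$, would make $0$ an interior point of their convex hull. Each $D_j$ is continuous in $z$, so $0$ would remain in $\conv\{D_j(z):j\in S\}$ for every $z$ near $\lambda$. The corresponding convex weights give a new row $b$ with $\det(zI-P')=0$, which places a whole neighbourhood of $\lambda$ in $\Om_{n,m}$, a contradiction. Hence the support points are collinear, and their hull is a segment containing $0$. Then $0$ is a convex combination of at most two of them, which gives the new row. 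Your ``one real equation'' endgame is correct once you restrict to $S$.

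This argument uses no eigenvectors, so repeated eigenvalues and reducibility are harmless. It can be applied row by row, because $\lambda$ remains an eigenvalue and the boundary hypothesis concerns only $\lambda$ and $\Om_{n,m}$.

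You also do not address the claim about $c$. Once the boundary rows are fixed, take an eigenvector $v$ of the new matrix and choose $c$ at a vertex of the polytope in Proposition~\ref{thm:membership}. The boundary-row equations involve only $v$, so they survive the change of $c$.
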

\begin{proof}
When $m=0$, there are no boundary rows and the bound on $c$ is $n$,
so no change is needed. Assume $m\ge1$.
Let $\lambda\in\partial\Omega_{n,m}$ be nonreal, and let
$P\in\Q_{n,m}$ be a stochastic matrix with eigenvalue $\lambda$.
Choose a boundary row $b\in B_{n,m}$ and write its entries as
$p_j=P_{bj}$, so $p_j\ge0$ and $\sum_jp_j=1$.

For each column $j$, let $P^{(j)}$ be the matrix obtained from $P$
by replacing row $b$ by the row with entry $1$ in column $j$ and
$0$ in every other column. Put
\[
 f_j(z)=\det(zI-P^{(j)}).
\]
The $b$th row of $zI-P$ is the convex combination, with weights
$p_j$, of the $b$th rows of $zI-P^{(j)}$; all other rows agree.
Linearity of the determinant in that row therefore gives
\[
 \det(zI-P)=\sum_jp_jf_j(z),\qquad
 0=\sum_jp_jf_j(\lambda).
\]

Consider the complex numbers $f_j(\lambda)$ for the columns where
$p_j>0$. The second equality expresses $0$ as a convex combination
of these numbers, with every participating weight positive.
If these numbers did not lie on one line in the complex plane,
$0$ would lie in the interior of their convex hull.
Each polynomial $f_j(z)$ varies continuously with $z$, so $0$
would remain in that hull for every $z$ sufficiently close to
$\lambda$. For each such $z$, we could choose nonnegative row
entries $q_j$, summing to $1$, with
\[
 \sum_jq_jf_j(z)=0.
\]
Replacing row $b$ by these entries would give a stochastic matrix
in $\Q_{n,m}$ with eigenvalue $z$. Thus a whole neighbourhood of
$\lambda$ would lie in $\Omega_{n,m}$, contrary to
$\lambda\in\partial\Omega_{n,m}$.

The complex numbers $f_j(\lambda)$ with $p_j>0$ therefore lie on
one line. Their convex hull is a line segment, or a single point,
containing $0$. Express $0$ as a convex combination of at most
two endpoints of this hull, and put the corresponding weights
in the corresponding columns of row $b$, with all other entries
zero. The determinant at $\lambda$ is still zero, so $\lambda$
remains an eigenvalue. Row $b$ now has at most two positive entries.

Apply the same argument to each remaining boundary row of the
current matrix. All interior rows stay fixed throughout these changes.
Finally, choose an eigenvector $v$ of the resulting matrix for
$\lambda$ and apply the choice of $c$ in
Proposition~\ref{thm:membership}. This gives the stated bound on
the number of positive entries of $c$. The boundary rows already
constructed still satisfy their eigenvector equations with this
same $v$, so they can be retained when the interior rows are
changed to use the new $c$.
\end{proof}

Fix a number $\lambda\notin H_n$ whose membership in $\Omega_{n,m}$
we want to determine. Following the criterion of
Proposition~\ref{thm:membership}, we must find a probability vector
$c$ and a nonzero eigenvector $v$. The interior equations say that
$(\lambda I-C(c))v$ is zero at every coordinate indexed by $I_{n,m}$.
We can therefore write
\[
 (\lambda I-C(c))v=U_Bh,
\]
where $h\in\mathbb C^{|B_{n,m}|}$ contains the coordinates indexed
by the boundary states. The map $U_B$ places those coordinates in
an $n$-component vector and puts zero at the interior-state coordinates.

Every eigenvalue of $C(c)$ lies in $H_n$, so $\lambda I-C(c)$ is
invertible. The criterion becomes the existence of $c$ and $h$
satisfying
\begin{equation}
 \begin{gathered}
 v=(\lambda I-C(c))^{-1}U_Bh,\qquad \|h\|_2=1,\\
 \lambda v_b\in\conv\{v_0,\ldots,v_{n-1}\}\quad(b\in B_{n,m}).
 \end{gathered}
 \label{eq:boundary-reduction}
\end{equation}
The residual $U_Bh$ is nonzero when $v\ne0$, so rescaling $v$
allows the normalisation $\|h\|_2=1$.
Thus the $2m$ complex coordinates to be found are those of $h$,
in place of the $n$ coordinates of $v$. The real probabilities
$c_k$ still have to be chosen. Once $c$ and $h$ are chosen,
the displayed formula determines $v$, and the convex-hull
conditions determine whether suitable boundary-row probabilities
exist.

For arcs inherited from smaller orders, the next proposition also bounds
the number of positive entries in the shared probability vector $c$.

\begin{proposition}
\label{prop:shared-support}
Let $\lambda\in\Theta_n$ be nonreal and suppose
$t\lambda\notin\Theta_n$ for every $t>1$. Let $N$ be the smallest order of a stochastic
matrix with eigenvalue $\lambda$. If $P\in\Q_{n,m}$ has eigenvalue
$\lambda$, choose a probability vector defining its interior rows
and let $p$ be its number of positive entries. Then
\[
 (p-2)(N-2m)\le p(n-N).
\]
In particular, let $\lambda$ lie on a Karpelevi\v{c} boundary arc,
distinct from its two endpoints. Write the endpoint denominators as
$q<s$, where $q\ge2$, and put $d=\lfloor n/q\rfloor$.
If $1\le m<\lceil d/2\rceil$ and $P\in\Q_{n,m}$ has eigenvalue
$\lambda$, its shared vector has at most two positive entries.
There is also a matrix in $\Q_{n,m}$ with eigenvalue $\lambda$
and at most two positive entries in every row.
\end{proposition}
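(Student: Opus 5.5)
The plan is to rerun the contact-polygon argument of Lemma~\ref{lem:positive-contact} at order $n\ge N$ and count how many interior rows can put positive weight on two or more polygon vertices. Let $v$ be an eigenvector of $P$ for $\lambda$ and $K=\conv\{v_0,\ldots,v_{n-1}\}$, so $\lambda K\subseteq K$. Because $\lambda$ is the farthest point of $\Theta_n$ on its ray, I expect $K$ to have exactly $N$ vertices. If it had fewer, expressing $\lambda$ times each vertex through the vertices would give a stochastic matrix of smaller order with eigenvalue $\lambda$. The hull cannot have more, since the Dmitriev--Dynkin contact structure at order $N$ pins the polygon to an $N$-gon. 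Otherwise a slight enlargement of $\lambda$ would remain feasible, contradicting the ray maximality; this is the same mechanism used in the proof of Theorem~\ref{thm:boundary-polynomial}.

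Next I will show that every coordinate $v_i$ that is not a vertex, as well as every vertex image, is forced into a rigid position. Applying the contact theorem to the $N$-vertex polygon shows that each $\lambda v_{\text{vertex}}$ lies on a single edge, with the representation uniquely determined. Hence a row indexed by a vertex state has at most two positive entries, and these go to states whose coordinates are the endpoints of that edge.

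For the counting step, fix an interior row with shared vector $c$ having $p$ positive entries. Its $p$ destinations give $p$ coordinates whose convex combination equals $\lambda v_i$. If $i$ is a vertex state, at most two of these destinations are vertex states; the other $p-2$ or more must be non-vertex states. Now count pairs consisting of an interior vertex state and a non-vertex destination. The interior vertex states number at least $N-2m$, and each contributes at least $p-2$ such pairs. For a fixed non-vertex state $j$, the states $i$ with $j=i+k$ for a positive $c_k$ number at most $p$. There are $n-N$ non-vertex states, so the count gives $(p-2)(N-2m)\le p(n-N)$.

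The main obstacle is this counting step. It must be checked that destinations at vertex states really number at most two, even when several states share a vertex coordinate. If coincident coordinates occur, I will count states by their coordinates and treat duplicates of a vertex as non-vertex states, which keeps the inequality.

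For the particular arc case, I will use Theorem~\ref{thm:boundary-polynomial} and the minimal-order discussion to get $N=\max(qd,s)$. When $m<\lceil d/2\rceil$ the left side should dominate unless $p\le2$. This is clear when $n=N$. When $n>N$, I will combine the inequality with Proposition~\ref{prop:nondivisor}-style rank restrictions and with $d=\lfloor n/q\rfloor$. The final claim follows by applying Proposition~\ref{prop:boundary-two-support}, which reduces every boundary row to two positive entries while keeping the interior rows, and hence $c$ with $p\le2$, fixed.
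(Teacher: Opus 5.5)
Your incidence count is fine. It is the dual of the paper's count, which tallies arrivals in the vertex representatives and gets $p(M-e)\le 2M$ with $M=|S\cap I_{n,m}|$ and $e=n-h$. The gap is in the structural fact the count rests on. You need $\lambda v_i$ to lie on an edge of $K$ for every interior vertex state $i$, and you obtain this by asserting that $K$ has exactly $N$ vertices and then invoking Lemma~\ref{lem:positive-contact}. Your reduction only gives $h\ge N$, and the ``slight enlargement'' sentence does not prove $h\le N$. For the hull of all coordinates, both claims fail.

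\emph{Counterexample.} Take the arc from $e^{2\pi i/4}$ to $e^{2\pi i/3}$, which lies on both $\partial\Theta_4$ and $\partial\Theta_5$, so $N=4<n=5$. The chain $0\to1\to2\to3$, with state $3$ moving to $1$ and $0$ with probabilities $\beta$ and $\alpha$, has eigenvector $(1,\lambda,\lambda^2,\lambda^3)$. Multiplication by $\lambda$ maps the edge $[v_2,v_3]$ to the chord from $v_3$ to $\beta v_1+\alpha v_0\in(v_0,v_1)$, and this chord crosses the interior. Add a fifth state whose coordinate $w$ lies just outside the midpoint of $[v_2,v_3]$. Then $\lambda w$ is interior to the quadrilateral, and its row may put positive weight on all four old vertices. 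The hull is now a pentagon, and that vertex row uses four vertex states. The contact lemma concerns polygons with exactly the minimal number of vertices, so it says nothing here.

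\emph{Missing step.} You need to pass to a closed class and prove the boundary property directly.
\begin{itemize}
\item Take a communicating class $S$ whose submatrix $A$ has eigenvalue $\lambda$. If its Perron root satisfied $\rho<1$, then $\rho^{-1}D^{-1}AD$, with $D$ the diagonal matrix of a positive Perron vector, would be stochastic with eigenvalue $\lambda/\rho$ on the same ray. So $\rho=1$ and $S$ is closed.
\item Use the hull $K$ of the coordinates on $S$ only; $h\ge N$ still holds.
\item Suppose some $\lambda v_i$ with $i\in S$ were interior to $K$. Make row $i$ strictly positive while keeping $Av=\lambda v$, and let $\pi>0$ be stationary, so $\pi^{\mathsf T}v=0$. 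Then $A'-\varepsilon e_i\pi^{\mathsf T}$ is irreducible and substochastic with a deficient row, and it still has eigenvector $v$. Perron rescaling again pushes $\lambda$ outward along its ray, a contradiction.
\end{itemize}
Hence every row in $S$ uses at most two vertex representatives, and your count gives the inequality with at most $n-h\le n-N$ non-representatives.

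\emph{Arc case.} No rank argument is needed. The minimal order $N=\max(qd,s)$, from Section~\ref{sec:chains}, gives $n-N\le q-1$. With $2m\le d-1$ and $d\ge3$, one gets
\[
N-2m-3(n-N)\ge(d-3)(q-1)+1>0,
\]
whereas $p\ge3$ would force $N-2m\le\frac{p}{p-2}(n-N)\le3(n-N)$. Finally, Proposition~\ref{prop:boundary-two-support} applies because $\Om_{n,m}\subseteq\Theta_n$ places $\lambda$ on $\partial\Om_{n,m}$.
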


\begin{proof}
Put $b=2m$ and choose a communicating class $S$ of $P$ whose
submatrix $A$ has eigenvalue $\lambda$. It is irreducible and
substochastic, with Perron root $0<\rho\le1$.
If $\rho<1$, a positive Perron vector $u$ and $D=\diag(u)$
make $\rho^{-1}D^{-1}AD$ stochastic with eigenvalue $\lambda/\rho$,
contradicting the assumed maximality on its ray. Hence $\rho=1$.
A positive left Perron vector then forces $A\one=\one$, so $S$
is closed.

Choose $v\ne0$ with $Av=\lambda v$ and put
$K=\conv\{v_i:i\in S\}$. If $K$ has $h$ vertices, expressing
$\lambda$ times each vertex as a convex combination of vertices
gives a stochastic matrix of order $h$ with eigenvalue $\lambda$,
so $h\ge N$.
Every $\lambda v_i$ lies on $\partial K$. Otherwise row $i$ could
be replaced by strictly positive probabilities while preserving
$Av=\lambda v$. For the resulting irreducible stochastic matrix
$A'$, let $\pi>0$ be stationary. Since $\lambda\ne1$,
$\pi^{\mathsf T}v=0$, and
\[
 A_\varepsilon=A'-\varepsilon e_i\pi^{\mathsf T}
 \quad\hbox{satisfies}\quad A_\varepsilon v=\lambda v.
\]
For small $\varepsilon>0$, this matrix is nonnegative and irreducible
with a deficient row sum, hence has Perron root below $1$.
Perron normalisation would again give an eigenvalue farther out
on the same ray.

Choose one state for each vertex of $K$, forming $V\subseteq S$,
and put $M=|S\cap I_{n,m}|$ and $e=n-h$.
Each shared row in $S$ can use at most two of these vertex
representatives, because its image lies on $\partial K$.
Each positive shift in the shared vector misses $V$ at only $e$
of the $n$ source labels. Counting incidences therefore gives
\[
 p(M-e)\le2M.
\]
For $p\ge2$, the bounds $M\ge N-b$ and $e\le n-N$ prove
$(p-2)(N-b)\le p(n-N)$. For $p=1$, this is just $b\le n$.

For the stated arc, $N=\max\{qd,s\}$, $r=n-N<q$ and $b<d$,
with $d\ge3$. Thus
\[
 N-b-3r\ge(d-3)(q-1)+1>0.
\]
If $p\ge3$, the general bound would give
$N-b\le p r/(p-2)\le3r$, a contradiction.
Finally, $\lambda\in\partial\Theta_n\cap\Om_{n,m}$ lies on
$\partial\Om_{n,m}$. Apply Proposition~\ref{prop:boundary-two-support}
while keeping the shared vector fixed.
\end{proof}

Fix $n,m$ and a number $\lambda$ satisfying the arc hypotheses of
Proposition~\ref{prop:shared-support}. We want to determine whether
$\lambda\in\Omega_{n,m}$, meaning that some stochastic matrix
$P\in\Q_{n,m}$ has eigenvalue $\lambda$. As in the criterion of
Proposition~\ref{thm:membership}, we use a common probability vector
$c$ and free boundary rows. Under the present hypotheses, each
can be restricted to at most two positive entries.

First choose two possible positions in $c$ and two possible columns
in each boundary row. Put $t_0$ and $1-t_0$ in the chosen positions
of $c$. For each $b\in B_{n,m}$, put $t_b$ and $1-t_b$ in the chosen
columns of row $b$. All other entries are zero, and the interior
rows are the prescribed cyclic shifts of $c$.
For each choice of positions, these probabilities determine a
candidate matrix $P\in\Q_{n,m}$ through at most $2m+1$ real
parameters: $t_0$ and the $2m$ numbers $t_b$, all in $[0,1]$.

We must find values of these parameters together with a vector
$v=(v_0,\ldots,v_{n-1})\in\mathbb C^n$, $v\ne0$, satisfying
$Pv=\lambda v$. The coordinates of $v$ are additional unknowns;
the count $2m+1$ concerns only the transition probabilities.
A solution for at least one choice of positions gives a matrix
in the family with the prescribed eigenvalue $\lambda$.

\begin{corollary}
\label{cor:effective-regions}
Fix $n,m$. If $\lambda$ has algebraic real and imaginary parts, we
can decide whether $\lambda\in\Omega_{n,m}$ by real quantifier
elimination. This procedure replaces the existence of unknown real
matrix entries by equivalent polynomial conditions on the real
and imaginary parts of $\lambda$.

If $n\ge4$ and $A$ is a fixed Karpelevi\v{c} boundary arc of
$\Theta_n$, then $A\cap\Omega_{n,m}$ is a finite union of closed
subarcs and isolated points. The endpoints of these subarcs and the
isolated points have algebraic real and imaginary parts and can
also be determined by real quantifier elimination.
\end{corollary}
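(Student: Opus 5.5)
The first claim follows by writing membership as a first-order formula over the reals, essentially \eqref{eq:polynomial-membership}. The second follows because $A\cap\Om_{n,m}$ is a compact semialgebraic subset of a semialgebraic curve.

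\emph{Deciding membership.} Write $\lambda=x+iy$. By Proposition~\ref{thm:membership}, $\lambda\in\Om_{n,m}$ if and only if there exist real unknowns satisfying polynomial conditions. The unknowns are the entries $c_k$ of the shared vector, the free boundary entries $P_{bj}$, and the real and imaginary parts of an eigenvector $v$, normalised by $\sum_i|v_i|^2=1$ so that $v\ne0$. The conditions are $c_k\ge0$, $P_{bj}\ge0$, unit row sums, and the real and imaginary parts of $Pv=\lambda v$.

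Equivalently, one can drop $v$ and use the two equations $\operatorname{Re}\det(\lambda I-P)=\operatorname{Im}\det(\lambda I-P)=0$. Either way the defining formula has rational coefficients in the matrix unknowns and in $x,y$. Real quantifier elimination \cite{BasuPollackRoy2006} then produces an equivalent quantifier-free formula $\Phi(x,y)$ with rational coefficients.

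When $x,y$ are real algebraic numbers given by defining polynomials and isolating intervals, deciding the truth of $\Phi(x,y)$ is itself decidable by sign determination of rational polynomials at algebraic points. Alternatively, one eliminates quantifiers in the closed sentence obtained by adjoining the defining polynomials of $x$ and $y$ together with their isolating conditions. This proves the first assertion.

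\emph{Structure of $A\cap\Om_{n,m}$.} The arc $A$ is semialgebraic: it is the closure of the selected branch of nonzero roots of \eqref{eq:ito} for $\alpha\in[0,1]$, and the branch selection can be expressed semialgebraically. Concretely, $A$ is the set of $z$ with $|z|=R_n$ in the angular sector between the two endpoints, and the radial function of $\Theta_n$ is semialgebraic. More simply, $A=\partial\Theta_n\cap\{\text{sector}\}$, and $\Theta_n$ is semialgebraic by the same quantifier-elimination argument used for Theorem~\ref{thm:radial} with $m=\floor{n/2}$.

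The set $\Om_{n,m}$ is compact and semialgebraic over $\mathbb Q$ by Theorem~\ref{thm:radial}. Hence $X=A\cap\Om_{n,m}$ is a compact semialgebraic subset of the arc $A$, which is homeomorphic to $[0,1]$ via a semialgebraic homeomorphism, for instance the angle parametrisation through rational parametrisations of the circle.

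A semialgebraic subset of $[0,1]$ is a finite union of points and intervals, and compactness makes the intervals closed. Transporting back gives finitely many closed subarcs and isolated points.

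\emph{Algebraicity of endpoints.} The endpoints and isolated points form the finite set of topological boundary points of $X$ relative to $A$. This set is a zero-dimensional semialgebraic set defined over $\mathbb Q$. Every point of a finite semialgebraic set defined over $\mathbb Q$ has real algebraic coordinates, since its projections to each coordinate are finite semialgebraic subsets of $\mathbb R$ defined over $\mathbb Q$. Cylindrical algebraic decomposition computes these points explicitly.

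\emph{Main obstacle.} The step needing the most care is showing that $A$, together with its endpoints, is semialgebraic over $\mathbb Q$ in a form usable by quantifier elimination. The planned route is to define $A$ as the part of $\partial\Theta_n$ in the closed sector between the two endpoint roots of unity. The set $\partial\Theta_n$ is semialgebraic because $\Theta_n=\Om_{n,\floor{n/2}}$, and the sector is cut out by rational linear inequalities in $x,y$ involving the algebraic numbers $\cos(2\pi p/q)$ and $\sin(2\pi p/q)$. This avoids tracking the Ito branch directly.
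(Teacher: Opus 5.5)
Your proposal is correct and follows essentially the paper's route. Membership is the existential statement \eqref{eq:polynomial-membership}, or its eigenvector variant, handled by real quantifier elimination; $A$ is cut out of $\partial\Theta_n$ by the closed sector between the endpoint rays, with $\Theta_n=\Omega_{n,\lfloor n/2\rfloor}$ semialgebraic by Theorem~\ref{thm:radial}, so $A\cap\Omega_{n,m}$ is a compact semialgebraic subset of an arc. Two small points of wording need fixing: the boundary of $X$ relative to $A$ misses the endpoints of $A$ itself (for instance when $X=A$), though these are roots of unity and hence algebraic anyway, and the sector has algebraic rather than rational coefficients, which is harmless because a finite semialgebraic set defined over the real algebraic numbers still has algebraic coordinates.
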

\begin{proof}
By definition, $\lambda=x+iy$ belongs to $\Omega_{n,m}$ precisely
when some stochastic matrix $P$ with the prescribed Toeplitz interior satisfies
$\det((x+iy)I-P)=0$. The restrictions on the entries of $P$, together
with the real and imaginary parts of this determinant equation, form
a finite system of polynomial equations and inequalities with rational
coefficients, as recorded in \eqref{eq:polynomial-membership}.
Algorithms in real algebraic geometry turn the question of whether
this system has a solution into equivalent polynomial conditions on
$x,y$ alone \cite{BasuPollackRoy2006}. This procedure is called real
quantifier elimination. For algebraic $x,y$, the resulting conditions
can be checked exactly.

For $n\ge4$, Theorem~\ref{thm:radial} describes
$\partial\Theta_n$ as a continuous radial curve semialgebraic
over $\mathbb Q$. The endpoints of $A$ are roots of unity,
so their real and imaginary parts are algebraic. The closed
angular sector between the rays from $0$ through these endpoints
can therefore be described by polynomial inequalities with
algebraic coefficients. Its intersection with $\partial\Theta_n$
is $A$. Consequently, $A\cap\Omega_{n,m}$ is a compact
semialgebraic subset of an arc, hence a finite union of closed
subarcs and isolated points.

The component endpoints and isolated points form a finite set
defined by polynomial conditions with algebraic coefficients.
Their coordinates are therefore algebraic and can be determined
by the same elimination procedure.
\end{proof}

\section{Local freedom of characteristic-polynomial coefficients}
\label{app:extensions}

We isolate the relation on two consecutive rows of a matrix.
Let $P$ be a real $n\times n$ matrix, $n\ge3$, whose rows sum to $1$,
with row $1$ obtained by shifting row $0$ one position to the right.
Apart from their row sums, rows $2,\ldots,n-1$ are free.
For even $n$, this is the two-interior-row pattern of
$\Q_{n,n/2-1}$ after a cyclic relabelling.
At this stage, negative entries are allowed; imposing
$P_{ij}\ge0$ will select the stochastic matrices.

Every such $P$ fixes the all-ones vector $\one$.
We identify two vectors when they differ by a multiple of $\one$
and study the induced map on the resulting $(n-1)$-dimensional
quotient space. This separates off one forced copy of the eigenvalue
$1$. We prove below that every linear map on this quotient comes
from exactly one real matrix satisfying the stated row constraints.
We use this correspondence to vary characteristic-polynomial
coefficients and to determine which stochastic matrices can be
approximated by strictly positive matrices with the same eigenvalues.
Note that repeated eigenvalues are allowed.

Write $[x]$ for the class of a vector $x$ modulo addition of
constant vectors, and put
\[
 V=\mathbb R^n/\operatorname{span}\{\one\},\qquad
 \mathcal L=\{P\in\mathbb R^{n\times n}:
 P\one=\one,\ P_{1,j+1}=P_{0j}\text{ for every }j\},
\]
with indices modulo $n$. Since $P\one=\one$, the rule
\[
 \overline P([x])=[Px]
\]
is independent of the representative $x$ and defines a linear map
on $V$. Write $\operatorname{End}(V)$ for the space of all linear
maps from $V$ to itself.

A \emph{row-sum-one lift} of $M\in\operatorname{End}(V)$ is a real
matrix $A$ with $A\one=\one$ and $[Ax]=M[x]$ for every $x$.
Such a lift always exists. Each class in $V$ has a unique
representative whose coordinates sum to zero: subtract the mean
of the coordinates from every coordinate. Identify $V$ with this
mean-zero subspace, let $A$ act there as $M$, and let $A$ fix $\one$.

This initial lift need not satisfy the relation between rows $0$
and $1$. To enforce it, set $d_j=A_{0j}-A_{1,j+1}$ and solve
\[
 w_{j+1}-w_j=d_j,\qquad \sum_jw_j=0.
\]
The two rows of $A$ each sum to $1$, so $\sum_jd_j=0$.
Starting with any $w_0$, the recurrence determines the other
coordinates and returns to $w_n=w_0$. Subtracting the mean of
these coordinates makes their sum zero without changing the
differences. Any two solutions of the difference equations differ
by a constant vector, so this mean-zero solution is unique.

Set $P=A+\one w^{\mathsf T}$. Adding $w^{\mathsf T}$ to every row
preserves row sums, and the difference equations give
$P_{1,j+1}=P_{0j}$. The added vector $\one(w^{\mathsf T}x)$ is
constant for every $x$, so $P$ still induces $M$ on the quotient.
Thus $P\in\mathcal L$ is a lift of $M$.

If two matrices in $\mathcal L$ induce the same map on $V$, their
difference has image in the constant vectors. It therefore equals
$\one u^{\mathsf T}$ with $\sum_ju_j=0$. The row relation forces
$u_{j+1}=u_j$, so $u$ is constant and hence zero.
Consequently $P\mapsto\overline P$ is an affine isomorphism from
$\mathcal L$ onto $\operatorname{End}(V)$; denote its inverse by $L$.

Writing $\chi_A(z)=\det(zI-A)$ for a square matrix $A$, choose a
basis of $\mathbb R^n$ beginning with $\one$. In this basis,
$P=L(M)$ is block triangular, with diagonal blocks $1$ and a matrix
representing $M$.
Hence
\[
 \chi_{L(M)}(z)=(z-1)\chi_M(z).
\]
A row vector defines a functional on $V$ exactly when its entries
sum to zero. Let $\ell[x]=x_0-x_1$, represented by
$e_0^{\mathsf T}-e_1^{\mathsf T}$, and put $r=\ell M$.
For $P=L(M)$, $r$ is represented by the difference
of rows $0$ and $1$ of $P$. If $p$ is the first row, the right-shift
relation gives
\[
 r_j=p_j-p_{j-1},\qquad \sum_jp_j=1.
\]
These equations uniquely determine $p$.

\begin{lemma}
\label{lem:coefficient-openness}
For each $d$, the map sending a real $d\times d$ matrix to the
ordered coefficients of its monic characteristic polynomial is open.
Thus every neighbourhood of a matrix $M$ contains matrices whose
characteristic polynomials cover a neighbourhood of $\chi_M$
among real monic polynomials of degree $d$.
\end{lemma}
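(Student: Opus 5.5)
The map $\chi\colon\mathbb R^{d\times d}\to\mathbb R^d$, sending $M$ to the coefficients $(c_{d-1},\ldots,c_0)$ of $\det(zI-M)$, is polynomial. Openness therefore follows once we show that, at every $M$, the image of each neighbourhood contains a neighbourhood of $\chi_M$. The plan avoids needing a full-rank differential at $M$ itself, which fails for derogatory matrices such as the identity. Instead we move to a nearby matrix with a full-rank differential and invoke the local submersion theorem there.

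\emph{Step 1: companion matrices are regular points.} Let $F(a)$ denote the companion matrix of the monic polynomial with coefficient vector $a$. Then $\chi(F(a))=a$ for every $a$, so $\chi\circ F$ is the identity on $\mathbb R^d$. Hence $d\chi$ is surjective at every companion matrix. Conjugation preserves $\chi$, so $d\chi$ is also surjective at every nonderogatory real matrix, since each is similar over $\mathbb R$ to its companion matrix.

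\emph{Step 2: approximate $M$ by a nonderogatory matrix.} Given a neighbourhood $W$ of $M$, put $M'=M+\varepsilon N$ with $N$ a real nilpotent Jordan block in a basis adapted to $M$. A cleaner route is to use real Schur (block upper triangular) form: perturb the diagonal blocks so that all eigenvalues become distinct. Any real matrix with $d$ distinct eigenvalues is nonderogatory, and distinct-eigenvalue matrices are dense. So we can choose $M'\in W$ with $\chi_{M'}$ arbitrarily close to $\chi_M$. This alone does not suffice, because we need to cover a neighbourhood of $\chi_M$ itself, not of $\chi_{M'}$.

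\emph{Step 3 (main obstacle): obtain uniformity.} Work with the family $M_t=M+tX$ for a fixed $X$ chosen so that $M+tX$ is nonderogatory for small $t\neq0$. Such an $X$ exists: take $M$ in real Jordan form and add a superdiagonal chain joining all blocks, plus shifts separating repeated eigenvalues. The submersion radius at $M_t$ may shrink as $t\to0$, so one local chart will not do. I would instead use a dimension and degree argument. The image $\chi(W)$ is semialgebraic of full dimension near $\chi_M$. To show that it contains $\chi_M$ in its interior, I would use a direct construction. Choose a basis with $M=\operatorname{diag}(B_1,\ldots,B_k)$, where the blocks are companion matrices. This is possible by the rational canonical form over $\mathbb R$: $M$ is similar to a direct sum of companion matrices $F(a^{(1)})\oplus\cdots\oplus F(a^{(k)})$. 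Now perturb only the coefficients of each block. The product map $(a^{(1)},\ldots,a^{(k)})\mapsto\prod_j p_{a^{(j)}}$ is polynomial multiplication. Its image near $\chi_M$ covers only products of polynomials near the factors, which is again not a neighbourhood when factors share roots.

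So I would add the connecting entries $\varepsilon E$, which link the last row of each block to the first column of the next, with $\varepsilon\neq0$ free. With $\varepsilon$ as an extra parameter, the perturbed matrix $F(a^{(1)}+\cdot)\oplus\cdots$ plus $\varepsilon$-links is itself similar to a single companion matrix whose coefficients are $\prod p_j$ plus $\varepsilon$ times a correction. The point to verify is a one-parameter-family version of the inverse function theorem. For the full $d$-parameter family $a\mapsto\chi(S^{-1}F(a)S)$ with a fixed $S$, $\chi$ is simply $a$. So I would pick one fixed invertible $S_\varepsilon$ with $S_\varepsilon^{-1}F(\chi_M)S_\varepsilon$ within distance $\eta$ of $M$. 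Then $a\mapsto S_\varepsilon^{-1}F(a)S_\varepsilon$ stays in $W$ for all $a$ near $\chi_M$, and its characteristic polynomial is exactly $a$, which covers a full neighbourhood of $\chi_M$.

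Finding $S_\varepsilon$ is the real obstacle. It amounts to showing that $M$ lies in the closure of the similarity orbit of the companion matrix $F(\chi_M)$, the regular orbit with the same characteristic polynomial. I expect to prove this by taking $M$ in real Jordan form and conjugating the companion/Jordan form of $F(\chi_M)$ by $\operatorname{diag}(1,\varepsilon^{-1},\varepsilon^{-2},\ldots)$ within each Jordan chain. This scales the superdiagonal ones to $\varepsilon$, so the limit is the derogatory Jordan form of $M$ obtained by splitting chains, handled blockwise for each eigenvalue and each complex pair.
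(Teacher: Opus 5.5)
Your final route is correct and is essentially the paper's proof. The paper also joins Jordan blocks of equal eigenvalues (real $2\times2$ blocks for conjugate pairs) by arbitrarily small entries to get a cyclic $M'$ near $M$ with $\chi_{M'}=\chi_M$, then covers a neighbourhood of $\chi_M$ by the rank-one affine family $M'+uw$. Your family $a\mapsto S_\varepsilon^{-1}F(a)S_\varepsilon$ is the same kind of slice, with $\chi$ identically equal to $a$, so you can drop Step 2 and the other detours.

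One detail in your orbit-closure sketch needs fixing. The diagonal scaling must be constant along each Jordan chain of $M$ and change by a factor $\varepsilon$ only between consecutive chains of the same eigenvalue, so that only the linking superdiagonal entries become $\varepsilon$. Scaling every superdiagonal one to $\varepsilon$ would converge to the semisimple part of $M$, not to $M$.
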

\begin{proof}
If $u$ is cyclic for a $d\times d$ matrix $M$, then
$u,Mu,\ldots,M^{d-1}u$ is a basis and
\[
 \chi_{M+uw}(z)=\chi_M(z)-w\operatorname{adj}(zI-M)u
\]
for every row vector $w$.
Consider the linear map $w\mapsto w\operatorname{adj}(zI-M)u$
into polynomials of degree at most $d-1$.
The \emph{resolvent} of $M$ is the inverse $(zI-M)^{-1}$, defined
when $z$ is not an eigenvalue. For sufficiently large $|z|$,
\[
 (zI-M)^{-1}=\sum_{k=0}^{\infty}\frac{M^k}{z^{k+1}}.
\]
If $w\operatorname{adj}(zI-M)u$ is the zero polynomial, dividing
by $\chi_M(z)$ and using this expansion gives $wM^ku=0$ for
$0\le k<d$. The displayed basis then gives $w=0$.
Both the space of row vectors and the polynomial space have
dimension $d$, so this linear map is invertible. All nearby monic
polynomials therefore occur near $M$.

For arbitrary $M$, joining Jordan blocks belonging to the same
eigenvalue by arbitrarily small nonzero entries gives cyclic
isospectral matrices nearby. Use real $2\times2$ blocks for
conjugate eigenvalues. Every neighbourhood of $M$ thus contains
a cyclic matrix with polynomial $\chi_M$ and realises a
neighbourhood of that polynomial.
\end{proof}

\begin{proposition}
\label{prop:two-row-submersion}
\label{prop:free-row-positive}
Let $n\ge3$ and let $P\in\mathcal L$ be stochastic, with every
entry in rows $2,\ldots,n-1$ strictly positive. If its first row differs from $e_0^{\mathsf T}$, it is a limit of strictly positive
matrices in $\mathcal L$ with the same characteristic polynomial,
and every neighbourhood of $P$ contains stochastic matrices in
$\mathcal L$ realising all sufficiently nearby real monic degree-$n$
polynomials that vanish at $1$.
If that row is $e_0^{\mathsf T}$, its polynomial lies on the
boundary of the attainable coefficient set. Under the stated
positivity assumption, this case occurs exactly when $1$ is repeated.
\end{proposition}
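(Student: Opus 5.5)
The plan is to work entirely through the affine isomorphism $L:\operatorname{End}(V)\to\mathcal L$ established just before the statement, together with $\chi_{L(M)}(z)=(z-1)\chi_M(z)$. Put $M=\overline P$. Both $L$ and its inverse are continuous and affine, so matrices near $P$ in $\mathcal L$ correspond to maps near $M$. Applying Lemma~\ref{lem:coefficient-openness} on $V$ (dimension $n-1$), every neighbourhood of $M$ realises a neighbourhood of $\chi_M$. Hence every neighbourhood of $P$ in $\mathcal L$ realises all nearby monic polynomials of the form $(z-1)g$, and these are exactly the nearby monic degree-$n$ polynomials vanishing at $1$.

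The only issue is nonnegativity. Rows $2,\ldots,n-1$ stay strictly positive under small perturbations. Rows $0$ and $1$ are determined by $p$, since row $1$ is its shift, and zero entries of $p$ could turn negative. So the second assertion reduces to the first: once $P$ is approximated by strictly positive matrices in $\mathcal L$ with the same polynomial, we apply the openness argument at such a positive matrix.

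For the first assertion, I would move $M$ within its similarity orbit $M\mapsto SMS^{-1}$, with $S$ invertible on $V$. This preserves $\chi$, and $L(SMS^{-1})\in\mathcal L$ automatically. Only the functional $r=\ell M$ needs to be controlled, because it determines $p$ through $p_j-p_{j-1}=r_j$ and $\sum_j p_j=1$. The derivative of $S\mapsto\ell SMS^{-1}$ at $S=I$ in a direction $X$ is $\ell XM-rX$. Take $X=x\varphi$ with $x\in V$ and $\varphi$ a functional. Then the derivative is $(\ell x)\varphi M-(rx)\varphi$. If $r$ and $\ell$ are linearly independent, choose $x$ with $\ell x=1$ and $rx=t$, where $t$ is not an eigenvalue of $M$. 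The derivative is then $\varphi(M-tI)$, which ranges over all functionals. By the submersion theorem, $r$, and hence $p$, can be moved in any direction while the characteristic polynomial stays fixed. I would move $p$ in the direction $\tfrac1n\one^{\mathsf T}-p$. To first order this makes every zero entry positive and keeps every positive entry positive. Rows $2,\ldots,n-1$ remain positive for small moves, so the resulting matrices are strictly positive and tend to $P$.

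The main obstacle is the degenerate case $r=c\ell$. Then $p=a\one^{\mathsf T}+ce_0^{\mathsf T}$ with $na+c=1$, and $c=1$ is exactly $p=e_0^{\mathsf T}$. For $c\ne1$ the general derivative is $\psi(M-cI)$ with $\psi=\ell X$ arbitrary. Since $\ell$ is then a left eigenvector of $M$ for $c$, this image misses only one direction. I would first make a small isospectral move in an admissible direction that leaves the line $r\in\mathbb R\ell$, and then apply the previous paragraph. When $c<0$ the only zero entry is possibly $p_0$, and $a>0$, so only that single entry has to be pushed up. I expect checking that this one direction is available to be the delicate point. Where $p=e_0^{\mathsf T}$ the derivative image is $\varphi(M-I)$, which is not surjective, and this is precisely what the boundary statement reflects.

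For the last assertions, suppose the first row is $e_0^{\mathsf T}$. Then row $1$ is $e_1^{\mathsf T}$, so states $0$ and $1$ are absorbing and $1$ has multiplicity at least $2$. Write $\chi_P=(z-1)^2g$. The polynomials $(z-1)(z-1-\varepsilon)g$ with $\varepsilon>0$ are arbitrarily close to $\chi_P$ and have a real root larger than $1$. No stochastic matrix has such a root, so $\chi_P$ lies on the boundary of the attainable set. Conversely, suppose $p\ne e_0^{\mathsf T}$. The positive rows make $\{2,\ldots,n-1\}$ lie inside one communicating class that reaches every state. State $0$ has a transition to some state other than itself. If that transition goes to $1$ (the case $p=e_1^{\mathsf T}$), then row $1$ is $e_2^{\mathsf T}$. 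Either way, states $0$ and $1$ are not absorbing and reach the main class. Hence there is a unique closed class, and $1$ is a simple eigenvalue. So, under the positivity assumption, $p=e_0^{\mathsf T}$ occurs exactly when $1$ is repeated.
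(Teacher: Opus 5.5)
Your reduction of the openness claim to a strictly positive isospectral approximant is sound. So are the independent case and your treatment of the absorbing case and of the simplicity of $1$. In the independent case your submersion argument is a valid substitute for the paper's explicit similarity $S=I-tar$ with $\ell a=0$, $ra=1$. That similarity gives $\ell S^{-1}MS=(1-t)r$, and hence first row $(1-t)p+t\one^{\mathsf T}/n$, exactly for every $t\in[0,1)$.

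The gap is the degenerate case $r=\lambda_*\ell$ with $\lambda_*=-1/(n-1)$. There $p_0=0$ and $p_j=1/(n-1)$ for $j\ne0$, so $P_{00}=P_{11}=0$. This is the only degenerate case that needs work: for $\lambda_*<c<1$ the matrix is already strictly positive. (Your plan of first leaving the line $r\in\mathbb R\ell$ would in fact fail for $P=cI+(1-c)J$, whose similarity orbit is a single point.) You stop at exactly this case. Your claim that the image $\{\psi(M-cI)\}$ misses only one direction is also false in general: its codimension is $\dim\ker(M-cI)$, and it is $\{0\}$ when $M=cI$.

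More importantly, this is where the positivity of rows $2,\ldots,n-1$ must be used, and your proposal never uses it. Without that hypothesis the conclusion fails. The matrix $L(\lambda_*I)=\lambda_*I+(1-\lambda_*)J=(\one\one^{\mathsf T}-I)/(n-1)$ is stochastic, lies in $\mathcal L$, and has first row different from $e_0^{\mathsf T}$. Yet its trace is $0$, so no strictly positive matrix has its characteristic polynomial.

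The paper closes this case by raising $p_0$ alone, which suffices and does not require leaving the line.
\begin{itemize}
\item Write $p_0=1/n+(\ell M)g$, where $g$ is represented by $(0,-(n-1)/n,\ldots,-1/n)^{\mathsf T}$ and $\ell g=(n-1)/n$.
\item Along $M(t)=(I+tH)^{-1}M(I+tH)$ one gets $p_0'(0)=\ell H(\lambda_*I-M)g$. Some $H$ makes this positive whenever $(M-\lambda_*I)g\ne0$.
\item If instead $Mg=\lambda_*g$, first conjugate by $T_s=I+sa\ell/(\ell g)$ with $a\in\ker\ell$ and $(M-\lambda_*I)a\ne0$. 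This keeps $\ell M=\lambda_*\ell$, hence both constrained rows, and destroys the relation $Mg=\lambda_*g$.
\item Such an $a$ exists unless $M=\lambda_*I$. That case is excluded because $L(\lambda_*I)$ has zero diagonal entries in the free rows.
\end{itemize}

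If you prefer to stay within your first-order framework, the hypothesis gives $\operatorname{tr}P>0$, that is $\operatorname{tr}M>(n-1)\lambda_*$. This rules out exactly the two configurations in which $\{\psi(M-\lambda_*I)\}\subseteq\mathbb R\ell$, namely $M=\lambda_*I$ and $M=\lambda_*I+y\ell$ with $\ell y=0$. You would still need quantitative control afterwards. First, fix the sign of the change in $p_0$, which may turn slightly negative at second order. Second, control the subsequent contraction step, because your local surjectivity is not uniform as $r$ approaches $\mathbb R\ell$: the vector $x$ with $\ell x=1$, $rx=t$ blows up there.
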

\begin{proof}
Put $M=\overline P$. If $r=\ell M$ and $\ell$ are independent,
choose $a\in V$ with $\ell a=0$ and $ra=1$. For small $t>0$,
$S=I-ta r$ is invertible and $\ell S=\ell$. Hence
\[
 \ell(S^{-1}MS)=rS=(1-t)r.
\]
Its lift has first row
$(1-t)p+t\one^{\mathsf T}/n$, so both constrained rows are positive.
The other rows remain positive by continuity.

If $r=\lambda\ell$, reconstruction gives
\[
 p_0=\frac{1+(n-1)\lambda}{n},\qquad
 p_j=\frac{1-\lambda}{n}\quad(j\ne0).
\]
For $-1/(n-1)<\lambda<1$, the matrix is already positive.
The endpoint $\lambda=1$ makes both constrained states absorbing.
At $\lambda_*=-1/(n-1)$, only $P_{00}$ and $P_{11}$ are zero.

We now track $p_0$ under changes of $M$ by writing it as $1/n$
plus the value of the row $\ell M$ on a fixed vector $g$.
Let $g\in V$ be represented by
$(0,-(n-1)/n,\ldots,-1/n)^{\mathsf T}$. The reconstruction formula
gives $p_0=1/n+(\ell M)g$ and $\ell g=(n-1)/n$.
If $(\lambda_*I-M)g\ne0$, choose $H$ so that
$\ell H(\lambda_*I-M)g>0$. Along
$M(t)=(I+tH)^{-1}M(I+tH)$, the lifted diagonal entry satisfies
\[
 p_0'(0)=\ell H(\lambda_*I-M)g>0.
\]
Both zero entries become positive, and all other entries stay positive.

If $Mg=\lambda_*g$, choose $a\in\ker\ell$ with
$(M-\lambda_*I)a\ne0$. Such an $a$ exists unless $M=\lambda_*I$,
since $V=\operatorname{span}\{g\}\oplus\ker\ell$.
For $T_s=I+sa\ell/(\ell g)$ and $N=T_s^{-1}MT_s$,
\[
 \ell N=\lambda_*\ell,\qquad
 (N-\lambda_*I)g=s(M-\lambda_*I)a\ne0.
\]
The lift keeps both constrained rows fixed, so the preceding
argument applies arbitrarily close to $P$.
The scalar exception cannot occur:
$L(\lambda_*I)=\lambda_*I+(1-\lambda_*)J$ has zero diagonal
entries in the free rows.

Every neighbourhood of $P$ now contains a positive isospectral
lift. The affine isomorphism and Lemma~\ref{lem:coefficient-openness}
show that this neighbourhood realises a neighbourhood of $\chi_P$
among monic polynomials vanishing at $1$.

Finally, a proper closed class must lie in $\{0,1\}$, since every
free row is positive. The cyclic row relation permits such a class
only for $p=e_0^{\mathsf T}$. Otherwise $P$ is irreducible and $1$
is simple. In the absorbing case, $1$ is repeated; moving one copy
to $1+\varepsilon$ gives arbitrarily close unattainable polynomials.
Since the coefficient image of the compact stochastic part of
$\mathcal L$ is closed, $\chi_P$ lies on its boundary.
\end{proof}

\section*{Use of AI tools}
The authors wanted to gain more understanding about how eigenvalue
regions change. Discussions with AI tools led to the idea of using
a Toeplitz interior. The main mathematical ideas originated with
the authors and developed through one author's presentation and
other talks and discussions at the ILAS 2026 meeting.
The manuscript
was developed through collaboration between the authors and AI tools.
Discussions with GPT-6 Astra helped develop arguments and close gaps in
the proofs. OpenAI Codex, using GPT-6.1 Sol, assisted with rewriting the
exposition, preparing the figures, and writing and running code for
numerical checks. The authors take full responsibility for the
mathematical claims, proofs, computations, figures, and final text.

\end{document}